\definecolor{darkblue}{rgb}{0.0, 0.0, 0.55}
\definecolor{bordeaux}{rgb}{0.34, 0.01, 0.1}
 \newtheorem{theorem}{Theorem}[section]
 \newtheorem{proposition}[theorem]{Proposition}
 \newtheorem{conjecture}[theorem]{Conjecture}
 \newtheorem{definition}[theorem]{Definition}{\rm}
 \newtheorem{remark}[theorem]{Remark}
\theoremstyle{definition}
 \newtheorem{example}[theorem]{Example}{\rm}
\numberwithin{equation}{section}
\DeclareMathOperator{\Sym}{Sym}
\DeclareMathOperator{\Trace}{tr}
\newif\ifcomment
\begin{document}
\def\cA{\mathcal A}
\def\cH{\mathcal H}
\def\red{\color{red}}
\def\bl{\color{blue}}
\def\ora{\color{orange}}
\def\green{\color{green}}
\def\br{\color{brown}}
\newcommand{\realtofloat}{\mathtt{Real2Float}}
\newcommand{\sparsepop}{\mathtt{SparsePOP}}
\newcommand{\ncsostools}{\mathtt{NCSOStools}}
\newcommand{\ncpoltosdpa}{\mathtt{Ncpol2sdpa}}
\newcommand{\gloptipoly}{\mathtt{Gloptipoly}}
\def\s{\mathbb{s}}
\def\la{\langle}
\def\ra{\rangle}
\def\e{{\rm e}}
\def\x{\mathbf{x}}
\def\by{\mathbf{y}}
\def\bz{\mathbf{z}}
\def\F{\mathcal{F}}
\def\R{\mathbb{R}}
\def\Mbb{\mathbb{M}}
\def\Sbb{\mathbb{S}}
\def\T{\mathbb{T}}
\def\N{\mathbb{N}}
\def\K{\mathbb{K}}
\def\bK{\overline{\mathbf{K}}}
\def\Q{\mathbf{Q}}
\def\M{\mathbf{M}}
\def\O{\mathbf{O}}
\def\C{\mathbb{C}}
\def\P{\mathbf{P}}
\def\Z{\mathbb{Z}}
\def\H{\mathcal{H}}
\def\A{\mathbf{A}}
\def\W{\mathbf{W}}
\def\bfone{\mathbf{1}}
\def\V{\mathbf{V}}
\def\AA{\overline{\mathbf{A}}}
\def\L{\mathbf{L}}
\def\bS{\mathbf{S}}
\def\H{\mathcal{H}}
\def\Bbb{\mathbb{B}}
\def\Dbb{\mathbb{D}}
\def\d{\hat{d}}
\def\b{\mathcal{B}}
\def\cc{\mathcal{C}}
\def\co{{\rm co}\,}
\def\cp{{\rm CP}}
\def\tg{\tilde{f}}
\def\tx{\tilde{\x}}
\def\supmu{{\rm supp}\,\mu}
\def\supnu{{\rm supp}\,\nu}
\def\m{\mathcal{M}}
\def\k{\mathcal{K}}
\def\la{\langle}
\def\ra{\rangle}
\def\opt{\text{opt}}
\def\cyc{\overset{\text{cyc}}{\sim}}
\def\smileL{\overset{\smallsmile}{L}}
\def\blambda{{\boldsymbol{\lambda}}}
\def\bsigma{{\boldsymbol{\sigma}}}
\def\RX{\R \langle \underline{X} \rangle}
\def\CX{\C \langle \underline{X} \rangle}
\def\TX{\T \langle \underline{X} \rangle}
\def\KX{\K \langle \underline{X} \rangle}
\def\uX{\underline X}
\def\uY{\underline Y}
\def\uA{\underline A}
\def\uB{\underline B}
\newcommand{\RXI}[1]{\R \langle \underline{X}(I_{#1}) \rangle }
\def\SigmaX{\Sigma \langle \underline{X} \rangle}
\newcommand{\SigmaXI}[1]{\Sigma \langle \underline{X}(I_{#1}) \rangle }
\def\SymRX{\Sym \RX}
\def\SymCX{\Sym \CX}
\def\SymTX{\Sym \TX}
\def\SymT{\Sym \T}
\def\SymKX{\Sym \KX}
\def\ov{\overline{o}}
\def\und{\underline{o}}
\newcommand{\victorm}[1]{\todo[inline,color=purple!30]{VM: #1}}
\newcommand{\victormshort}[1]{\todo[inline,color=purple!30]{VM: #1}}
\newcommand{\victorv}[1]{\Ig{#1}}
\newcommand{\victorvshort}[1]{\todo[color=brown!30]{IK: #1}}
\newcommand{\serban}[1]{{\color{red} Janez: #1}}
\newcommand{\serbanshort}[1]{\todo[color=red!30]{TdW: #1}}

\colorlet{commentcolour}{green!50!black}
\newcommand{\comment}[3]{%
\ifcomment%
	{\color{#1}\bfseries\sffamily(#3)%
	}%
	\marginpar{\textcolor{#1}{\hspace{3em}\bfseries\sffamily #2}}%
	\else%
	\fi%
}
\newcommand{\ViM}[1]{
	\comment{magenta}{I}{#1}
}
\newcommand{\ViV}[1]{
	\comment{green}{J}{#1}
}
\newcommand{\Se}[1]{
	\comment{blue}{V}{#1}
}
\newcommand{\idea}[1]{\textcolor{red}{#1(?)}}

\newcommand{\Expl}[1]{
	{\tag*{\text{\small{\color{commentcolour}#1}}}%
	}
}

\newcommand{\sparsegns}{\texttt{SparseGNS}}
\newcommand{\sparseeiggns}{\texttt{SparseEigGNS}}
\newcommand{\eigmin}{\texttt{NCeigMin}}
\newcommand{\eigminsparse}{\texttt{NCeigMinSparse}}
\def\nsdp{n_{\text{sdp}}}
\def\msdp{m_{\text{sdp}}}

\title[Noncommutative Christoffel-Darboux Kernels]{Noncommutative Christoffel-Darboux Kernels}

\author{Serban T. Belinschi \and Victor Magron \and Victor Vinnikov}

\date{}

\begin{abstract}
We introduce from an analytic perspective Christoffel-Darboux kernels associated to bounded, tracial noncommutative distributions.
We show that properly normalized traces, respectively norms, of evaluations of such kernels on finite dimensional matrices
yield classical plurisubharmonic functions as the degree tends to infinity, and show that they are comparable to certain noncommutative 
versions of the Siciak extremal function. We prove estimates for Siciak functions associated to free products of distributions, and
use the classical theory of plurisubharmonic functions in order to propose a notion of support for noncommutative distributions.
We conclude with some conjectures and numerical experiments.
\end{abstract}

\keywords{noncommutative polynomials; Christoffel-Darboux kernels;  semialgebraic set;  Bernstein-Markov property; semidefinite programming; trace optimization; GNS construction}

\subjclass[2010]{90C22; 47N10; 13J10}

\maketitle

\section{Introduction}
\label{sec:intro}
The goal of this paper is to study the  noncommutative analog of {\em Christoffel-Darboux kernel} associated to a certain class of distributions occuring 
in free probability, and to investigate the related asymptotic properties.
In the classical commutative setting, one considers a finite signed Borel measure $\mu$ on $\mathbb R^n$. Integrating with respect to $\mu$ defines
an inner product on the space of polynomials. The  Christoffel-Darboux kernel \cite{simon2008christoffel} is the reproducing  kernel associated to the 
Hilbert space containing all polynomials up to a given degree. One way to compute this kernel is to rely on the elements of the orthonormal basis of 
this Hilbert space \cite{dunkl2014orthogonal}.

One appealing feature of the Christoffel-Darboux kernel and the related Christoffel function is their ability to capture some properties of $\mu$, such as 
the support or the density of its absolutely continuous component, from the only a priori knowledge of its moments. We refer the interested reader to 
\cite{mate1980bernstein,mate1991szego} for the univariate case. Recent research efforts \cite{lasserre2019empirical} have focused on the multivariate
case. When the measure $\mu$ is uniform or empirical and when the support of $\mu$ satisfies specific compactness conditions, the sequence of level 
sets of the Christoffel function associated to $\mu$ converges to its support with respect to the Hausdorff distance.
The rate of convergence for estimating the support of a measure from a finite, independent,
sample based on such empirical Christoffel-Darboux kernels is analyzed in \cite{vu2019rate}.
Current applications of the Christoffel function include  statistical leverage scores \cite{pauwels2018relating}, sorting out typicality  
\cite{pauwels2016sorting,lasserre2019empirical}, and detection of outliers \cite{beckermann2020perturbations}. 
An extension to the case of certain singular measures is addressed in \cite{pauwels2020data}, for instance when $\mu$ is the Hausdorff measure 
supported on the unit sphere. The framework from \cite{invsdp} allows one to approximate attractors of dynamical systems, while relying on 
Christoffel-Darboux kernels associated to the moment matrix of the measure which is invariant with respect to the dynamics.
Further applications \cite{marx2019tractable} consider approximations of (possibly discontinuous) functions  arising from weak (or measure-valued) 
solutions of optimal control problems or entropy solutions to non-linear hyperbolic PDEs. 
Note that in the two latter cases, the measure $\mu$ can be singular continuous. 

One important motivation for the use of Christoffel-Darboux kernels is to extract the support of measures arising, e.g., in the above-mentioned dynamical 
systems or in a polynomial optimization problem (POP). In the latter case, one minimizes a polynomial over the intersection of finitely many level sets of 
polynomials, i.e., over a basic closed \emph{semialgebraic set}. The minimizers of the POP belong to the support of atomic measures. 
Solving this problem is NP-hard in general~\cite{Laurent:Survey}. {\em Lasserre's hierarchy}~\cite{Las01sos} is a  well established framework to 
approximate the value of POPs. This methodology consists of approximating the optimal value of the initial POP by considering a hierarchy of semidefinite 
programs \cite{anjos2011handbook}, involving moment matrices of growing sizes. By Putinar's Positivstellensatz \cite{Putinar1993positive}, if the 
quadratic module generated by the polynomials describing the semialgebraic set is archimedean, the hierarchy of semidefinite bounds converges from 
below to the minimum of the polynomial over this  semialgebraic set. A somehow more delicate problem is to compute or at least approximate the 
minimizers of the POP. For this, one can extract the support of the atomic measure thanks to \cite{henrion2005detecting}, which provides a numerical 
algorithm based on linear algebra. However, this procedure can be applied only if finite convergence occurs and assuming that the resulting obtained 
moment matrix is flat \cite{curto1998flat}. 
 
In the free noncommutative context, one can use \emph{sum of hermitian squares} decompositions of positive polynomials \cite{Helton02,McCullSOS} 
to perform  eigenvalue optimization of noncommutative polynomials over noncommutative semialgebraic sets. The noncommutative analogue of Lasserre's 
hierarchy \cite{Helton04,navascues2008convergent,pironio2010convergent,cafuta2012constrained,nctrace} allow one to approximate as closely as desired 
the optimal value of such eigenvalue minimization problems. Further efforts  \cite{pironio2010convergent,cafuta2012constrained,nctrace} have been 
pursued to derive a hierarchy of semidefinite relaxations to optimize the trace of a given polynomial under positivity constraints.
Sparsity exploiting hierarchies \cite{klep2021sparse,wang2020exploiting} allow one to reduce the associated computational burden. 
The case of more general trace polynomials has been investigated in \cite{tracencpop}. 
Algorithms similar to the one from 
\cite{henrion2005detecting} allow one to extract optimizers of eigenvalue or trace minimization problems; see, e.g., 
\cite{pironio2010convergent},~\cite[Chapter 21]{anjos2011handbook},~\cite[Theorem~1.69]{burgdorf16} and~\cite{nctrace}.
As for the commutative case, when the extraction procedure fails (i.e.,~without flatness of the associated moment matrix), one still 
hopes to approximate such optimizers by considering them as elements in the support of a tracial noncommutative distribution. 
This support would be approximated by computing the levelsets of the noncommutative Christoffel-Darboux kernel associated to this distribution. 

One of the equivalent definitions of the Christoffel-Darboux kernel is via a sum of orthonormal polynomials. This has been done before by 
Constantinescu \cite{OP1} in the noncommutative context. Further explorations of systems of multivariate orthogonal and orthonormal 
noncommutative polynomials have been undertaken by several authors, for instance \cite{A1,A2,A3,OP2}. We bring as novelty to this 
study the structure of operator spaces \cite{Vern} and the application of the classical analysis of plurisubharmonic functions \cite{GZ} --
which we believe has never been considered before in the noncommutative context.

\section{Preliminaries}
\label{sec:prelim}
\subsection{Hermitian matrices, words and noncommutative polynomials}
Given $k,n\in\mathbb N$, let us denote by $\mathbb M_k(\mathbb C)$ (resp.~$\Sbb_k$) the space of all complex (resp.~hermitian) matrices of 
order $k$, and by $\Sbb_k^n$ the set of $n$-tuples $\underline{A} = (A_1,\dots,A_n)$ of hermitian matrices $A_i$ of order $k$. Let $I_k\in\mathbb M_k(\mathbb C)$ stand  
for the identity matrix. 

For a fixed $n\in\mathbb N$, we consider a finite alphabet $\{X_1,\dots,X_n\}$ and denote by $\langle\underline{X}\rangle$ the set of
all possible words of finite length formed with the letters $X_1,\dots,X_n$. The empty word is denoted by 1. The length of a word $w$ is defined to be
the number of letters (counted with repetition) that form $w$, and is denoted by $|w|$ (for example, $|X_1^5X_2X_1|=7$). 
The length of the empty word is zero.  For $d\in\mathbb N$, $\langle\underline{X}\rangle_d$ is the 
subset of all words of length at most $d$.  We endow $\langle\underline{X}\rangle$ with the graded lexicographic order $\leq_{\text{gl}}$, 
and its strict version $<_{\text{gl}}$ (that is, $w<_{\text{gl}}w'$ iff $w\leq_{\text{gl}}w'$ and $w\neq w'$).
The graded lexicographic order is defined as follows: first, $X_1 <_{\text{gl}}X_2 <_{\text{gl}}\cdots <_{\text{gl}}X_n$. Second, if $w,w'\in\langle
\underline{X}\rangle$ are such that $|w| < |w'|$, then $w <_{\text{gl}} w'$. Third, if $|w|=|w'|$, then one applies the usual lexicographic order: if the first letter of
$w$ is less than the first letter of $w'$, then $w <_{\text{gl}} w'$; if the first letters of $w$ and $w'$ coincide, one goes on to compare the second letters of $w$ and $w'$, and so on.
For example, $X_n<_{\text{gl}}X^2_1<_{\text{gl}}X_1X_2<_{\text{gl}}X_2X_1<_{\text{gl}}X_2^2<_{\text{gl}}X_1^7<_{\text{gl}}X_1^6X_n<_{\text{gl}}
X_1^5X_nX_1$. 

The set $\langle\uX\rangle$ is in fact a monoid, known as the free monoid with $n$ generators, where the multiplication is the juxtaposition of words and
the neutral element is the empty word 1. Thus, the complex vector space spanned by it, which we denote by $\mathbb C\langle\underline{X}\rangle$, 
has a complex algebra structure. This algebra is known also as the algebra of polynomials in noncommutative indeterminates $\underline{X}=(X_1,\dots,X_n)$
(that is, we view the letters as indeterminates). We denote by $\mathbf W_d(\uX)$ the vector of all words of 
$\langle\uX\rangle_d$ ordered w.r.t. $\leq_{\text{gl}}$. The dimension of $\mathbb C\langle\underline{X}\rangle_d$ equals the length of $\mathbf W_d(\underline{X})$, which is 
$\bsigma(n,d):=\sum_{i=0}^dn^i=\frac{n^{d+1}-1}{n-1}$. We equip the set $\mathbb C\langle\uX\rangle$ with the involution $\star$ that conjugates 
the elements of $\mathbb C$, fixes $\{X_1,\dots,X_n\}$ pointwise and reverses words, in such a way that $\mathbb C\langle\uX\rangle$ is the 
$\star$-algebra freely generated by $n$ self-adjoint letters $X_1,\dots,X_n$. 
We will systematically identify the set of words in $n$ letters and the set of monomials in $n$ selfadjoint 
indeterminates, and use this identification in notations as well. It will sometimes be convenient to write a word as $\underline{X}^w$ instead of just $w$;
that is, if $w$ has $p$ letters, we let $w=\ell_1\ell_2\ell_3\cdots\ell_p$, and we identify it with the degree $p$ monomial $\underline{X}^w=X_{\ell_1}X_{\ell_2}X_{\ell_3}
\cdots X_{\ell_p}$. 
The set of all \textit{self-adjoint elements} of $\mathbb C\langle\underline{X}\rangle$ is defined as $\text{Sym}\mathbb C\langle\uX\rangle:=
\{f\in\mathbb C\langle\uX\rangle\colon f=f^\star\}$.
%
\subsection{Tracial functionals and moment matrices}
Let $\tau\colon\mathbb C\langle\uX\rangle\to\C$ be a positive tracial functional, that is, a $\mathbb C$-linear map such that 
$\tau(f^\star f)\ge0$ and $\tau(fg)=\tau(gf)$ for all $f,g\in\mathbb C\langle\underline{X}\rangle$. 
The functional $\tau$ is called {\em faithful} if $\tau(f^\star f) = 0$ implies $f = 0$. 
Let $\M_d(\tau)$ be the moment matrix of $\tau$, i.e., the 
symmetric matrix indexed by words of $\langle\uX\rangle_d$ ordered according to $\leq_{\text{gl}}$, which is defined by $\M_d(\tau)_{u,v}:=\tau(u^\star v)$. 
Since $\tau$ is positive, the moment matrix $\M_d(\tau)$ is positive semi-definite. 
If, in addition, $\tau$ is faithful, then $\M_d(\tau)$ is positive definite,
and thus invertible. As some of the applications we envision are to the theory of von Neumann algebras \cite[Chapter 5]{Takesaki01}, we consider a special subcategory of the 
set of such traces, which we shall call {\em bounded traces}. These are traces $\tau$ that satisfy the additional condition that there exists a real 
number $M>0$ such that $|\tau(w)|<M^d$ for all $d\in\mathbb N$, $w\in\langle\underline{X}\rangle_d$. If $M$ is known/given, we say that 
$\tau$ is bounded by $M$. If in addition $\tau(1)=1$, then $\tau$ is called a {\em bounded tracial state}.
Bounded tracial states correspond to noncommutative probability distributions; see \cite[Proposition 5.2.14 (d)]{anderson2010introduction} 
for more details about how they appear in the free probability literature.
\subsection{Plurisubharmonic functions}\label{PSHintro}
One of the main tools we use in our paper is the theory of plurisubharmonic functions. We introduce here the basic notions 
necessary for our purposes. The main references we use are \cite{GZ,Klimek,ST}. 

There are many equivalent characterizations of plurisubharmonic functions. We choose here the classical path: given a domain  $G\subseteq\mathbb C$, a function
$f\colon G\to[-\infty,+\infty)$ is {\em subharmonic} if it is  not identically equal to $-\infty$, it is upper semicontinuous, and it satisfies the sub-mean value inequality:
$f(z)\leq(2\pi)^{-1}\int_{-\pi}^\pi f(z+re^{i\theta})\,{\rm d}\theta$ for any $z\in G$ and $r>0$ such that the closed disk of center $z$ and radius $r$ is included in $G$.

Given a domain $D\subseteq\mathbb C^n$, a function $u\colon D\to[-\infty,+\infty)$ is {\em plurisubharmonic} if it is upper semicontinuous, not identically $-\infty$, 
and for any $z\in D$ and $b\in\mathbb C^n$, the function $\zeta\mapsto u(z+\zeta b)$ is subharmonic or identically $-\infty$ on each component of $\{\zeta\in\mathbb C
\colon z+\zeta b\in D\}$. 

An extremely useful characterization of plurisubharmonic functions is given in terms of their second derivatives: essentially, $u$ is plurisubharmonic if and only if for any 
$\xi\in\mathbb C^n$, $\langle\mathcal L_u(\underline{z})\xi,\xi\rangle:=\sum_{1\le j,k\le n}\xi_j\bar\xi_k\frac{\partial^2 u}{\partial z_j\partial \bar{z}_k}\ge0$ in the sense of 
(Schwartz) distributions -- hence $\langle\mathcal L_u(\cdot)\xi,\xi\rangle$ is a positive measure on $\mathbb C^n$. For the precise statement and proof of this result, see, for 
instance, \cite[Proposition 1.43]{GZ}, or \cite[Section 2.9]{Klimek}.

Plurisubharmonic functions behave well with respect to taking limits. Two such results will be implicitly used later in our paper (see \cite[Propositions 1.28, 1.39, and 1.40]{GZ}):
\begin{enumerate}
\item Let $\{u_k\}_{k\in\mathbb N}$ be a decreasing sequence of plurisubharmonic functions in $D$. If $\lim_{k\to\infty}u_k$ is not identically $-\infty$, then it is plurisubharmonic.
\item Let $(u_i)_{i\in I}$ be a family of plurisubharmonic functions in a domain $D$, which is locally uniformly bounded from above, and let $u=\sup_{i\in I}u_i$. Then
the upper semicontinuous regularization $u^*$ of $u$ is plurisubharmonic in $D$ ($u^*$ can be obtained as $u^*(z)=\limsup_{\zeta\to z}u(\zeta)$).\label{2}
\end{enumerate}
The set $\{u^*> u\}$ is generally small -- always of zero Lebesgue measure. In cases of interest to us, this result can be strenghtened. For this, we introduce the notion of
pluripolar sets. A set $F\subset\mathbb C^n$ is called pluripolar if if for all $a\in F$ there exists a neighborhood $W$ of $a$ in $\mathbb C^n$ and a function $v$ which is
plurisubharmonic on $W$ such that $F\cap W\subseteq\{v=-\infty\}$ (see \cite[Definition 1.4, Appendix B.1]{ST} and comments following it).
It is known that the set on which a plurisubharmonic function is equal to $-\infty$ cannot be large. For instance, it has volume zero both in $\mathbb C^n$ and in any (maximal
dimension) Euclidean sphere or torus included in $\mathbb C^n$ -- see \cite[Proposition 1.34]{GZ}, and, for a much deeper understanding, \cite[Section 4.4]{GZ}. A property
is said to hold {\em quasi-everywhere} on a set $S$ if it holds on $S\setminus F$ for a pluripolar set $F$. Theorem 1.7 of \cite[Appendix B.1]{ST}, due to Bedford and Taylor,
states that if the functions $u_i,i\in I$, from \eqref{2} above are defined on all of $\mathbb C^n$ and have at most logarithmic growth at infinity, then the set
$\{u^*> u\}$ is actually pluripolar. Moreover, according to Theorem 1.6 from the same reference, $u^*$ itself has at most logarithmic growth at infinity.

\section{Noncommutative Christoffel-Darboux Kernels}
\label{sec:ncchristoffel}

\subsection{Orthonormal polynomials}\label{ort}
Assume the linear functional $\tau\colon\mathbb C\langle\underline{X}\rangle\to\mathbb C$ is positive. Then $(u,v)\mapsto\tau(v^\star u)$ defines a positive sesquilinear
form on $\mathbb C\langle\underline{X}\rangle$ which transforms it into a pre-Hilbert space. Completing it with respect to the seminorm $\|u\|=\tau(u^\star u)^\frac12$
and factoring out the kernel $\{\|u\|=0\}$ yields a Hilbert space which we  denote by $L^2(\tau)$. Although most of the following statements still hold under
weaker hypotheses, we assume from now on that $\tau$ is a faithful tracial state. In that case $\langle\underline{X}\rangle$ is linearly independent and spans $L^2(\tau)$ as
Hilbert space. For all $d\in\mathbb N$, let us define the family of orthonormal polynomials, 
$\{P_w\}_{w\in\langle\uX\rangle_d}$, ordered according to the lexicographic order, and  satisfying for all $v,w\in\langle\uX\rangle_d$:
\begin{align}
\label{eq:orthonormal}
\tau(P_v^\star\,P_w)=\delta_{v=w}\,, \quad\tau(P_v^\star\,w)=0\,, \text{ if } w <_{\text{gl}} v \,, \quad\tau(P_w^\star\, w)>0\,,\quad P_1=1.
\end{align}
Of course, $\mathrm{span}\{w\colon w<_{\text{gl}}v\}=\mathrm{span}\{P_w\colon w<_{\text{gl}}v\}$ for all $v\in\langle\uX\rangle$. Such a 
family can be constructed easily by using the classical Gram-Schmidt orthonormalization process applied to the basis of monomials $\langle\underline{X}\rangle$. 
Written in the language of the $\tau$-induced 
inner product, it is given recursively by the initial condition $P_1(\uX)=1\in\mathbb C$ and the general expression 
$$
P_w(\underline{X})=
\frac{w\displaystyle-\sum_{v<_{\text{gl}}w}\tau(P^\star_v(\uX)w)P_v(\uX)}{\displaystyle\left(\tau(w^\star w)-\sum_{v<_{\text{gl}}w}|
\tau(P^*_v(\uX)w)|^2\right)^\frac12}, \quad w\in\langle\underline{X}\rangle_d.
$$
It is immediately clear from the construction that 
$\tau(P_w^\star P_w)=1$, $\tau(P_v^\star P_w)=0$ 
if $v<_{\text{gl}}w$, and $$\tau(w^\star P_w)=
\frac{\tau(w^\star w)-\sum_{v<_{\text{gl}}w}\tau(P^*_v(\uX)w)\tau(w^\star P_v(\uX))}{\left(\tau(w^\star w)-\sum_{v<_{\text{gl}}w}|
\tau(P^*_v(\uX)w)|^2\right)^\frac12}>0.$$ Of course, since $\tau(P_w^\star w)=\overline{\tau(w^\star P_w)}$, we also have $\tau(P_w^\star w)>0$. 
Finally, if $v_0<_{\text{gl}}w,$ then $v_0$ is a linear combination of elements $P_v,v\le_{\text{gl}}v_0$, so that $\tau(P_wv^\star_0)=
\sum_{v\le_{\text{gl}}v_0}\bar{e}_v\tau(P_wP_v^\star)=0.$ In the context, we should emphasize that faithfulness of $\tau$ automatically implies 
the denominator in the expression of $P_w(\underline{X})$ above is nonzero. If $\tau$ is not faithful, it may well happen that $\tau(w^\star w)=
\sum_{v<_{\text{gl}}w}|\tau(P^*_v(\uX)w)|^2$; in that case, we may perform the orthonormalization procedure above and the vast majority of the
analysis that follows below on a quotient space, as in \cite{beckermann2020perturbations}: one ``drops'' in the Gram-Schmidt process any monomial $w$ 
which is a linear combination of elements $v<_{\text{gl}}w$ in the pre-Hilbert space induced by $\tau$. 

\begin{remark}\label{tres-uno}
\begin{trivlist}
\item[(1)]
Since $\tau(P^\star)=\overline{\tau(P)}$, it follows immediately from the above that the family $\{P_w^\star\}_{w\in\langle\uX\rangle_d}$ is itself 
an orthonormal basis for $\mathbb C\langle\uX\rangle_d$. Moreover, $\tau(P_ww^\star)\allowbreak>0$ if $\tau$ is faithful, and $\tau(P_vw^\star)=0$ whenever $w<_\text{gl}v$.
However, generally $P_w^\star\neq P_{w^\star}$ 
and it is not even 
clear whether $P_w^\star\in\{P_w\colon w\in\langle\uX\rangle_d\}$. It is nevertheless clear that 
$\mathrm{span}\{P_w\colon w\in\langle\uX\rangle_d\setminus\langle\uX\rangle_{d-1}\}
=\mathrm{span}\{P_w^\star\colon w\in\langle\uX\rangle_d\setminus\langle\uX\rangle_{d-1}\}$, $d\in\mathbb N$.

\item[\ (2)] The fact that both $\{P_w\}_{w\in\langle\uX\rangle_d}$ and $\{P_w^\star\}_{w\in\langle\uX\rangle_d}$ are orthonormal bases implies that any 
correspondence sending one to the other is a unitary transformation of $L^2(\mathbb C\langle\underline{X}\rangle_d,\tau)$. Thus, there exists a 
unitary matrix $U=(U_{v,w})_{v,w\in\langle\underline{X}\rangle_d}$ such that $P^\star_{v_0}(\underline{X})=\sum_{v\in\langle\uX\rangle}U_{v_0,v}
P_v(\underline{X}).$ (As seen above, $U$ leaves $\mathrm{span}\{P_w\colon w\in\langle\uX\rangle_d\setminus\langle\uX\rangle_{d-1}\}$ invariant 
for all $d$, so it is block-diagonal.) The matrix $U$ being unitary is equivalent to stating that $UU^\star=U^\star U=I_{\bsigma(n,d)}$. In particular, 
$P_{v_0}(\underline{X})=\sum_{v\in\langle\uX\rangle}(U^\star)_{v_0,v}P_v^\star(\underline{X})=\sum_{v\in\langle\uX\rangle}\overline{U_{v,v_0}}
P_v^\star(\underline{X})$, and $\delta_{v=v_0}=\sum_{w\in\langle\underline{X}\rangle_d}U_{v,w}\overline{U_{v_0,w}}$.
Moreover, the orthonormality relations yield $\tau(P^\star_{v_0}(\underline{X})P^\star_{v_1}(\underline{X}))=\sum_{v\in\langle\uX\rangle}U_{v_0,v}$ 
$\tau(P_v(\underline{X})P^\star_{v_1}(\underline{X}))=U_{v_0,v_1}.$ Under the assumption of traciality for $\tau$, we obtain that $U_{v_0,v_1}=U_{v_1,v_0}$ for all 
$v_0,v_1\in\langle\underline{X}\rangle_d$, so that the unitary matrix $U$ (and necessarily its adjoint $U^\star$ too) is symmetric.
This simple observation will be useful in the definition of the Christoffel-Darboux kernel.
\end{trivlist}
\end{remark}

Polynomials $P_w$ are usually not selfadjoint, and the construction offered above cannot be expected to provide 
selfadjoint orthonormal bases from a non-selfadjoint set of monomials. While the orthonormal basis $\{P_w\}_{w\in\langle\underline{X}\rangle}$
turns out to be sufficient for our purposes, we would like to specify as an aside that it is quite easy to create a family of selfadjoint orthonormal 
polynomials out of the hermitization of our monomials. This follows directly from the nature of the Gram-Schmidt process, if we place a convenient
modification of the lexicographic order on the set of hermitized monomials. 
It is well-known that if $\underline{X}^w$ is not selfadjoint, then $\text{Span}_\mathbb C\{\underline{X}^w,\underline{X}^{w^\star}\}=
\text{Span}_\mathbb C\left\{\frac{\underline{X}^w+\underline{X}^{w^\star}}{2},\frac{\underline{X}^w-\underline{X}^{w^\star}}{2i}\right\},$ and 
$\frac{\underline{X}^{w^\star}+\underline{X}^w}{2},\frac{\underline{X}^w-\underline{X}^{w^\star}}{2i}$ are both selfadjoint. For any 
$d\in\mathbb N$, take the set $\langle\underline{X}\rangle_d\setminus\langle\underline{X}\rangle_{d-1}$ and proceed 
through the set of monomials according to the lexicographic order. We proceed the following way: when we see a selfadjoint element, we leave it 
alone; when we see a non-selfadjoint monomial $\underline{X}^w$, we replace it with its {\em real part} $\Re\underline{X}^w=
\frac{\underline{X}^w+\underline{X}^{w^\star}}{2}$. At the same time, we replace $\underline{X}^{w^\star}$ (which, according to this procedure, 
must necessarily be greater than $\underline{X}^w$) with the the {\em imaginary part} of $\underline{X}^w$, namely $\Im\underline{X}^w=
\frac{\underline{X}^w-\underline{X}^{w^\star}}{2i}$, to be placed in our ordered list in the slot previously occupied by $\underline{X}^{w^\star}$  (of course, the first 
element in the list is selfadjoint, namely $X^d_1$). This process yields a basis of $\text{Span}_\mathbb C(\langle\underline{X}\rangle_d\setminus\langle\underline{X}\rangle_{d-1})$ 
which is entirely composed of selfadjoint elements, for each $d\in\mathbb N$ (the case $d=0$ corresponds to the selfadjoint 1 and the case $d=1$ to the selfadjoints $X_1,\dots,X_n$).
Thus, our leave it ordered according to the same lexicographic order, where the position of selfadjoint monomials remains
unchanged, and if $w^\star {}_{\text{gl}}\!\!>w$, then the position of $\Re\underline{X}^w$ is at $w$, and the position of $\Im\underline{X}^w$ is at $w^\star$.
The Gram-Schmidt procedure applied to this basis yields a family $\{S_w\}_{w\in\langle\underline{X}\rangle}\subset\text{Sym}
\mathbb C\langle\uX\rangle\subset\mathbb C\langle\underline{X}\rangle$ of selfadjoint orthogonal polynomials. This is a known fact
(it can be viewed as a reformulation of the general fact that the Gram-Schmidt procedure applied to a basis in a real Hilbert space with respect
to the real inner product or with respect to its complexification yields the same result). However, it can also easily be argued by induction after the ordered set of words. 
With the convention, valid only in this paragraph, that $w\in\langle\uX\rangle$ denotes not the monomial $\uX^w$, 
but the selfadjoint basis element it indexes according to the above procedure, the general formula of the $w_0^{\rm th}$ orthonormal polynomial is, as before,
$$
S_{w_0}(\underline{X})=
\frac{w_0\displaystyle-\sum_{v<_{\text{gl}}w_0}\tau(S_v(\uX)w_0)S_v(\uX)}{\displaystyle\left(\tau(w_0^\star w_0)-\sum_{v<_{\text{gl}}w_0}|
\tau(S_v(\uX)w_0)|^2\right)^\frac12}, \quad w_0\in\langle\underline{X}\rangle_d.
$$
By hypothesis, $w_0\!=\!w_0^\star,S_v(\uX)\!=\!S_v^\star(\uX)$. Moreover, $\overline{\tau(S_v(\uX)w_0)}\!=\!\tau\left((S_v(\uX)w_0)^\star\right)
\!=\tau(w_0^\star S_v^\star(\uX))=\tau(w_0S_v(\uX))=\tau(S_v(\uX)w_0)$, so that $\tau(S_v(\uX)w_0)\in\mathbb R$. Thus, 
$S_{w_0}(\underline{X})=S_{w_0}^\star(\underline{X})$. As $S_1(\uX)=1$ is selfadjoint, this completes our argument.


Before going forward to the study of the  Christoffel-Darboux kernel, let us list a few simple or (by now) well-known examples of orthogonal polynomials:
\begin{example}\label{ortex}\noindent
\begin{enumerate}
\item It is well-known, and easy to check, that any compactly supported Borel probability measure on $\mathbb R$ whose support contains an infinity of 
points admits a family of orthonormal polynomials from $\mathbb C\langle X\rangle=\mathbb C[X].$ The state $\tau$ corresponding to it is simply the 
integration of the polynomial with respect to the given probability measure. $\tau$ is in fact faithful if and only if the support of the corresponding 
probability measure is an infinite set. Otherwise, the number of linearly independent monomials equals the number of points in the support.
The reader can find a vast and fascinating literature on the subject by searching in \cite{dunkl2014orthogonal} and references therein.

\item Multivariate orthogonal polynomials are another obvious classical example, to which reference \cite{dunkl2014orthogonal} is mainly dedicated. Given 
a Euclidean space $\mathbb R^n$, a compactly supported Borel probability measure on it admits a family of orthonormal polynomials from 
$\mathbb C[\uX]=\mathbb C\langle\uX\rangle/\langle X_iX_j=X_jX_i\colon1\le i,j,\le n\rangle$ with respect to the trace $\tau$ defined by the 
integration with respect to the given probability measure. If the support of our probability is not concentrated on any finite union of algebraic curves 
(for instance if its interior in $\mathbb R^n$ is nonempty), then the trace is faithful on $\mathbb C[\uX]$. However, it is quite clear that such a trace 
does {\em not} satisfy our condition of faithfulness: since $-(X_1X_2-X_2X_1)^2=(X_1X_2-X_2X_1)(X_2X_1-X_1X_2)=(X_2X_1-X_1X_2)^\star
(X_2X_1-X_1X_2)$ is a positive polynomial in $\mathbb C\langle\uX\rangle\setminus\{0\}$ but it is the zero polynomial in $\mathbb C[\uX]$, we have 
$\tau(-(X_1X_2-X_2X_1)^2)=\tau(0)=0$, so $\tau$ is far from being faithful on $\mathbb C\langle\uX\rangle$ whenever $n\ge2$. Thus, the 
noncommutative version of orthogonal polynomials differs drastically from the classical one in more than one variable.

\item An example from \cite[Section 3]{A1} will be useful later in the paper. Assume that $\tau$ is bounded and the components $X_1,\dots,X_n$ of 
$\uX$ are free \cite{V1} with respect to $\tau$. Assume moreover that $\tau$ is faithful, which, in this case, is equivalent to requiring that the - classical -
distribution of $X_j$ with respect to $\tau$ has infinite support in $\mathbb R$. We denote by $P^{(j)}_k(X_j),k\in\mathbb N,$ the orthogonal polynomials 
associated to the distribution of $X_j$ with respect to $\tau$. Given $w\in\langle\uX\rangle_d$, we write it as $w=(w_1,w_2,\dots,w_d)$. We partition 
$w$ in intervals given by consecutively repeating indices: $w=(\pi_1,\pi_2,\dots,\pi_\ell)$ for some $\ell\in\{1,\dots,d\}$. For example, if $n=3$, $d=10$, 
and $w=(2,2,2,1,3,3,1,1,1,1)$, then $w=(\pi_1,\pi_2,\pi_3,\pi_4)$ where $\pi_1=(2,2,2),\pi_2=(1),\pi_3=(3,3),\pi_4=(1,1,1,1)$. We associate to
each $\pi_j$ the orthonormal polynomial $P^{(\iota_j)}_{|\pi_j|}(X_{\iota_j})$, where $|\pi_j|$ is the cardinality of the block $\pi_j$, and $\iota_j\in\{1,
\dots,n\}$ is the index that is contained by $\pi_j$. Then $P_w(\uX)=P^{(\iota_1)}_{|\pi_1|}(X_{\iota_1})P^{(\iota_2)}_{|\pi_2|}(X_{\iota_2})\cdots 
P^{(\iota_\ell)}_{|\pi_\ell|}(X_{\iota_\ell})$, $w\in\langle\uX\rangle$, is a system of orthonormal polynomials for $\tau$. (For example, with 
$w=(2,2,2,1,3,3,1,1,1,1)$, we would obtain $P_w(X_1,X_2,X_3)=P_3^{(2)}(X_2)P_1^{(1)}(X_1)P_2^{(3)}(X_3)P_4^{(1)}(X_1)$.) This follows from 
the definition of free independence and the condition of orthonormality imposed on each family $P^{(j)}_k(X_j)$, $1\le j\le n,k\in
\mathbb N.$ The case when, say, $\mathbb C\langle X_1,\dots,X_p\rangle$ and $\mathbb C\langle X_{p+1},\dots,X_n\rangle$ are free with respect to 
$\tau$ is treated conceptually precisely the same way, and the orthonormal polynomials associated to $\tau$ on $\mathbb C\langle\uX\rangle$ are 
again alternating products of orthonormal polynomials corresponding to the restrictions of $\tau$ to $\mathbb C\langle X_1,\dots,X_p\rangle$ and 
$\mathbb C\langle X_{p+1},\dots,X_n\rangle$, respectively, but the notation becomes more involved. Faithfulness of $\tau$ on either of
$\mathbb C\langle X_1,\dots,X_p\rangle$ or $\mathbb C\langle X_{p+1},\dots,X_n\rangle$ is not necessary for the above result to hold.

\item Several other interesting examples can be found in, or adapted from, \cite{A1,A2}, including free Meixner laws or orthogonal polynomials 
corresonding to other noncommutative independences.
\end{enumerate}
\end{example}

\subsection{Operator space structure and the Christoffel-Darboux kernel}
For any two vector spaces $\mathcal{ V,W}$ over $\mathbb C$, the algebraic tensor product vector space over $\mathbb C$, 
$\mathcal V\otimes\mathcal W$, is well-defined. If $\mathcal{V,W}$ are complex algebras, this vector space has itself two algebra 
structures: $\mathcal V\otimes\mathcal W$  with the multiplication $(\sum_i v_i\otimes w_i)(\sum_j \tilde{v}_j\otimes \tilde{w}_j)=
\sum_{i,j}v_i\tilde{v}_j\otimes w_i\tilde{w}_j$ and $\mathcal V\otimes\mathcal W^{\rm op}$ with the multiplication
$(\sum_i v_i\otimes w_i)(\sum_j \tilde{v}_j\otimes \tilde{w}_j)=\sum_{i,j}v_i\tilde{v}_j\otimes \tilde{w}_jw_i$. This second 
multiplication is important because it allows in some circumstances the identification 
$\mathcal V\otimes\mathcal V^{\rm op}\simeq\mathscr L(\mathcal V,\mathcal V)$, the space of linear operators from $\mathcal V$
to itself, via $\sum_i v_i\otimes \tilde{v}_i\mapsto\left[x\mapsto\sum_i v_ix\tilde{v}_i\right]$. This identification works well for the space 
$\Mbb_k(\mathbb C)$ and, when $\mathcal V$ is a von Neumann algebra, there are completions of the range of 
$\mathcal V\otimes\mathcal V^{\rm op}$ in $\mathscr L(\mathcal V,\mathcal V)$ that 
are quite important in the theory of operator spaces (we refer to \cite{ER,Vern,Pisier} as fundamental sources for operator spaces theory,
and as an example of the application of the usefulness of this view of $\mathcal V\otimes\mathcal V^{\rm op}$ in free probability, to
the now-classical paper \cite{GS}).

Let us apply the above to $\mathcal{V}=\Mbb_k(\mathbb C)$ and $\mathcal W=\mathbb C\langle\uX\rangle$. We have the natural identification 
$\Mbb_k(\mathbb C)\otimes\mathbb C\langle\uX\rangle\simeq\Mbb_k(\mathbb C\langle\uX\rangle)$. This is a star algebra with the adjoint operation 
$[(P_{ij}(\uX))_{i,j=1}^k]^\star=(P_{ji}^\star(\uX))_{i,j=1}^k$. Writing $(P_{ij}(\uX))_{i,j=1}^k$ as $(P_{ij}(\uX))_{i,j=1}^k=
\left(\sum_w\alpha_{ij}^{(w)}P_w(\uX)\right)_{i,j=1}^k$, $\alpha_{ij}^{(w)}=\tau\left(P_{ij}(\uX)P_w^\star(\uX)\right)$, yields
\begin{eqnarray*}
(P_{ij}(\uX))_{i,j=1}^k=\left(\sum_w\alpha_{ij}^{(w)}P_w(\uX)\right)_{i,j=1}^k & = & \sum_{i,j=1}^k \sum_w\alpha_{ij}^{(w)}e_{ij}\otimes 
P_w(\uX)\\
& = & \sum_w\left(\sum_{i,j=1}^k\alpha_{ij}^{(w)}e_{ij}\right)\otimes P_w(\uX)\\
& = & \sum_wC_w((P_{ij}(\uX))_{i,j=1}^k)\otimes P_w(\uX),
\end{eqnarray*}
where $C_w((P_{ij}(\uX))_{i,j=1}^k)\in\Mbb_k(\mathbb C),C_w((P_{ij}(\uX))_{i,j=1}^k)=\sum_{i,j=1}^k\alpha_{ij}^{(w)}e_{ij}$. 
Written in this form, the star operation is simply $\left(\sum_wC_w((P_{ij}(\uX))_{i,j=1}^k)\otimes P_w(\uX)\right)^\star
=\sum_wC_w((P_{ij}(\uX))_{i,j=1}^k)^\star\otimes P_w^\star(\uX)$. We define an $\Mbb_k(\mathbb C)$-valued sesquilinear form
$\left\langle(P_{ij}(\uX))_{i,j=1}^k,(Q_{ij}(\uX))_{i,j=1}^k\right\rangle=
(\text{Id}_{\Mbb_k(\mathbb C)}\otimes\tau)\left((P_{ij}(\uX))_{i,j=1}^k[(Q_{ij}(\uX))_{i,j=1}^k]^\star\right)
=\sum_{w}C_w((P_{ij}(\uX))_{i,j=1}^k)C_w((Q_{ij}(\uX))_{i,j=1}^k)^\star.$ With these notations, if $A\in\Mbb_k(\mathbb C),$ ${\bf P}, {\bf Q}\in
\Mbb_k(\mathbb C\langle\uX\rangle),$ then $\langle{\bf P}A,{\bf Q}\rangle=\langle{\bf P},{\bf Q}A^*\rangle$, 
$\langle A{\bf P},{\bf Q}\rangle=A\langle{\bf P},{\bf Q}\rangle$, $\langle{\bf P},A{\bf Q}\rangle
=\langle{\bf P},{\bf Q}\rangle A^\star$. This makes $\Mbb_k(\mathbb C\langle\uX\rangle)$ into a $\Mbb_k(\mathbb C)$-Hilbert 
module (see \cite[Chapter 14]{Vern}). From now on, we denote matrix-valued polynomials by boldface letters. 

\begin{remark}\label{N}
For fixed $0\neq k\in\mathbb N$, a polynomial ${\bf P}\in\Mbb_k(\mathbb C\langle\uX\rangle),\mathbf P(\underline{X})=\sum_{w\in\langle\uX\rangle}
c_w\otimes P_w(\uX)$ is a noncommutative function \cite[Section 2.1]{kaliuzhnyi2014foundations}. Indeed, consider an arbitrary unital $C^*$-algebra $\mathcal A$ 
(we will mostly need the case $\mathcal A=\Mbb_l(\mathbb C)$ for some $l$, possibly different from $k$) 
and an $\Mbb_k(\mathbb C)$-$\mathcal A$-bimodule $\mathcal B$ $($usually $\mathbb M_{k\times l}(\mathbb C))$. One defines
the function 
$$
\mathcal A^n\times\mathcal B\ni(\underline{a},b)\mapsto\mathbf P(\underline{a},b)=\sum_{w\in\langle\uX\rangle}c_wbP_w(\underline{a}).
$$
Amplification to $\iota\times\iota$ matrices is done the obvious way: if 
$(\underline{a},b)\in(\mathbb M_\iota(\mathcal A))^n\times\mathbb M_\iota(\mathcal B)$, then 
$$
\mathbf P(\underline{a},b)=\sum_{w\in\langle\uX\rangle}(c_w\otimes I_\iota)bP_w(\underline{a})=\sum_{w\in\langle\uX\rangle}
[\mathrm{diag}(\underbrace{c_w,\dots,c_w}_{\iota\text{ \rm times}})]bP_w(\underline{a}).
$$
\end{remark}
For reasons that will become clear shortly, we prefer to write $\mathbf P(\underline{a})(b)$ instead of $\mathbf P(\underline{a},b)$. 
Indeed, evaluation of polynomials as above is essential in the rest of the paper. For a $\mathbf P(\underline{X})\in
\mathbb M_k(\mathbb C\langle\uX\rangle)$, the evaluation in an $n$-tuple of matrices $\uA\in\mathbb M_k(\mathbb C)^n$ allows us, as 
already mentioned, to view $\mathbf P$ as a linear map on $\mathbb M_k(\mathbb C)$:
if $\mathbf P(\underline{X})=\sum_{w\in\langle\uX\rangle}c_w\otimes\uX^w$, then $\mathbf P(\underline{A})=
\sum_{w\in\langle\uX\rangle}c_w\otimes\uA^w\colon\mathbb M_k(\mathbb C)\to\mathbb M_k(\mathbb C),$ 
$\mathbf P(\underline{A})(C)=\sum_{w\in\langle\uX\rangle}c_wC\uA^w$. In this context, recall that the star operation 
on $\mathbb C\langle\uX\rangle$ extends to $\mathbb M_k(\mathbb C\langle\uX\rangle)$ the obvious way: if, say, $\mathbf P(\underline{X})
=\sum_{w\in\langle\uX\rangle}c_w\otimes\uX^w$, then $\mathbf P^\star(\underline{X})=
\sum_{w\in\langle\uX\rangle}c^\star_w\otimes\uX^{w^\star}=\sum_{w\in\langle\uX\rangle}c^\star_{w^\star}\otimes\uX^w$.
Thus, when performing evaluations on possibly non-selfadjoint tuples, $\mathbf P(\underline{A})^\star
=\left[\sum_{w\in\langle\uX\rangle}c_w\otimes\uA^w\right]^\star=\sum_{w\in\langle\uX\rangle}c_w^\star\otimes(\uA^w)^\star
=\sum_{w\in\langle\uX\rangle}c_w^\star\otimes(\uA^\star)^{w^\star}=\sum_{w\in\langle\uX\rangle}c^\star_{w^\star}
\otimes(\uA^\star)^w=\mathbf P^\star(\uA^\star)$. It is useful to record this equality, together with the one corresponding to scalar polynomials:
\begin{equation}\label{starry}
\mathbf P(\underline{A})^\star=\mathbf P^\star(\uA^\star),\quad P(\uA)^\star=P^\star(\uA^\star), \quad \uA\in\mathbb M_k(\mathbb C)^n,k\in
\mathbb N.
\end{equation}
It is remarkably convenient that this star operation coincides with the adjoint operation when $\mathbb M_k(\mathbb C)$ is viewed as a Hilbert space with 
the Hilbert-Schmidt norm. Indeed, with Tr denoting the non-normalized trace,
\begin{align*}
\left\langle C,\mathbf P(\underline{A})(D)\right\rangle_\mathrm{HS} 
& = \mathrm{Tr}_k\left(\left[\sum_{w\in\langle\uX\rangle}c_wD\uA^w\right]^\star C\right)
= \mathrm{Tr}_k\left(\sum_{w\in\langle\uX\rangle}(\uA^w)^\star D^\star c^\star_wC\right)\\
&= \mathrm{Tr}_k\left(D^\star \sum_{w\in\langle\uX\rangle} c^\star_{w^\star}C(\uA^\star)^w\right)= 
\mathrm{Tr}_k\left(D^\star\mathbf P^\star(\uA^\star)(C)\right)\\
&=\left\langle\mathbf P^\star(\uA^\star)(C),D\right\rangle_\mathrm{HS},\quad 
C,D\in\mathbb M_k(\mathbb C),\uA\in\mathbb M_k(\mathbb C)^n,k\in\mathbb N.
\end{align*}
It should be emphasized that re-normalizing Tr changes nothing in the above. Also, the same algebraic computations
apply equally well for a tracial state on a C$^{\star}$-algebra. In particular, for a II${}_1$ factor $\mathcal M$ with normal 
faithful trace-state tr, if $\mathbf P(\uX)\in\mathcal M\otimes\mathbb C\langle\uX\rangle$, then $[\mathbf P(\underline{a})]^\star(c)=
\mathbf P^\star(\underline{a}^\star)(c)$ for any $\underline{a}\in\mathcal M^n,c\in L^2(\mathcal M,\mathrm{tr})$.

When viewed as an operator on the Hilbert space $\mathbb M_k(\mathbb C)$, the norm of $\mathbf P(\underline{A})$ is defined the usual way:
$\|\mathbf P(\underline{A})\|=\sup\{\|\mathbf P(\underline{A})(C)\|_\mathrm{HS}\colon\|C\|_\mathrm{HS}=1\}.$ This is essentially the 
same norm as the $C^*$-norm of $\mathbf P(\underline{A})$ when viewed as acting on $\mathbb C^k\otimes\mathbb C^k$. 
Indeed, with $\mathbf P(\underline{A})=\sum_{w\in\langle\uX\rangle}c_w\otimes\uA^w$, the following two quantities are equal:
\begin{equation}\label{HS}
\sup_{\|C\|_\mathrm{HS}=1}\|\mathbf P(\underline{A})(C)\|_\mathrm{HS}^2=
\sup_{\|C\|_\mathrm{HS}=1}\mathrm{Tr}_k\left(\sum_{v,w\in\langle\uX\rangle}(\uA^v)^\star C^\star c^\star_vc_wC\uA^w\right),
\end{equation}
\begin{equation}\label{H}
\sup\|\mathbf P(\underline{A})\sum_{i,j=1}^k\alpha_{i,j}e_i\otimes e_j\|_2^2=\sup\sum_{v,w\in\langle\uX\rangle}\sum_{i,j,p,q=1}^k\alpha_{i,j}
\bar{\alpha}_{p,q}\langle c_ve_i,c_we_p\rangle\langle\uA^ve_j,\uA^we_q\rangle,
\end{equation}
where the supremum in the second equality is taken after all arrays $\alpha_{i,j},1\le i,j\le k$ such that $\sum_{i,j=1}^k|\alpha_{i,j}|^2=1$. (As before, the result in
\eqref{HS} does not change regardless of whether one uses Tr${}_k$ or tr${}_k$ in defining the HS norm.) 
The first expression identifies with the second by simply splitting $C$ into matrix units. This form identifies $\mathbf P(\uA)$ acting on 
$(\mathbb M_k(\mathbb C),\langle\,\cdot\,,\,\cdot\,\rangle_\mathrm{HS})$ as an element in the tensor product of von Neumann algebras 
$\mathbb M_k(\mathbb C)\otimes\mathbb M_k(\mathbb C)^{\rm op}$ and $\mathbf P(\uA)$ acting on $(\mathbb C^k\otimes\mathbb C^k,\langle\,\cdot\,,\,\cdot\,\rangle)$ 
as an element of $\mathbb M_k(\mathbb C)\otimes\mathbb M_k(\mathbb C).$ In this case it follows that $\mathbf P(\uA)$ has the same norm 
regardless of whether it is viewed in one or in the other. 
It should also be noted that the spectral properties of $\mathbf P(\uA)$ do not change 
regardless of whether it is viewed as acting on $\mathbb C^k\otimes\mathbb C^k$, on $(\mathbb M_k(\mathbb C),\|\cdot\|_\mathrm{HS}),$ or on 
$(\mathbb M_k(\mathbb C),\|\cdot\|)$. However, the notion of positivity as an operator on the Hilbert space $(\mathbb M_k(\mathbb C),\|\cdot\|_\mathrm{HS})$ 
is different from the notion of positivity as an operator on the Banach space $(\mathbb M_k(\mathbb C),\|\cdot\|)$ endowed with the order determined by the cone of 
positive semidefinite matrices.

For every $d \in \N$, we define the bi-variate polynomial $\kappa_{\tau,d}$ as follows
\begin{align}
\label{eq:kappa}
\kappa_{\tau,d} (\uX,\uY) := \sum_{w \in \langle \uX \rangle_d } P_w(\uX)\otimes P_w^\star(\uY) \,.
\end{align}
If $\tau$ is {\em not faithful}, then the above notation $w\in\langle\uX\rangle_d$ should be understood as running only through
a subset of {\em linearly independent} monomials in the completion $L^2(\tau):=L^2(\mathbb C\langle\underline{X}\rangle,\tau)$ of 
$\mathbb C\langle\uX\rangle$ with respect to $\|P\|_2=\tau(P^\star P)^\frac12$ which span $\mathbb C\langle\underline{X}\rangle_d$ -- we do not ``repeat'' any summand.
Placing the adjoints on the first or on the second tensor in the above definition is a matter of choice. Indeed, as noted in Section \ref{ort}, 
$\{P^\star_v\}_{v\in\langle\uX\rangle_d}$ is an orthonormal basis as well, so that the correspondence between the elements of 
$\{P_w\}_{w\in\langle\uX\rangle_d}$ and $\{P_v^\star\}_{v\in\langle\uX\rangle_d}$ is unitary.
With the notations from Section \ref{ort},
\begin{align}
\sum_{w \in \langle \uX \rangle_d }P_w(\uX)\otimes P_w^\star(\uY)
&=\!\sum_{w\in\langle\uX\rangle_d}\left(\sum_{v_0\in\langle\uX\rangle_d}\!(U^*)_{w,v_0}P^\star_{v_0}(\uX)\right)
\!\otimes\!\left(\sum_{v\in\langle \uX \rangle_d }\!U_{w,v}P_v(\uY)\right)\nonumber\\
& = \!\sum_{v,v_0\in\langle\uX\rangle_d}\left(\sum_{w\in\langle\uX\rangle_d}U_{w,v}(U^*)_{w,v_0}\right)P_{v_0}^\star(\uX)\otimes P_v(\uY)
\nonumber\\
& = \!\sum_{v,v_0\in\langle\uX\rangle_d}\delta_{v=v_0}P_{v_0}^\star(\uX)\otimes P_v(\uY)=\!\sum_{v\in\langle\uX\rangle_d}P_{v}^\star(\uX)\otimes 
P_v(\uY).\label{symmetry}
\end{align}
(As shown in Remark \ref{tres-uno}, $U$ is symmetric, so that $U_{w,v}(U^*)_{w,v_0}=U_{v,w}(U^*)_{w,v_0}$.)

One may view $\kappa_{\tau,d}$ as a  reproducing kernel for the finite-dimensional space $L^2(\mathbb C\langle\underline{X}\rangle_d,\tau)$: we 
tautologically have
\begin{align*}
(\mathrm{Id}_{\mathbb C\langle\underline{X}\rangle}\otimes\tau)(\kappa_{\tau,d}(\uX,\uY)(1\otimes P(\uY))) & = \sum_{v\in\langle\uX\rangle_d}
(\mathrm{Id}_{\mathbb C\langle\underline{X}\rangle}\otimes\tau)(P_{v}(\uX)\otimes P^\star_v(\uY)P(\underline{Y}))\\
& = \sum_{v\in\langle\uX\rangle_d}P_{v}(\uX)\otimes\tau(P^\star_v(\uY)P(\underline{Y}))1\\
& = \sum_{v\in\langle\uX\rangle_d}\tau(P^\star_v(\uY)P(\underline{Y}))P_{v}(\uX)\otimes 1\\
&=  P(\underline{X})\otimes1,
\end{align*}
for any $P\in\mathbb C\langle\underline{X}\rangle_d$.
More remarkably, with the previously introduced Hilbert space $L^2(\tau):=L^2(\mathbb C\langle\underline{X}\rangle,\tau)$ as the completion 
of $\mathbb C\langle\uX\rangle$ with respect to $\|P\|_2=\tau(P^\star P)^\frac12$, by letting $d\to\infty$ in the above, we may write the $L^2$-limit
\begin{equation}\label{reproduction}
\lim_{d\to\infty}(\mathrm{Id}_{\mathbb C\langle\underline{X}\rangle}\otimes\tau)(\kappa_{\tau,d}(\uX,\uY)(1\otimes f(\underline{Y})))=
f(\underline{X})\otimes1,\quad f\in L^2(\tau).
\end{equation}
In this (very weak) sense, one may state that $\kappa_{\tau}(\uX,\uY)=\lim_{d\to\infty}\kappa_{\tau,d}(\uX,\uY)$ exists.
The reproducing kernel $\kappa_{\tau,d} (\uX,\uY)$ is called the {\em  noncommutative Christoffel polynomial} associated to $\tau$ and $d$.
Note that 
\begin{align*}
(\tau\otimes\tau)(\kappa_{\tau,d}(\uX,\uY)^\star\kappa_{\tau,d}(\uX,\uY)) & = \sum_{v,w\in\langle \uX \rangle_d }
(\tau\otimes\tau)(P_v^\star(\uX)P_w(\uX)\otimes P_w^\star(\uY)P_v(\uY))\\
& = \sum_{v,w\in\langle \uX \rangle_d }\tau(P_v^\star(\uX)P_w(\uX))\tau(P_v(\uY)P_w^\star(\uY))\\
& = \sum_{w\in\langle \uX \rangle_d }\tau(P_w^\star(\uX)P_w(\uX))\tau(P_w(\uY)P_w^\star(\uY))\\
&=  \sum_{w\in\langle \uX \rangle_d }\!1\cdot1=\bsigma(n,d), 
\end{align*}
by orthonormality of the $\{P_w \}_{w \in \langle \uX \rangle _d}$.

The bi-variate polynomial $\kappa_{\tau,d}$ is in fact a globally defined completely positive noncommutative kernel in the sense of \cite[Sections 2.3 and 2.4]{BMV}
(for exact definitions, which are not relevant for our curent paper, we refer to conditions (2.3), (2.4), and (2.10) in this reference); indeed, for any strictly
positive $k,k'\in\mathbb N$, $\kappa_{\tau,d}$ acts on $\Mbb_{k\times k'}(\mathbb C)$ via $\Mbb_k(\mathbb C)^n\times\Mbb_{k'}
(\mathbb C)^n \to \mathscr L(\Mbb_{k\times k'}(\mathbb C),\Mbb_{k\times k'}(\mathbb C))$, 
$(\uA,\uB)\!\mapsto\!\left[X\mapsto\sum_{w \in \langle \uX \rangle_d } P_w(\uA)XP_w^\star(\uB)\right]$
and thus $\kappa_{\tau,d}(\uA,\uA^\star)\ge0$, i.e. it is completely positive, for all $\uA\in\mathbb M_k(\mathbb C)^n.$
Note that we obtain a different action on $\Mbb_{k\times k'}(\mathbb C)$ for each choice of $\underline{A}$ and $\underline{B}$. 
We also note for future use that, since $P_1(\uX)=1$ (see \eqref{eq:orthonormal}), we have $\kappa_{\tau,d}(\uA,\uA)(C)=
\sum_{w\in\langle\uX\rangle_d}P_w(\uA)CP_w^\star(\uA)\succeq P_1(\uA)CP_1^\star(\uA)=C$ whenever evaluated on elements 
of a unital $C^*$-algebra in which the coordinates of $\uA$ are selfadjoint and $C\succeq0$.

For each $\uA = (A_1,\dots,A_n) \in \Mbb_k(\mathbb C)^n$, let us define $\Lambda_{\tau,d}(\uA) :=  \kappa_{\tau,d} (\uA,\uA^\star)(I_k)^{-1}$, 
as an operator in $\Mbb_k(\mathbb C)$. The function $\Lambda_{\tau,d}$ is called the {\em  noncommutative Christoffel function} associated to 
$\tau$ and $d$. We see from the above that $\Lambda_{\tau,d}$ is well-defined on $\mathcal A^n$ for any unital $C^*$-algebra $\mathcal A$, 
and in particular it is well defined on all of $\Mbb_k(\mathbb C)^n$, $k\in\mathbb N$, and 
$\Lambda_{\tau,d}(\uA)\preceq I_k,\uA\in \Mbb_k(\mathbb C)^n$.

We now prove the noncommutative analog of \cite[Theorem~3.1]{lasserre2019empirical}, which can be found in  e.g.~\cite{dunkl2014orthogonal} 
and~\cite{nevai1986geza}. 
We emphasize that the $\min$ appearing in \eqref{eq:optim} is a minimum of a matrix-valued quantity. 
\begin{theorem}
\label{th:christoffeloptim}
Let $\uA\in\Mbb_k(\mathbb C)^n$ be fixed, arbitrary. Then 
\begin{equation}
\label{eq:optim}
\displaystyle\Lambda_{\tau,d}(\uA)=\min
\left\lbrace\,({\rm Id}_{\Mbb_k(\mathbb C)}\otimes\tau)({\bf P}{\bf P}^\star)\,: {\bf P}\in\Mbb_k(\mathbb C\langle\uX\rangle_d),{\bf P}(\uA)(I_k)=I_k
\right\rbrace.
\end{equation}
\end{theorem}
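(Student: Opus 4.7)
The plan is to parametrize $\mathbf{P}$ in the orthonormal basis $\{P_w\}_{w\in\langle\uX\rangle_d}$ and reduce the minimization to a matrix-valued Cauchy--Schwarz problem. Writing $\mathbf{P}(\uX)=\sum_{w\in\langle\uX\rangle_d} c_w\otimes P_w(\uX)$ with coefficients $c_w\in\Mbb_k(\mathbb{C})$, the evaluation rule from Remark~\ref{N} gives $\mathbf{P}(\uA)(C)=\sum_w c_w\,C\,P_w(\uA)$, so the constraint $\mathbf{P}(\uA)(I_k)=I_k$ becomes the single matrix equation $\sum_w c_w P_w(\uA)=I_k$. For the objective, expanding $\mathbf{P}\mathbf{P}^\star=\sum_{v,w}c_w c_v^\star\otimes P_w P_v^\star$ and combining the traciality of $\tau$ with the orthonormality $\tau(P_v^\star P_w)=\delta_{v,w}$ yields $(\mathrm{Id}_{\Mbb_k(\mathbb{C})}\otimes\tau)(\mathbf{P}\mathbf{P}^\star)=\sum_{w\in\langle\uX\rangle_d}c_w c_w^\star$.

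The problem is now a Loewner-order matrix-valued least-squares problem: minimize $\sum_w c_w c_w^\star$ subject to $\sum_w c_w P_w(\uA)=I_k$. I would organize the $c_w$ into a block row $C=[c_{w_1}\mid\cdots\mid c_{w_N}]$ and the $P_w(\uA)$ into a block column $P=[P_{w_1}(\uA);\ldots;P_{w_N}(\uA)]$, so that the constraint reads $CP=I_k$ and the objective reads $CC^\star$. The positive semidefiniteness of the block outer product $[C\ P^\star]^\star[C\ P^\star]$, combined with a Schur-complement step, gives the lower bound $CC^\star\succeq(P^\star P)^{-1}$. To match this bound I would exhibit the explicit optimizer $c_w=\Lambda_{\tau,d}(\uA)P_w(\uA)^\star$: a direct computation shows both that it satisfies the linear constraint and that it achieves the matrix value $\Lambda_{\tau,d}(\uA)$ for the objective, once one knows $\sum_w P_w(\uA)^\star P_w(\uA)=\Lambda_{\tau,d}(\uA)^{-1}$.

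The main obstacle is precisely this last identification: the Gram matrix naturally appearing in the least-squares calculation is $\sum_w P_w(\uA)^\star P_w(\uA)$, whereas $\Lambda_{\tau,d}(\uA)^{-1}=\kappa_{\tau,d}(\uA,\uA^\star)(I_k)$ is $\sum_w P_w(\uA)P_w(\uA)^\star$, and in the noncommutative setting the order of the factors is not benign. Reconciling them is where the symmetry of the Christoffel--Darboux kernel enters: the polynomial identity $\sum_w P_w P_w^\star=\sum_w P_w^\star P_w$, which follows from the symmetry of the unitary change-of-basis matrix between $\{P_w\}$ and $\{P_w^\star\}$ (Remark~\ref{tres-uno}(2), itself a consequence of traciality), together with the evaluation rule $P_w^\star(\uA^\star)=P_w(\uA)^\star$ of~\eqref{starry}, allows one to identify the two Gram matrices and close the argument.
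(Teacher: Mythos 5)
Your proposal is correct and follows essentially the same path as the paper: expanding in the orthonormal basis reduces the constraint to $\sum_w c_w P_w(\uA)=I_k$ and the objective to $\sum_w c_w c_w^\star$, the Schur complement of the $2\times 2$ block Gram matrix (equivalently, positivity in the Hilbert $\Mbb_k(\mathbb C)$-module, which is how the paper phrases it) gives the lower bound, and the explicit minimizer $c_w=\Lambda_{\tau,d}(\uA)P_w(\uA)^\star$ saturates it. The paper handles the reconciliation of $\sum_w P_w(\uA)^\star P_w(\uA)$ with $\kappa_{\tau,d}(\uA,\uA^\star)(I_k)$ in exactly the way you anticipate, by appealing to the symmetry identity \eqref{symmetry} derived from the traciality of $\tau$, though it does so only with a parenthetical citation where you correctly flag it as the one nontrivial step.
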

It is remarkable that the set in the right-hand side of \eqref{eq:optim} has a minimum, given that the set of positive operators is not a lattice. 
Moreover, under the assumption that $\tau$ is bounded, it can be realized as the distribution of a tuple $\underline{a}_\tau$ of selfadjoints in a 
von Neumann algebra with respect to a normal faithful trace tr, and then $({\rm Id}_{\Mbb_k(\mathbb C)}\otimes\tau)({\bf P}{\bf P}^\star)=
({\rm Id}_{\Mbb_k(\mathbb C)}\otimes\text{tr})({\bf P}(\underline{a}_\tau){\bf P}(\underline{a}_\tau)^\star)$,
the conditional expectation of a positive element in a von Neumann algebra.
\begin{proof}[Proof of Theorem \ref{th:christoffeloptim}]
Let us define $c_w(\uA)=\Lambda_{\tau,d}(\uA)P_w(\uA)\in\mathbb M_k(\mathbb C)$, for all $ w\in\langle\uX\rangle_d$ and 
${\bf P}_d(\uX):=\sum_{w\in\langle\uX\rangle_d}c_w(\uA)\otimes P_w^\star(\uX)\in\Mbb_k(\mathbb C\langle\uX\rangle_d)$.
(If $\tau$ is not faithful, we again drop monomials from the set $\langle\uX\rangle_d$ so that it becomes a basis for $L^2(\tau,\mathbb C\langle\uX\rangle_d)$.) 
This polynomial ${\bf P}_d$ is feasible for \eqref{eq:optim} since one has
\[
{\bf P}_d(\uA)\!(I_k\!)\!=\!\!\!\!\!\!\sum_{w\in\langle\uX\rangle_d}\!\!\!\!\!\!c_w(\uA)\!I_k\!P_w(\uA)^\star\!\!=\!\Lambda_{\tau,d}(\uA)\!\!\!\!\!\!\sum_{w \in \langle\uX\rangle_d}\!\!\!
\!\!\!P_w(\uA)\!I_k\!P_w(\uA)^\star\!\!=\!\!\Lambda_{\tau\!,d}(\uA)\kappa_{\tau\!,d}(\uA,\uA^\star)(I_k\!)\!\!=\!\!I_k.
\]
In addition, one has (we use the op algebra structure on the  second tensor)
\begin{align*}
{\bf P}_d(\uX){\bf P}_d^\star(\uX) & = \sum_{v,w \in \langle \uX \rangle_d} c_w(A) c_v(A)^\star\otimes P_v(\uX)P_w^\star(\uX),
\end{align*}
so
\begin{align*}
({\rm Id}_{\Mbb_k(\mathbb C)}\otimes\tau) ({\bf P}_d(\uX){\bf P}_d^\star(\uX)) & = \sum_{w \in \langle \uX \rangle_d} c_w(A) c_w(A)^\star \\
& = \sum_{w \in \langle \uX \rangle_d} \Lambda_{\tau,d}(\uA)P_w(\uA) P_w(\uA)^\star\Lambda_{\tau,d}(\uA) \\
& = \Lambda_{\tau,d}(\uA) \kappa_{\tau,d} (\uA,\uA^\star)(I_k)\Lambda_{\tau,d}(\uA) = \Lambda_{\tau,d}(\uA)\,,
\end{align*}
proving that the set in the right hand-side of \eqref{eq:optim} contains $\Lambda_{\tau,d}(\uA)$.

To investigate the minimality of the element $\Lambda_{\tau,d}(\uA)$, recall the Hilbert $\mathbb M_k(\mathbb C)$-module structure on $\mathbb M_k
(\mathbb C)^{|\langle\uX\rangle_d|}=\mathbb M_k(\mathbb C)^{\bsigma(n,d)}$: $\displaystyle\langle (a_w)_w,(b_w)_w\rangle_d
=\sum_{w\in\langle\uX\rangle_d}a_wb_w^*$. Consider an arbitrary polynomial ${\bf P}(\uX)=
\displaystyle\sum_{w\in\langle\uX\rangle_d}a_w\otimes P_w(\uX)$ such that ${\bf P}(\uA)(I_k)=I_k$. By the definition of the 
$\mathbb M_k(\mathbb C)$-valued inner product, this last expression is equivalent to stating $\langle(a_w)_w,(P_w(\uA)^\star)_w\rangle_d=I_k$.
With the Hilbert module structure (see \cite[Chapter 14]{Vern}), we are guaranteed the positivity 
\begin{align}
&\begin{bmatrix}
\langle(a_w)_w,(a_w)_w\rangle_d&\langle(a_w)_w,(P_w(\uA)^\star)_w\rangle_d\\\langle(P_w(\uA)^\star)_w,(a_w)_w\rangle_d
&\langle(P_w(\uA)^\star)_w,(P_w(\uA)^\star)_w\rangle_d
\end{bmatrix}\label{Bi}\\&=\begin{bmatrix}
\langle(a_w)_w,(a_w)_w\rangle_d & I_k\\ I_k & \langle(P_w(\uA)^\star)_w,(P_w(\uA)^\star)_w\rangle_d
\end{bmatrix}\succeq0.\nonumber
\end{align}
As $\langle(P_w(\uA)^\star)_w,(P_w(\uA)^\star)_w\rangle_d^{-1}=\left[\sum P_w(\uA)^\star P_w(\uA)\right]^{-1}=\Lambda_{\tau,d}(\uA)$ (see 
\eqref{symmetry}) is known to exist, the positivity of the above matrix is equivalent, by considering the Schur complement \cite[Theorem 7.7.6]{horn}, or
\cite[Lemma 3.1]{Vern}, to the relation
$$
I_k\!\langle(P_w(\uA)^\star)_w,\!(P_w(\uA)^\star)_w\rangle_d^{-1}\!I_k\!\preceq\langle(a_w)_w,(a_w)_w\rangle_d
=\!\!\!\!\sum_{w\in\langle\uX\rangle_d}\!\!\!\!a_wa_w^\star
=({\rm Id}_{\Mbb_k(\mathbb C)}\otimes\tau)({\bf P}{\bf P}^\star\!),
$$
as claimed.
\end{proof}
We mention below some re-interpretations of Theorem 
\ref{th:christoffeloptim}; one remarkable point is that the normalization $\mathbf P(\uA)(I_k)=I_k$ is in fact quite arbitrary, even though useful.

\begin{remark}
\begin{trivlist}
\item[{\rm(1)}]
The positivity of the matrix in \eqref{Bi} imposes automatically the domination relation\!
$\langle(a_w\!)_w,\!(P_w(\uA)^\star)_w\rangle_d\Lambda_{\tau\!,d}(\uA)\langle(P_w(\uA)^\star)_w,\!(a_w\!)_w\rangle_d\!\!\preceq\!\!\langle(a_w
\!)_w,\!(a_w\!)_w\rangle_d$ for {\em all} vectors $(a_w)_w\in\mathbb M_k(\mathbb C)^{|\langle\uX\rangle_d|}$. Identifying the optimal solution 
$(a_w)_{w\in\langle\underline{X}\rangle_d}=(\Lambda_{\tau,d}(\uA)P_w^\star(\uA))_{w\in\langle\underline{X}\rangle_d}$ comes simply to identifying 
the element which transforms $\preceq$ into $=$ while not creating a null space through  conjugation with $\langle(a_w)_w,(P_w(\uA)^\star)_w
\rangle_d,$ i.e. while imposing that this element stays invertible $($of course, the most convenient choice to impose is 
$\langle(a_w)_w,(P_w^\star(\uA))_w\rangle_d=I_k)$. 
\item[\, {\rm(2)}] Under the assumption that $\langle(a_w)_w,\!(a_w)_w\rangle_d$ is invertible, positivity of the leftmost matrix in relation \eqref{Bi} is 
equivalent, via the Schur complement, to
$$
\langle(P_w(\uA)^\star)_w,\!(a_w)_w\rangle_d  
\langle(a_w)_w,\!(a_w)_w\rangle_d^{-1}
\langle(a_w)_w,\!(P_w(\uA)^\star)_w\rangle_d\!  \preceq \!
\langle(P_w(\uA)^\star)_w,\!(P_w(\uA)^\star)_w\rangle_d.
$$
After substituting $\langle(P_w(\uA)^\star)_w,(P_w(\uA)^\star)_w\rangle_d =  \kappa_{\tau,d}(\uA,\uA^\star)(I_k) $, $\langle(a_w)_w,\!(a_w)_w\rangle_d = ({\rm Id}_{\mathbb M_k(\mathbb C)}\otimes\tau)(\mathbf{P}(\uX){\bf P}^\star(\uX))$, and $\langle(a_w)_w,(P_w(\uA)^\star)_w\rangle_d = \mathbf{P}(\uA)(I_k)$ we obtain
$$
\mathbf{P}(\uA)(I_k)^\star({\rm Id}_{\mathbb M_k(\mathbb C)}\otimes\tau)(\mathbf{P}(\uX){\bf P}^\star(\uX))^{-1}\mathbf{P}(\uA)(I_k)\preceq
\kappa_{\tau,d}(\uA,\uA^\star)(I_k).
$$
By employing an arbitrarily small perturbation, the above holds also if $\langle(a_w\!)_w,\!(a_w\!)_w\rangle_d$ is not invertible, in the sense that the 
left-hand side stays bounded by $\kappa_{\tau,d}(\uA,\uA^\star)(I_k)$ when the perturbation tends to zero. This relation can be viewed as the 
noncommutative equivalent of the inequality
$|P(\underline{x})|^2\leq\kappa_{\mu,d}(\underline{x},\underline{x})\int|P(t)|^2\,{\rm d}\mu(t)$, $\underline{x}\in\mathbb R^n$. Moreover, Equation 
\eqref{eq:optim} is then rewritten as
\begin{eqnarray}
\lefteqn{\kappa_{\tau,d}(\uA,\uA^\star)(I_k)}\nonumber\\
& & =\max\{\mathbf P(\uA)(I_k)^\star\mathbf P(\uA)(I_k)\colon{\bf P}\in\Mbb_k(\mathbb C\langle\uX\rangle_d),
({\rm Id}_{\Mbb_k(\mathbb C)}\otimes\tau)({\bf P}{\bf P}^\star)=I_k\}.\nonumber 
\end{eqnarray}
Indeed, the Schur complement in the left-hand side of the previous domination relation remains invariant under multiplication of $\mathbf P$ to the left 
with $G\otimes 1$ for any $G\in GL_k(\mathbb C)$.
\item[\, {\rm(3)}] One may of course extend the definition of $\Lambda_{\tau,d}$ to be simply $\Lambda_{\tau,d}(\uA,\uB)(C)=\left(\kappa_{\tau,d}(\uA,
\uB^\star)(C)\right)^{-1}$ for all $\uA,\uB,C$ for which the inverse is well-defined. While that does not seem to be a useful extension, we would like to 
point out that $\Lambda_{\tau,d}(\uA)(C):=\left(\kappa_{\tau,d}(\uA,\uA^\star)(C)\right)^{-1}$ {\em is} well defined for any $C\succ0$, and moreover 
$\Lambda_{\tau,d}(\uA)(C)=\min\{({\rm Id}_{\Mbb_k(\mathbb C)}\otimes\tau)({\bf P}{\bf P}^\star)\colon{\bf P}\in\Mbb_k(\mathbb C\langle\uX
\rangle_d),\mathbf P(\uA)(C^{1/2})=I_k\}$.
\item[\, {\rm(4)}]  A brief inspection of the proof of Theorem \ref{th:christoffeloptim} shows that the result holds as well when one replaces $\mathbb 
M_k(\mathbb C)$ with any finite factor $\mathcal A$ $($and in particular, $\mathbb M_k(\mathbb C\langle\uX\rangle)$ with $\mathcal A\otimes\mathbb 
C\langle\uX\rangle)$. The essential element of the proof is that $\mathcal A^{\bsigma(n,d)}$ accepts the $\mathcal A$-Hilbert module structure 
$\langle(a_w)_w,(b_w)_w\rangle_d=\sum a_wb_w^\star$. 
\item[\, {\rm(5)}] We record for future use the expression of the minimizer in \eqref{eq:optim}: for any operator algebra $\mathcal A$ on a Hilbert space,
\begin{equation}
{\bf P}_d(\uX)=(\Lambda_{\tau,d}(\underline{a})\otimes 1)
\sum_{w\in\langle\uX\rangle_d}P_w(\underline{a})\otimes P_w^\star(\uX)\in\mathcal A\otimes\mathbb C\langle\uX\rangle_d,
\quad\underline{a}\in(\mathcal A^{\rm sa})^n.
\end{equation}
In particular, for $\mathcal A\!=\!\Mbb_k(\mathbb C)$, we obtain the formula in Theorem \ref{th:christoffeloptim}.
The optimization problem stated in \eqref{eq:optim} is the noncommutative analog of  \cite[(3.2)]{lasserre2019empirical}.
\item[\, {\rm(6)}] The view of the evaluation $\mathbf P(\uA)(I_k)$ as a matrix-valued inner product has been essential in the proof of Theorem \ref{th:christoffeloptim}. 
Another re-interpretation of this evaluation might be more intuitive, namely as a matrix product. Tautologically, for $\mathbf P(\uX)=
\sum_{w\in\langle\uX\rangle_d}c_w\otimes\uX^w$, we have
\begin{equation}\label{vec}
\begin{array}{r}
\mathbf P(\uA)(I_k)=\mathfrak c\mathfrak A=\begin{bmatrix} c_0 & c_{X_1} & \cdots & c_{X_n^{d-1}X_{n-1}} & c_{X_n^d}\end{bmatrix}\\
\ \\
\ \\
\ \\
\ 
\end{array}\!\!\!
\begin{bmatrix} 1 \\ A_1 \\ \vdots \\ A_n^{d-1}A_{n-1}\\ A_n^d\end{bmatrix}.
\end{equation}
The matrix $\mathfrak c$ is the $k\times\bsigma(n,d)k$ complex matrix formed by the row of $k\times k$  blocks $c_w,w\in\langle\uX\rangle_d$. 
Similarly, $\mathfrak A\in\mathbb M_{k\bsigma(n,d)\times k}(\mathbb C)$ is the column matrix having as entries the $k\times k$ blocks $\uA^w,w\in
\langle\uX\rangle_d$. The matrix entries of the row vector $\mathfrak c$ are ordered lexicographically from left to  right, and the matrix entries $\uA^w$ 
of $\mathfrak A$ are ordered lexicographically from top to bottom. It is clear that one may amplify $\mathfrak c$ to a matrix $\mathsf c\in
\mathbb M_{k\bsigma(n,d)}(\mathbb C)$ by adding rows of zeros under $\mathfrak c$ and similarly one may add columns of zeros to the right of 
$\mathfrak A$ in order to form a square matrix $\mathsf A$ of the same size. Then
\begin{equation}\label{mat}
\mathbf P(\uA)(I_k)=\left[\mathsf c\mathsf A\right]_{1,1}
\end{equation}
is the $(1,1)$ entry of the product $\mathsf c\mathsf A\in\mathbb M_{\bsigma(n,d)}(\mathbb M_{k}(\mathbb C))$, or as the upper left $k\times k$ block
of $\mathbb M_{k\bsigma(n,d)}(\mathbb C)$.
\end{trivlist}
\   \label{Remark33}
\end{remark}

Let $D_d(\tau)$ be the lower triangular matrix whose rows are the coefficients of the polynomials $P_w$ ordered by $\leq_{\text{gl}}$. 
Then one can prove that the moment matrix of $\tau$ satisfies $\M_d(\tau)^{-1} =  D_d(\tau)^T D_d(\tau)$. 
As a consequence we obtain the following:
\begin{proposition}
\label{prop:inverse}
Recall that $\mathbf W_d(\uX)$ is the vector of all words of 
$\langle\uX\rangle_d$.
Let $\tau\colon\mathbb C\langle\uX\rangle\to\mathbb C$ be a faithful tracial state.
For any $n$-tuples of selfadjont operators $\uA,\uB$ and any bounded operator $C$ on a Hilbert space, one has 
\[
\kappa_{\tau,d}(\uA,\uB)(C)=\mathbf W_d(\uA)^TD_d(\tau)^TCD_d(\tau)\mathbf W_d^\star(\uB) .\] 
In particular, $\kappa_{\tau,d}(\uA,\uB)(1) = 
\mathbf W_d(\uA)^T \M_d(\tau)^{-1} \mathbf W^\star_d(\uB)$.
\end{proposition}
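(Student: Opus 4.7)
The plan is to unpack both sides directly from the definitions, using the key observation that the rows of $D_d(\tau)$ are, by construction, the coefficient vectors of the $P_w$'s in the monomial basis $\mathbf{W}_d(\underline{X})$. In matrix language this says precisely that, as a column vector of polynomials,
\[
\bigl(P_w(\underline{X})\bigr)_{w\in\langle\uX\rangle_d}=D_d(\tau)\,\mathbf{W}_d(\underline{X}).
\]
Evaluating at $\uA$ and transposing gives the row vector $\mathbf{W}_d(\uA)^T D_d(\tau)^T=\bigl(P_w(\uA)\bigr)_{w}^T$, while evaluating at $\uB$ and applying the $\star$-operation entrywise gives the column vector $D_d(\tau)\mathbf{W}_d^\star(\uB)=\bigl(P_w^\star(\uB)\bigr)_w$ (here I use \eqref{starry} together with selfadjointness of $\uB$ to identify $w^\star(\uB)$ with $w(\uB)^\star$, and the same for $P_w$).

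With these two identities in hand, I would simply compute
\[
\mathbf{W}_d(\uA)^T D_d(\tau)^T\, C\, D_d(\tau)\mathbf{W}_d^\star(\uB)
=\sum_{w\in\langle\uX\rangle_d}P_w(\uA)\,C\,P_w^\star(\uB),
\]
where the sum on the right is exactly the evaluation of the bi-variate polynomial $\kappa_{\tau,d}$ from \eqref{eq:kappa} on $(\uA,\uB)$ acting on $C$, using the $\mathrm{op}$-algebra structure on the second tensor factor that was introduced just before Remark \ref{N}. This gives the first equality.

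For the specialization $C=I$ (or $C=1$ in the operator-algebra setting), the plan is to substitute in the factorization $\M_d(\tau)^{-1}=D_d(\tau)^T D_d(\tau)$ stated immediately before the proposition, which in turn follows from orthonormality: writing the orthonormality relations $\tau(P_u^\star P_v)=\delta_{u,v}$ of \eqref{eq:orthonormal} as the matrix identity $D_d(\tau)\,\M_d(\tau)\,D_d(\tau)^T=I$ and inverting. The only bookkeeping subtlety I expect is a careful tracking of what ``$\mathbf{W}_d^\star(\uB)$'' means (entrywise polynomial star versus entrywise matrix adjoint) and a conjugation convention in $D_d(\tau)$; both are harmless because $\uA,\uB$ are selfadjoint and evaluation is an involutive $\star$-map in the sense of \eqref{starry}. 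Beyond that, the argument is a direct translation between a sum of rank-one products of orthonormal polynomial evaluations and the corresponding bilinear form written in the monomial basis.
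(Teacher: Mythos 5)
Your proof is correct and matches the derivation the paper leaves implicit: the paper states the factorization $\M_d(\tau)^{-1}=D_d(\tau)^T D_d(\tau)$ and presents the proposition as an immediate consequence, which is exactly what you unpack by writing $(P_w(\uX))_w=D_d(\tau)\,\mathbf{W}_d(\uX)$, substituting into the definition \eqref{eq:kappa}, and specializing $C=1$. One caveat worth flagging, though it is shared with the paper's own statement rather than being a defect of your argument: the identities $D_d(\tau)\,\mathbf{W}_d^\star(\uB)=(P_w^\star(\uB))_w$ and $D_d(\tau)\,\M_d(\tau)\,D_d(\tau)^T=I$ as written tacitly assume $D_d(\tau)$ has real entries (equivalently, that all moments $\tau(w)$, $w\in\langle\uX\rangle_{2d}$, are real), since for a general tracial state over $\mathbb C$ the orthonormality relations yield $\overline{D_d(\tau)}\,\M_d(\tau)\,D_d(\tau)^T=I$ and $(P_w^\star(\uB))_w=\overline{D_d(\tau)}\,\mathbf{W}_d^\star(\uB)$.
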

\subsection{Noncommutative Siciak extremal functions}\label{Sec34}

Our next objective is to investigate the asymptotic properties of $\Lambda_{\tau,d}$ as $d\to\infty$ when evaluated on various domains.
In order to perform this analysis, we are forced to place some restrictions both on the nature of $\tau$ and the 
domains on which we evaluate $\Lambda_{\tau,d}$. This necessity should probably not be too surprising, as even in the
much more studied classical context, a full description of this asymptotic behavior is not known. We are unaware of any
previous work in the noncommutative context.

%
%
%
For a fixed positive, bounded trace-state $\tau\colon\mathbb C\langle\underline{X}\rangle\to\mathbb C$,
we construct via the GNS construction \cite[Chapter I, Definition 9.15]{Takesaki01} the finite von Neumann algebra 
generated by the $n$ selfadjoint elements (the images of $X_1,\dots,X_n\in\mathbb C\langle\underline{X}\rangle$ via the GNS representation) 
whose joint law equals $\tau$ \cite[Proposition 5.2.14(d)]{anderson2010introduction}. We denote it by $W^*(\tau)$.

In the following, the faithfulness of $\tau$ will often be assumed for convenience. We will however try to indicate what changes occur
in various arguments if $\tau$ is not assumed to be faithful.

Our methods parallel to a significant extent the classical ones \cite{GZ,Klimek,ST}. The way to investigate the asymptotic behavior of the 
Christoffel-Darboux kernel in classical analysis involves the so-called Siciak extremal function. Let us start by proposing free noncommutative versions 
of this object. We consider $\underline{a}_\tau$ to be a tuple of selfadjoint variables in a von Neumann algebra $\mathcal M$ which is distributed 
according to $\tau$ with respect to the normal faithful tracial state tr on $\mathcal M$ (in particular, $W^*(\tau)$ is isomorphic as a von Neumann algebra to
the von Neumann subalgebra of $\mathcal M$ generated by the $n$-tuple $\underline{a}_\tau$). For each $d,k\in\mathbb N$, $\uA\in\Mbb_k(\mathbb C)^n$, we 
define
\begin{align}\label{330}
\Phi_{\tau,d}^2(\uA)=\sup\left\{{{\rm tr}_k(\mathbf P(\uA)(I_k)^\star\mathbf P(\uA)(I_k))}\colon\mathbf P\in\mathbb M_k(\mathbb C\langle\uX
\rangle_d),\|\mathbf P(\underline{a}_\tau)\|\le1\right\},\\
\Phi_{\tau,d}^\infty(\uA)=\sup\left\{\|\mathbf P(\uA)(I_k)\|^2\colon\mathbf P\in\mathbb M_k(\mathbb C\langle\uX\rangle_d),
\|\mathbf P(\underline{a}_\tau)\|\le1\right\}.\label{331}
\end{align}
Recall that $\mathbf P(\underline{a}_\tau)\in\mathbb M_k(W^*(\tau))$, so that the norm $\|\mathbf P(\underline{a}_\tau)\|$ is the operator norm on 
$\mathbb M_k(W^*(\tau))\simeq\mathbb M_k(\mathbb C)\otimes W^*(\tau)\subseteq\mathbb M_k(\mathbb C)\otimes\mathcal M$. Note that in 
\eqref{330} we have defined $\Phi^2_{\tau,d}$ as the trace of the product of the two matrices obtained by applying the linear operator $\mathbf P(\uA)$ 
to $I_k$. However, we would have obtained precisely the same result had we defined it as $\mathrm{tr}_k\left([\mathbf P^\star(\uA^\star)\mathbf 
P(\uA)](I_k)\right)$. Clearly $\Phi_{\tau,d}^2(\uA)\leq\Phi_{\tau,d}^\infty(\uA)$ for all $\uA$. Let us list next various properties of these functions which 
will be of use to us later.
\subsubsection{Plurisubharmonicity}\label{sspsh}
Both $\Phi_{\tau,d}^2(\uA)$ and $\Phi_{\tau,d}^\infty(\uA)$ are suprema of norms of analytic polynomials, hence entire plurisubharmonic functions 
(see Section \ref{PSHintro}) in the classical sense when viewed as functions on the complex Euclidean space $\mathbb C^{nk^2}\simeq(\mathbb M_k(\mathbb C))^n$. Moreover, 
$\|\mathbf P(\uA)(I_k)\|=\lim_{m\to\infty}{\rm tr}_k\left([\mathbf P(\uA)(I_k)^\star\mathbf P(\uA)(I_k)]^{m}\right)^{1/2m}$.

\subsubsection{Monotonicity}\label{monot} If $\underline{X}^{'}\subseteq\underline{X}$ is a subset, then 
$\Phi^\bullet_{\tau|_{\mathbb C\langle\underline{X}^{'}\rangle},d}(\uA^{'})\leq\Phi^\bullet_{\tau,d}(\uA^{'},\uA\setminus\uA^{'})$, for  
$\bullet\in\{2,\infty\},\underline{A}=(\uA^{'},\uA\setminus\uA^{'})\in\mathbb M_k(\mathbb C\langle\uX\rangle)^n,d\in\mathbb N$. 
This follows easily by noting that a polynomial $\mathbf P\in\mathbb C\langle\underline{X}^{'}\rangle_d$ belongs tautologically
to $\mathbb C\langle\underline{X}\rangle_d$ as well, and that the embedding $W^*(\tau|_{\mathbb C\langle\underline{X}^{'}\rangle})
\hookrightarrow W^*(\tau)$ is completely isometric.

\subsubsection{Compactness}\label{comp}
The set of polynomials $\left\{\mathbf P\!\in\mathbb M_k(\mathbb C\langle\uX\rangle_d)\colon\!
\|\mathbf P(\underline{a}_\tau)\|\le1\right\}$ is compact, in the sense that for any given basis $\{b_1,\dots,b_{\bsigma(n,d)}\}$ of 
$\mathbb C\langle\uX\rangle_d$, the set $\{(c_1,\dots,c_{\bsigma(n,d)})\in\mathbb M_k(\mathbb C)^{\bsigma(n,d)}\colon\|\sum c_j\otimes
b_j(\underline{a}_\tau)\|\le1\}$ is compact\footnote{It is important to remember that we work here under the hypothesis of faithfulness of  $\tau$. If, for 
instance, $\underline{a}_\tau$ were to satisfy some algebraic relation -- that is, if there were a $P\in\mathbb C\langle\uX\rangle$ such that 
$P(\underline{a}_\tau)=0$ -- 
the bound in \eqref{bound} would easily fail. In that case, one would need to work on a space of dimension strictly smaller than $\bsigma(n,d)$ in order for
this same result to hold -- see \cite{beckermann2020perturbations}. The 
reader may keep in mind the most extreme case, when all coordinates of $\underline{a}_\tau$ are complex multiples of the algebra's unit.} in the 
Euclidean space $\mathbb M_k(\mathbb C)^{\bsigma(n,d)}$. Indeed, all bases are equivalent via an invertible, finite-dimensional linear transformation, 
so it is enough to show this for $\{b_1,\dots,b_{\bsigma(n,d)}\}=\{P_w\}_{w\in\langle\uX\rangle_d},$ the basis of orthonormal polynomials. In that case, 
$\sum_wc_wc_w^\star=\sum_{v,w}c_wc_v^\star\otimes\text{tr}(P_v^\star(\underline{a}_\tau)P_w(\underline{a}_\tau))=
(\text{Id}_{\mathbb M_k(\mathbb C)}\otimes\text{tr})(\sum_{v,w}c_wc_v^\star\otimes P_v^\star(\underline{a}_\tau)
P_w(\underline{a}_\tau))=(\text{Id}_{\mathbb M_k(\mathbb C)}\otimes\text{tr})((\sum_{w}c_w\otimes P_w(\underline{a}_\tau))(\sum_{w}c_w\otimes 
P_w(\underline{a}_\tau))^\star)=(\text{Id}_{\mathbb M_k(\mathbb C)}\otimes\text{tr})(\mathbf P(\underline{a}_\tau)
\mathbf P(\underline{a}_\tau)^\star)$ for any $\mathbf P(\underline{X})=\sum_{w}c_w\otimes P_w(\underline{X})\in
\mathbb M_k(\mathbb C\langle\uX\rangle_d).$ Thus, the condition $\|\mathbf P(\underline{a}_\tau)\|\le1$ implies
\begin{equation}\label{bound}
1\ge\|\mathbf P(\underline{a}_\tau)\|^2\!=\|\mathbf P(\underline{a}_\tau)\mathbf P(\underline{a}_\tau)^\star\|\ge
\|(\text{Id}_{\mathbb M_k(\mathbb C)}\otimes\text{tr})(\mathbf P(\underline{a}_\tau)\mathbf P(\underline{a}_\tau)^\star)\|=\left\|\sum_{w\in\langle
\uX\rangle_d}\!c_wc_w^\star\right\|
\end{equation}
so that $\|c_w\|^2\le1$ for all ${w\in\langle\uX\rangle_d}$. This, together with the continuity of the functions involved, guarantees that the suprema in 
both \eqref{330} and \eqref{331} are in fact maxima.

\subsubsection{Matrix convexity} 
Recall that a set $K$ of matrices over $\C$ consists of subsets $K_k \subset \Mbb_k(\C)$, for all $k \in \N$.
Such a set $K$ is called {\em matrix convex} \cite[Section 3]{EW} if for all $A \in K_k$ and $B \in  K_{k'}$, one has $A\oplus B \in K_{k+k'}$, 
and for all $A \in K_k$ and $\alpha \in \Mbb_{k,k'}(\C)$ with $\alpha^\star \alpha = I_{k'}$, one has $\alpha^\star A \alpha \in K_k$.
The set $\left\{\mathbf P\in\mathbb M_k(\mathbb C\langle\uX\rangle_d)\colon
\|\mathbf P(\underline{a}_\tau)\|\le1\right\}$ is convex, and  in fact it is also matrix convex, which is verified as follows: if 
$\mathbf P\in\mathbb M_k(\mathbb C\langle\uX\rangle),\mathbf P'\in\mathbb M_{k'}(\mathbb C\langle\uX\rangle)$ satisfy the desired norm 
inequality, then $\|\mathbf P(\underline{a}_\tau)\oplus\mathbf P'(\underline{a}_\tau)\|\!\le\!1$ as well in 
$\mathbb M_{k+k'}(\mathbb C)\otimes\mathcal M$ (to be clear, if $\mathbf P(\uX)\!=\!\sum_w\!c_w\otimes P_w(\uX),\mathbf P'(\uX)=
\sum_wc_w'\otimes P_w(\uX)$, we view $\mathbf P(\underline{X})\oplus\mathbf P'(\underline{X})=\sum_w(c_w\oplus c_w')\otimes P_w(\uX)\in
\mathbb M_{k+k'}(\mathbb C\langle\uX\rangle)$). If $\alpha\in\mathbb M_{k\times k'}(\mathbb C)$ satisfies $\alpha^\star\alpha=I_{k'}$ (so implicitly 
$k'\le k$), the{\o}n 
$$
\left\|\sum_w(\alpha^\star c_w\alpha)\otimes P_w(\underline{a}_\tau)\right\|=\left\|(\alpha^\star\otimes1)\mathbf P(\underline{a}_\tau)(\alpha
\otimes1)\right\|\le\|\alpha\|^2\left\|\mathbf P(\underline{a}_\tau)\right\|\le1.
$$
Obviously, this implies that the maximizers providing equality in  \eqref{330} and \eqref{331} are extremal points in these (classically) convex sets.

\subsubsection{Invariance under left unitary action of the set of maximality}\label{ssunitary}
Equally relevant, let us observe that if $\mathbf P\in\mathbb M_k(\mathbb C\langle\underline{X}\rangle_d)$ is a polynomial such that one of the equalities
$\Phi_{\tau,d}^2(\uA)={\rm tr}_k(\mathbf P(\uA)(I_k)^\star\mathbf P(\uA)(I_k))$ or $\Phi_{\tau,d}^\infty(\uA)=\|\mathbf P(\uA)(I_k)\|^2$ takes place, 
then the corresponding equality still holds for the polynomial $(U\otimes1)\mathbf P$ for any $U\in\mathbb M_k(\mathbb C)$ satisfying $UU^\star=I_k$,
i.e. for any unitary matrix $U$. Indeed, this is straightforward since ${\rm tr}_k([(U\otimes1)\mathbf P](\uA)(I_k)^\star[(U\otimes1)\mathbf P](\uA)
(I_k))={\rm tr}_k\left(\,\left(\sum_w(Uc_w)\uA^w\right)^\star\ \left(\sum_v (Uc_v)\uA^v\right)\,\right)={\rm tr}_k\left(\,\left(\sum_w (\uA^w)^\star
c_w^\star\right)\ U^\star U\ \left(\sum_v c_v\uA^v\right)\,\right)={\rm tr}_k(\mathbf P(\uA)(I_k)^\star\mathbf P(\uA)(I_k))$, 
$\|[(U\otimes1\!)\mathbf P](\uA)(I_k)\|=\left\|\sum_w Uc_w\uA^w\right\|=\left\|U(\sum_w c_w\uA^w)\right\|\!=\left\|\sum_w c_w\uA^w\right\|
=\|\mathbf P(\uA)(I_k)\|$. Moreover, we have that if $\|\mathbf P(\underline{a}_\tau)\|\le1$, then  
$\|[(U\otimes1]\mathbf P)(\underline{a}_\tau)\|\le1$ as well. This is simply because $U\otimes 1$ is unitary in $\mathbb M_k(\mathbb C)\otimes 
W^*(\tau)$ whenever $U$ is in $\mathbb M_k(\mathbb C)$ and $\|[(U\otimes1)\mathbf P](\underline{a}_\tau)\|=
\|(U\otimes1)(\mathbf P(\underline{a}_\tau))\|=\|\mathbf P(\underline{a}_\tau)\|$. Thus, the two sets on which maxima are achieved in 
\eqref{330}, \eqref{331} are invariant under the left action of the unitary group of the $k$ by $k$ complex matrices. As an immediate consequence of the 
polar decomposition of operators, we may assume without loss of generality that the maximizer $\mathbf P$ satisfies the condition $\mathbf P(\uA)(I_k)
\succeq0$: indeed, if $\mathbf P(\uA)(I_k)=U|\mathbf P(\uA)(I_k)|$, then we replace $\mathbf P$ by $(U^\star\otimes1)\mathbf P$. Unfortunately that 
does not mean $\mathbf P\ge0$.

With the same methods, one shows that $\Phi^\bullet_{\tau,d}(\uA)\le\Phi^\bullet_{\tau,d}(U\uA U^\star),\bullet\in\{2,\infty\},$ for all unitary 
$k\times k$  complex matrices $U$, so that $\Phi^\bullet_{\tau, d}$ is constant on unitary orbits. Indeed, picking $\mathbf P(\underline{X})=
\sum_{w\in\langle\underline{X}\rangle_d}c_w\otimes\underline{X}^w$ for which the maximum $\Phi^\bullet_{\tau,d}(\underline{A})=
\|\mathbf P(\underline{A})(I_k)\|^2_\bullet$ is reached, one has 
\begin{eqnarray*}
\mathbf Q(U\underline{A}U^\star)(I_k)\mathbf Q(U\underline{A}U^\star)(I_k)^\star\!\!
& = & \!\!\sum_{v\in\langle\underline{X}\rangle_d}(c_vU^\star)U\underline{A}^vU^\star
\left(\sum_{w\in\langle\underline{X}\rangle_d}\!(c_wU^\star)U\underline{A}^wU^\star\right)^\star\\
& = & \!\!\sum_{v,w\in\langle\underline{X}\rangle_d}c_v(U^\star U)\underline{A}^v\left(\underline{A}^w\right)^\star(U^\star U)c_w^\star\\
& = & \!\! \mathbf P(\underline{A})(I_k)\mathbf P(\underline{A})(I_k)^\star,
\end{eqnarray*}
where $\mathbf Q(\underline{X})=\mathbf P(\underline{X})[U^\star\otimes1]$. As $1=\|\mathbf P(\underline{a}_\tau)\|=
\|\mathbf P(\underline{a}_\tau)[U^\star\otimes1]\|=\|\mathbf Q(\underline{a}_\tau)\|$, it follows that $\Phi_{\tau,d}^\bullet(U\uA U^\star)
\ge\|\mathbf Q(U\underline{A}U^\star)(I_k)\|_\bullet^2=\|\mathbf P(\underline{A})(I_k)\|_\bullet^2=\Phi_{\tau,d}^\bullet(\uA)$, for any
$k\times k$ unitary matrix $U$.

The proofs of the two statements above can be put together to yield a slightly stronger technical result: 
if $\mathbf P(\uX)=\displaystyle\sum_{w\in\langle\uX\rangle_d}c_w\otimes\underline{X}^w$ and $V,W$ are $k\times k$ unitaries, then 
$$
V\sum_{w\in\langle\uX\rangle_d}c_w\underline{A}^wW=\sum_{w\in\langle\uX\rangle_d}Vc_wW(W^\star\underline{A}W)^w=[(V\otimes1)\mathbf P(W\otimes1)](W^\star
\underline{A}W)(I_k);
$$
since $\|[(V\otimes1)\mathbf P(W\otimes1)](\underline{a}_\tau)\|=\|\mathbf P(\underline{a}_\tau)\|$, it follows that if $\Phi^\bullet_{\tau,d}(\uA)=\|\mathbf P(\uA)(I_k)\|_\bullet^2$
is achieved on $\mathbf P$, then $\Phi^\bullet_{\tau,d}(\uA)=\Phi^\bullet_{\tau,d}(W^\star\!\uA W)\ge\|[(V\otimes1)\mathbf P(W\otimes1)](W^\star\!
\underline{A}W)(I_k)\|_\bullet^2=\|\mathbf P(\underline{A})(I_k)\|^2_\bullet=\Phi^\bullet_{\tau,d}(\uA)$, that is,
\begin{equation}
\Phi^\bullet_{\tau,d}(\uA)=\left\|V\sum_{w\in\langle\uX\rangle_d}c_w\underline{A}^wW\right\|_\bullet^2=\|[(V\otimes1)\mathbf P(W\otimes1)](W^\star
\underline{A}W)(I_k)\|_\bullet^2.
\end{equation}
In particular, $\Phi^\bullet_{\tau,d}(\uA)=\Phi^\bullet_{\tau,d}(W^\star
\underline{A}W)$ can be achieved at an element that is diagonal in whichever basis it is desired.

\subsubsection{Domination}\label{ssminimal}
 The condition $\|\mathbf P(\underline{a}_\tau)\|\le1$ in relations \eqref{330} -- \eqref{331} is equivalent to either of 
$\mathbf P(\underline{a}_\tau)^\star\mathbf P(\underline{a}_\tau)\preceq I_k\otimes1$, $\mathbf P(\underline{a}_\tau)
\mathbf P(\underline{a}_\tau)^\star\preceq I_k\otimes1$ in $\mathbb M_k(\mathbb C)\otimes W^*(\tau)$. If there exists a $0\prec C\preceq I_k$ such 
that, say, $\mathbf P(\underline{a}_\tau)\mathbf P(\underline{a}_\tau)^\star\preceq C\otimes1$, then $(C^{-1/2}\otimes1)
\mathbf P(\underline{a}_\tau)\mathbf P(\underline{a}_\tau)^\star(C^{-1/2}\otimes1)\preceq I_k\otimes1,$ ensuring that the polynomial
$(C^{-1/2}\otimes1)\mathbf P(\underline{X})$ belongs to the acceptable set in the right hand side of \eqref{330} and \eqref{331}.
Then $((C^{-1/2}\otimes1)\mathbf P(\underline{A}))(I_k)=C^{-1/2}\mathbf P(\underline{A})(I_k)$, so that 
${\rm tr}_k(\mathbf P(\uA)(I_k)^\star C^{-1}\mathbf P(\uA)(I_k))\ge{\rm tr}_k(\mathbf P(\uA)(I_k)^\star\mathbf P(\uA)(I_k)),
\|\mathbf P(\uA)(I_k)^\star C^{-1}\mathbf P(\uA)(I_k)\|\!\ge\!\|\mathbf P(\uA)(I_k)^\star\mathbf P(\uA)(I_k)\|.$ (Since $C^{-1}\succneqq I_k$,
equality in the previous inequality of traces above can only take place if $\ker\mathbf P(\uA)(I_k)^\star\mathbf P(\uA)(I_k)\neq\{0\}$.)
This makes it clear that without loss of generality we may assume in both \eqref{330} and \eqref{331} that there is no element $C\precneqq I_k$ 
which is not a projection such that $\mathbf P(\underline{a}_\tau)\mathbf P(\underline{a}_\tau)^\star\preceq C\otimes1$.

However, we note that one may obtain a ``minimal maximizer'': assume that $\Phi^\infty_{\tau,d}(\uA)$ is achieved at $\mathbf P_{\uA}(\uX)
=\sum_{w\in\langle\uX\rangle_d}c_w\otimes\uX^w$. Then there exists a rank-one projection $p\in\mathbb M_k(\mathbb C)$ such that 
$\|\mathbf P_{\uA}(\uA)(I_k)\|^2=\|p\mathbf P_{\uA}(\uA)(I_k)\|^2$ (one simply picks $p$ to be the projection onto a norm-one vector 
$\xi\in\mathbb C^k$ such that $\|\mathbf P_{\uA}(\uA)(I_k)\|^2=\langle\mathbf P_{\uA}(\uA)(I_k)\mathbf P_{\uA}(\uA)(I_k)^\star\xi,\xi\rangle$). 
Since $\mathbf P_{\uA}(\uA)(I_k)=\sum_{w\in\langle\uX\rangle_d}c_w\uA^w$ it follows that
\begin{align*}
\Phi^\infty_{\tau,d}(\uA)&=\|p\mathbf P_{\uA}(\uA)(I_k)\|^2
=\left\|p\left(\sum_{w\in\langle\uX\rangle_d}c_w\uA^w\right)\left(\sum_{w\in\langle\uX\rangle_d}c_w\uA^w\right)^\star p\right\|\\
&=\left\|\left(\sum_{w\in\langle\uX\rangle_d}pc_w\uA^w\right)\left(\sum_{w\in\langle\uX\rangle_d}pc_w\uA^w\right)^\star\right\|=\left\|[(p\otimes1)
\mathbf P_{\uA}](\uA)(I_k)\right\|^2
\end{align*}
and of course $[(p\otimes1)\mathbf P_{\uA}](\uX)=\sum_{w\in\langle\uX\rangle_d}(pc_w)\otimes\uX^w$, so 
$\|[(p\otimes1)\mathbf P_{\uA}](\underline{a}_\tau)\|=1$, with $[(p\otimes1)\mathbf P_{\uA}](\underline{a}_\tau)[(p\otimes1)\mathbf P_{\uA}]
(\underline{a}_\tau)^\star\preceq p\otimes1$. On the other hand, for $\Phi_{\tau,d}^2$ it is clear that the orthogonal complement of the kernel of 
$\mathbf P_{\uA}(\uA)(I_k)^\star$ is the biggest projection with which we can multiply $\mathbf P_{\uA}$ on the left.

\subsubsection{Linear change of coordinates}\label{coordchange}
Assume that $\mathbf u\in GL_n(\mathbb R)$. We consider the $n$-tuple of selfadjoint operators $\underline{\gimel}_\theta=\mathbf u\underline{a}_\tau$. 
That is, $\underline{\gimel}_\theta=({\gimel}_1,{\gimel}_2,\dots,{\gimel}_n)$ is a tuple of selfadjoint variables such that $\gimel_j=\sum_{k=1}^n\mathbf u_{jk}a_k$,
so that the law $\theta\colon\mathbb C\langle\underline{X}\rangle\to\mathbb C$ of the $n$-tuple $\underline{\gimel}_\theta$ is given by $\theta
(P(X_1,X_2,\dots,X_n))=\tau(P(\sum_{k=1}^n\mathbf u_{1k}X_k,\sum_{k=1}^n\mathbf u_{2k}X_k,\dots,\allowbreak\sum_{k=1}^n\mathbf u_{nk}X_k))$. It is obvious 
that $W^*(\theta)\simeq W^*(\tau)$ as von Neumann algebras (in fact it is equally obvious the case for any invertible -- under composition -- nc map 
$\mathbf f$ defined on an nc neighborhood of $\underline{a}_\tau$ so that $\mathbf f(\underline{a}_\tau)$ is a tuple of selfadjoints). One can identify 
$\mathbb C\langle\uX\rangle$ with the Fock space of the $n$-dimensional Hilbert space $\mathbb C^n$ via the identification of $1$ with the vacuum vector
$\Omega$ and $X_j$ with $e_j,$ the $j^{\rm th}$ canonical basis  vector of $\mathbb C^n$. The product $X_iX_j$ corresponds to $e_i\otimes e_j$ etc. With 
this view, one sees immediately that the effect of $\mathbf u$ on $\underline{a}_\tau$ extends to all of $\mathbb C\langle\underline{a}_\tau\rangle$ 
by acting on $(\mathbb C^n)^{\otimes p}$ as $\mathbf u^{\otimes p}$. Thus, action on the subspace $\mathbb C\Omega\oplus\bigoplus_{j=1}^d
(\mathbb C^n)^{\otimes j}$ is done via ${\rm Id}_{\mathbb C\Omega}\oplus\bigoplus_{j=1}^d\mathbf u^{\otimes j}$. Obviously, this assigns (easily 
computable but long) coefficients ${\bf u}_{w,v}\in\mathbb R$ in the block-matrix expression of the operator: 
$\mathbf u^{\otimes j}e_w=\sum_{|v|=j}({\bf u}^{\otimes j})_{w,v}e_v.$ Given a matrix-valued polynomial $\mathbf P(\uX)=
\sum_{w\in\langle\uX\rangle_d}c_w\otimes \uX^w$, the change of variable $\underline{\gimel}_\theta=\mathbf u\underline{a}_\tau$ translates into 
\begin{align*}
\mathbf P(\underline{\gimel}_\theta)&=\mathbf P(\mathbf u\underline{a}_\tau)=\!\sum_{w\in\langle\uX\rangle_d}\!c_w\otimes (\mathbf u\underline{a}_\tau)^w\\
&=c_0\otimes 1+\sum_{j=1}^d[I_k\otimes\mathbf u^{\otimes j}]\!\!\sum_{w\in\langle\uX\rangle_j\setminus\langle\uX\rangle_{j-1}}\!\!c_w\otimes\underline{a}_\tau^w\\
&=c_0\otimes 1+\sum_{j=1}^d\sum_{w\in\langle\uX\rangle_j\setminus\langle\uX\rangle_{j-1}}\!\!c_w\otimes\left(\sum_{v
\in\langle\uX\rangle_j\setminus\langle\uX\rangle_{j-1}}({\bf u}^{\otimes j})_{w,v}\underline{a}_\tau^v\right)\\
&=c_0\otimes 1+\sum_{j=1}^d\sum_{v\in\langle\uX\rangle_j\setminus\langle\uX\rangle_{j-1}}\underbrace{\left[\sum_{w\in\langle\uX\rangle_j\setminus\langle\uX\rangle_{j-1}}\!\!({\bf u}^{\otimes j})_{w,v}c_w\right]}_{=d_v}\otimes\underline{a}_\tau^v=\mathbf Q(\underline{a}_\tau),
\end{align*}
where $\mathbf Q(\uX)=\sum_{v\in\langle\uX\rangle_d}\!d_v\otimes\uX^v$. Tautologically $\|\mathbf P(\underline{\gimel}_\theta)\|=1\iff\|\mathbf Q(\underline{a}_\tau)\|=1$. For
given $\uA\in\mathbb M_k(\mathbb C)^n$, assume $\mathbf Q$ is a polynomial on which the maximum in \eqref{330} (resp. \eqref{331}) is achieved for $\underline{a}_\tau$.
Thus, from the above, 
\begin{align*}
\Phi^\bullet_{\tau,d}(\uA)&=\|\mathbf Q(\uA)(I_k)\|_\bullet^2=\left\|\sum_{v\in\langle\uX\rangle_d}\!d_v\uA^v\right\|_\bullet^2\\
&=\left\|c_0+\sum_{j=1}^d\sum_{v\in\langle\uX\rangle_j\setminus\langle\uX\rangle_{j-1}}\left[\sum_{w\in\langle\uX\rangle_j\setminus\langle\uX\rangle_{j-1}}\!\!
({\bf u}^{\otimes j})_{w,v}c_w\right]\underline{A}^v\right\|_\bullet^2\\
&=\left\|c_0+\sum_{j=1}^d\sum_{w\in\langle\uX\rangle_j\setminus\langle\uX\rangle_{j-1}}\!\!c_w\left[\sum_{v
\in\langle\uX\rangle_j\setminus\langle\uX\rangle_{j-1}}({\bf u}^{\otimes j})_{w,v}\underline{A}_\tau^v\right]\right\|_\bullet^2\\
&=\left\|\sum_{w\in\langle\uX\rangle_d}c_w\left(\mathbf u\uA\right)^w\right\|_\bullet^2=\left\|\mathbf P(\mathbf u\uA)(I_k)\right\|_\bullet^2\le\Phi^\bullet_{\theta,d}(\mathbf u\uA).
\end{align*}
Since the transformation $\mathbf u$ is invertible, it follows that strict inequality in the last relation would lead to 
the existence of a $\tilde{\mathbf P}$ on which the supremum is realized,
and then the corresponding $\tilde{\mathbf Q}$ would achieve the obvious contradiction to the above 
$\|\tilde{\mathbf Q}(\uA)(I_k)\|_\bullet^2>\Phi^\bullet_{\tau,d}(\uA)$. Thus,
\begin{equation}\label{haromszashetven}
\Phi^\bullet_{\tau,d}(\uA)=\Phi^\bullet_{\theta,d}(\mathbf u\uA),\quad\uA\in\mathbb M_k(\mathbb C)^n.
\end{equation}
Clearly, faithfulness of $\tau$ does not play a role in this argument; moreover, faithfulness of $\tau$ is equivalent to faithfulness of $\theta$.
The case of invertible, analytic, but nonlinear $\mathbf u$ is more involved and leads to an acceptably strong result only for the objects 
defined in \eqref{320}--\eqref{321} below. We are not concerned with it at this moment.

\subsubsection{Growth}\label{grow}
The functions $\underline{A}\mapsto\frac1{2d}\log\Phi^\bullet_{\tau,d}(\underline{A})$ have logarithmic growth
at infinity, in the sense that there exists a real constant $c_{d,\bullet,\tau}$ such that
$$
{\frac1{2d}\log\Phi^\bullet_{\tau,d}(\underline{A})}\le{\log^+\|\uA\|_\bullet+c_{d,\bullet,\tau}}.
$$
(As it is usual, $\log^+t=\max\{\log t,0\}$.)
It is remarkably convenient that from this point of view, the two norms we use are the same. Indeed, $\|\uA\|_2^2={\rm tr}_k(A_1^\star A_1+\cdots+A_n^\star A_n)\le
n\max_{1\le i\le n}{\rm tr}_k(A_i^\star A_i)\le n\max_{1\le i\le n}\|A_i\|^2\!=n\|\uA\|^2$ and $\|\uA\|^2\!=\max_{1\le i\le n}\|A_i\|^2\!\le k\max_{1\le i\le n}
{\rm tr}_k(A_i^\star A_i)\le k{\rm tr}_k(A_1^\star A_1+\cdots A_n^\star A_n)=k\|\uA\|_2^2,$ so that $\frac{1}{\sqrt{n}}\|\uA\|_2\le\|\uA\|\le\sqrt{k}\|\uA\|_2$.
By taking log, we obtain $\log\|\uA\|_2-\log\sqrt{n}\le\log\|\uA\|\le\log\|\uA\|_2+\log\sqrt{k}$. Thus, we will compare everything to $\log\|\uA\|_2$.
Now, for any given polynomial $\mathbf P\in\mathbb M_k(\mathbb C\langle\uX\rangle_d)\setminus\mathbb M_k(\mathbb C\langle\uX\rangle_{d-1})$, it 
follows from the classical theory that 
$$
\limsup_{\|\uA\|_2\to\infty}\frac{\log{\rm tr}_k\left(\mathbf P(\uA)(I_k)^\star\mathbf P(\uA)(I_k)\right)}{2d\log\|\uA\|_2}\leq 1.
$$
Indeed, the numerator is the logarithm of a classical polynomial of degree $2d$ on $\mathbb C^{nk^2}$. According to Section \ref{comp},
$\{\log{\rm tr}_k\left(\mathbf P(\uA)(I_k)^\star\mathbf P(\uA)(I_k)\right)\colon{\bf P}\in\mathbb M_k(\mathbb C\langle\uX\rangle_d)$, $\|{\bf P}(\underline{a}_\tau)\|\le1\}$
is a locally bounded family, so \cite[Theorem 1.6, Appendix B]{ST} guarantees that $\frac1{2d}\log\Phi^2_{\tau,d}(\underline{A})$ satisfies the same growth condition.
By the same theorem, together with the description of the operator norm from Section \ref{sspsh}, so does $\frac1{2d}\log\Phi^\infty_{\tau,d}(\underline{A})$.

\subsubsection{Single variable bounds}\label{1var}
As noted before, our hypothesis that $\tau$ is positive guarantees that $\underline{a}_\tau$ can be chosen as a tuple of selfadjoint
random variables, $\underline{a}_\tau^\star=\underline{a}_\tau$, which generate a tracial $C^*$-algebra $(C^*(\tau),{\rm tr})$. For each coordinate of
$\underline{a}_\tau$, we consider, via functional calculus, the identification of the $C^*$-subalgebra it generates with the space of continuous functions on its spectrum, endowed with
the restriction of tr. Without loss of generality, pick the first coordinate, ${a}_{\tau,1}$, for our analysis. Denote by $\mu_1$ its distribution with respect to $\tau$,
which is a classical, compactly supported Borel probability measure on $\mathbb R$. Assume that supp$(\mu_1$) is an infinite set in
the real line, so that $C^*(\tau|_{\mathbb C\langle X_1\rangle})=C(\sigma(a_{\tau,1}),\mu_1)$ is an infinite-dimensional Abelian $C^*$-algebra. For each $k\in\mathbb N$, we may
choose $k$ distinct points in $\sigma(a_{\tau,1})$; the evaluation of an element from $C(\sigma(a_{\tau,1}),\mu_1)$ at those points yields an algebra morphism onto
$\mathbb C^k$, viewed as the diagonal of $\mathbb M_k(\mathbb C)$. In particular, for given matrix size $k$, one may pick pairwise distinct points 
$\mathcal P=\{p_1,\dots,p_k\}\subset\sigma(a_{\tau,1})$, and define the linear map
$\mathcal C_\mathcal P\colon C(\sigma(a_{\tau,1}),\mu_1)\to\mathbb C^k$ by $f\mapsto \mathcal C_\mathcal P(f)=(f(p_1),\dots,f(p_k))$. This is tautologically a surjective
$C^*$-algebra morphism, and in particular it is completely contractive, completely positive, and unit preserving.
Thus, for any $N\in\mathbb N$, the map ${\rm id}_N\otimes\mathcal C_\mathcal P$ is a positive contraction. In particular, if
$\mathbf P(X)=\sum_{w\in\langle X\rangle_d}c_w\otimes X^w\in\mathbb M_k(\mathbb C\langle X\rangle_d)$, then $
\left\|\sum_{w\in\langle X\rangle_d}c_w\otimes\mathcal C_\mathcal P(a_{\tau,1}^w)\right\|=\left\|({\rm id}_k\otimes\mathcal C_\mathcal P)(\mathbf P(a_{\tau,1}))\right\|\le
\left\|\mathbf P(a_{\tau,1})\right\|$. However, $\mathcal C_\mathcal P$ is an algebra morphism, so that $\mathcal C_\mathcal P(a_{\tau,1}^w)=\mathcal C_\mathcal P(a_{\tau,1})^w
$ for all $w\in\mathbb N$ (or $w\in\mathbb Z$, if $a_{\tau,1}$ is invertible). 
As noted after equations \eqref{HS}--\eqref{H}, this, together with the fact that ${\rm tr}_k(I_k)=1$, guarantees that 
$\mathrm{tr}_k(\mathbf P(\mathcal C_\mathcal P(a_{\tau,1}))(I_k)^\star\mathbf P(\mathcal C_\mathcal P(a_{\tau,1}))(I_k))\leq\left\|\mathbf P(a_{\tau,1})\right\|^2$
for any collection $\mathcal P\subset\sigma(a_{\tau,1})$ of $k$ mutually disjoint points. In particular, we have established that for any selfadjoint matrix $A\in\mathbb M_k
(\mathbb C)$ whose spectrum is included in the spectrum of $a_{\tau,1}$, the condition $\left\|\mathbf P(a_{\tau,1})\right\|\le1$ implies automatically
$\Phi^2_{\tau|_{\mathbb C\langle X_1\rangle},d}(A)\le1$.

\bigskip
We define
\begin{align}\label{320}
\Phi_{\tau}^2(\uA)=\left[\limsup_{d\to\infty}\left(\Phi_{\tau,d}^2(\uA)\right)^{1/d}\right]^*
,\\
\Phi_{\tau}^\infty(\uA)=\left[\limsup_{d\to\infty}\left(\Phi_{\tau,d}^\infty(\uA)\right)^{1/d}\right]^*
.\label{321}
\end{align}
(The asterisk denotes the upper semicontinuous regularization: the upper semicontinuous regularization $f^*$ for a function $f$ is defined by
$f^*(\zeta)=\limsup_{z\to\zeta}f(z)$, and is the smallest  upper semicontinuous function with the property that $f^*\ge f$.) We also define
\begin{align}\label{Sigma2}
\Sigma_{\tau}^2(\uA)=\left[\sup_{d\in\mathbb N}\left(\Phi_{\tau,d}^2(\uA)\right)^{1/d}\right]^*
,\\
\Sigma_{\tau}^\infty(\uA)=\left[\sup_{d\in\mathbb N}\left(\Phi_{\tau,d}^\infty(\uA)\right)^{1/d}\right]^*
.\label{Sigma8}
\end{align}
Clearly, $\Sigma_{\tau}^\bullet(\uA)\ge\Phi_{\tau}^\bullet(\uA)$, $\bullet\in\{2,\infty\},\uA\in\mathbb M_k(\mathbb C)^n,k\in\mathbb N$.
It is known (see, for instance, \cite[Equation (2.17) in Appendix B2]{ST}) that $\Sigma$ and $\Phi$ coincide in the classical 
case. 
That might not happen for us, as it can be easily seen from \eqref{mess} below and the comments following it.
Before enumerating below the consequences of the above-listed properties of $\Phi^\bullet_{\tau,d}$ on these functions, 
we recall (see Section \ref{PSHintro}) that a set is {\em pluripolar} if it is contained in the $-\infty$-level set of a non-constant plurisubharmonic function. 
A property holds {\em quasi everywhere} (q.e.) in a set $S$ if it holds on $S \backslash E$ where $E$ is a pluripolar set.
\begin{remark}
The following statements hold:
\label{lemma:bounds}
\begin{enumerate}
\item 
\begin{equation}
\Phi_{\tau}^2(\uA)\le\Phi_{\tau}^\infty(\uA),\ \ \Sigma_{\tau}^2(\uA)\le\Sigma_{\tau}^\infty(\uA)\ \ \text{q.e.},\quad\uA\in\mathbb M_k(\mathbb C)^n,k\in\mathbb N;
\end{equation}
\item As limits of plurisubharmonic functions (see \ref{sspsh}), according to \cite[Proposition 1.40 and Corollary 1.47]{GZ} or \cite[Theorem 2.9.14 and Proposition 2.9.17]{Klimek}, 
$\Sigma_{\tau}^2(\uA),\Sigma_{\tau}^\infty(\uA),\Phi_{\tau}^2(\uA)$, and $\Phi_{\tau}^\infty(\uA)$ are plurisubharmonic as functions on the complex Euclidean space 
$\mathbb C^{nk^2}$. More precisely, if the family $\uA\mapsto\left(\Phi_{\tau,d}^\infty(\uA)\right)^{1/d},d\in\mathbb N,$ is locally bounded from the above, then all 
the above functions are plurisubharmonic and the set where they differ from their upper semicontinuous regularization has Lebesgue measure zero in $\mathbb C^{nk^2}$ 
$($in fact it is of zero inner/outer capacity \cite[Corollary 4.43]{GZ}$)$. 
\end{enumerate}
In the following we assume implicitly that $\uA\mapsto\left(\Phi_{\tau,d}^\infty(\uA)\right)^{1/d}$ is 
locally bounded from the above in order for most of our statements to be non-vacuous (this happens in numerous cases as shown later on in Example \ref{L(F2)}). 
\begin{enumerate}
\setcounter{enumi}{2}
\item \label{max} If we denote $\tau_1=\tau|_{\mathbb C\langle X_1,\dots,X_n\rangle},\tau_2=\tau|_{\mathbb C\langle X_{n+1},\dots,X_{n+m}\rangle}$, then
$\Phi_{\tau}^\bullet(\underline{A},\underline{B})\ge\max\{\Phi_{\tau_1}^\bullet(\underline{A}),\Phi^\bullet_{\tau_2}(\underline{B})\}$,
$\Sigma_{\tau}^\bullet(\underline{A},\underline{B})\ge\max\{\Sigma_{\tau_1}^\bullet(\underline{A}),\Sigma^\bullet_{\tau_2}(\underline{B})\}$
for any $(\underline{A},\underline{B})\in\mathbb M_k(\mathbb C)^n\times\mathbb M_k(\mathbb C)^m,\bullet\in\{2,\infty\}$. This follows 
immediately from Section \ref{monot}.
\item By \ref{ssunitary}, for any $k\in\mathbb N,\bullet\in\{2,\infty\}$, we have
\begin{equation} \label{UnitaryInvariance}
\Phi_{\tau}^\bullet(\uA)=\Phi_{\tau}^\bullet(U\uA U^\star),\ \ \Sigma_{\tau}^\bullet(\uA)=\Sigma_{\tau}^\bullet(U\uA U^\star),\quad \uA\in\mathbb M_k(\mathbb C)^n,U\in\mathbb M_k(\mathbb C),UU^\star=I_k.
\end{equation}
As an immediate consequence, we record
$$
\Phi_{\tau}^\bullet\left(\begin{bmatrix}\uA&\uB\\\uB&\uA^\star\end{bmatrix}\right)=\Phi_{\tau}^\bullet\left(\begin{bmatrix}\uA^\star&\uB\\\uB&\uA\end{bmatrix}\right),\quad 
\Sigma_{\tau}^\bullet\left(\begin{bmatrix}\uA&\uB\\\uB&\uA^\star\end{bmatrix}\right)=\Sigma_{\tau}^\bullet\left(\begin{bmatrix}\uA^\star&\uB\\\uB&\uA\end{bmatrix}\right),
$$
for all $\uA,\uB\in\mathbb M_k(\mathbb C)^n,k\in\mathbb N,\bullet\in\{2,\infty\}.$
\item If $\mathbf u\in GL_n(\mathbb R)$, $\tau$ is the distribution of $\underline{a}_\tau$, and $\theta$ is the distribution of $\mathbf u\underline{a}_\tau$, then by
\eqref{haromszashetven},
\begin{equation}
\Phi^\bullet_{\tau}(\uA)=\Phi^\bullet_{\theta}(\mathbf u\uA),\ \ \Sigma^\bullet_{\tau}(\uA)=\Sigma^\bullet_{\theta}(\mathbf u\uA)\quad\uA\in\mathbb M_k(\mathbb C)^n, k\in\mathbb N,\bullet\in\{2,\infty\}.
\end{equation}
\end{enumerate}
\end{remark}

Let us record some obvious properties of $\Phi^\bullet_\tau,\bullet\in\{2,\infty\}$.
\begin{enumerate}

\item $\Phi^\bullet_\tau(\uA)\ge1$ for all $\bullet\in\{2,\infty\},\uA\in\mathbb M_k(\mathbb C)^n,k\in\mathbb N$.  This follows easily by fixing a 
polynomial $\mathbf P$ in the right hand side of either of \eqref{330}, \eqref{331} and noting that $\Phi_{\tau,d}^\bullet(\uA)\ge
\|\mathbf P(\uA)(I_k)\|_\bullet^2$; we conclude $\Phi_\tau^\bullet(\uA)\ge\lim_{d\to\infty}\|\mathbf P(\uA)(I_k)\|_\bullet^{2/d}=1$. The conclusion for $\Sigma$ is obvious;

\item In the classical theory of several complex variables, it is known (see \cite[Appendix B.2, (2.17)]{ST}) that the sequence $\{\Phi_{\tau,d}^2(\underline{A})^\frac1d\}_{d\in
\mathbb N}$ is increasing for any $\underline{A}\in\mathbb M_1(\mathbb C)^n\simeq\mathbb C^n$. When the matrix size $k>1$, this monotonicity statement may turn out to
be false. This fact is essentially caused by the existence of nilpotent elements in $\mathbb M_k(\mathbb C)$ whenever $k\ge2$. Surprisingly, a counterexample can be found
even when $n=1$. Let $\mathbf P(X)=\sum_{j=0}^dc_j\otimes X^j=\begin{bmatrix}P_{1,1}(X) & P_{1,2}(X) \\ P_{2,1}(X) & P_{2,2}(X)\end{bmatrix}\in
\mathbb M_2(\mathbb C\langle X\rangle)$ and $A=\begin{bmatrix}0 & t \\ 0 & 0 \end{bmatrix}$. Then 
\begin{align*}
\mathbf P(A)(I_2)&=\mathbf P\left(\begin{bmatrix}0 & t \\ 0 & 0 \end{bmatrix}\right)\left(\begin{bmatrix}1 & 0 \\ 0 & 1 \end{bmatrix}\right)
=\sum_{j=0}^dc_j\begin{bmatrix}0 & t \\ 0 & 0 \end{bmatrix}^j\\
&=\begin{bmatrix}(c_0)_{1,1} & (c_0)_{1,2}+t(c_1)_{1,1}\\ (c_0)_{2,1} & (c_0)_{2,2}+t(c_1)_{2,1}\end{bmatrix}=\begin{bmatrix}P_{1,1}(0) & tP_{1,1}'(0)+P_{1,2}(0) \\ P_{2,1}(0) & tP_{2,1}'(0)+P_{2,2}(0) \end{bmatrix}.
\end{align*}
Pick $\tau$ to be Wigner's semicircle law, $\tau(P(X))=\frac2\pi\int_{-1}^1P(x)\sqrt{1-x^2}\,{\rm d}x$ (in fact, for our example, any distribution given by a
probability measure with infinite and compact support containing zero would do). By definition, $\Phi_{\tau,d}^2(A)$ is the maximum of ${\rm tr}_2(\mathbf P(A)(I_2)^\star
\mathbf P(A)(I_2))$ on the set of polynomials $\mathbf P(X)\in\mathbb M_2(\mathbb C\langle X\rangle)$ satisfying $\|\mathbf P(a_\tau)\|\le1$. 
With the above notation, $\mathrm{tr}_2(\mathbf P(A)(I_2)^\star\mathbf P(A)(I_2))=\frac{1}{2}\left(|P_{1,1}(0)|^2\right.+|tP_{1,1}'(0)+P_{1,2}(0)|^2+|P_{2,1}(0)|^2+
\left.|tP_{2,1}'(0)+P_{2,2}(0)|^2\right).$ Now, $P_{i,j}$ are classical polynomials in one variable, to which Bernstein's estimate \cite[(33), Section 4.8.7]{Tim} applies:
$|P_{i,j}'(0)|\le{\rm deg}P_{i,j}\|P_{i,j}\|\le d\|P_{i,j}\|$, where $\|\cdot\|$ is the sup, or $C^*$-algebra, norm on $[-1,1]$ (the estimate is optimal: an example can be found in the
Chebyshev orthogonal polynomials of the first kind, given by $T_d(x)=\cos(d\arccos(x))$, or $T_d(\cos\theta)=\cos(d\theta),\theta\in[0,\pi]$, so that $\|T_d\|=1$, $|T_d'(0)|=
d|\sin(d\pi/2)|=d$ whenever $d$ is odd). The requirement $\|\mathbf P(a_\tau)\|\le1$ imposes
$|P_{i,j}(0)|\le1$, $1\le i,j\le2$, so that $\mathrm{tr}_2(\mathbf P(A)(I_2)^\star\mathbf P(A)(I_2))<1+(1+td)^2$ holds for all degrees $d\in\mathbb N$ and parameters
$t\in\mathbb C.$ On the other hand, by choosing the other entries to be zero, one sees that the estimate $\mathrm{tr}_2(\mathbf P(A)(I_2)^\star\mathbf P(A)(I_2))\ge\frac{|tP_{2,
1}'(0)|^2}{2}\ge(t^2d^2-1)/2$ holds at least on a subsequence of degrees $d$. Since both sequences $(1+(1+td)^2)^{1/d}$ and $(\frac{t^2d^2-1}{2})^{1/d}$ are both eventually
decreasing and tend to the same limit as $d \to \infty$, one obtains that
\begin{equation}\label{mess}
\left\{\Phi^2_{\tau,d}(\uA)^\frac1d\right\}_{d\in\mathbb N}\text{ might not be an increasing sequence}.
\end{equation}
Thus, if one replaces limsup in  \eqref{320}--\eqref{321} by sup as in  \eqref{Sigma2}--\eqref{Sigma8}, one may get a different function whenever $k>1$ (and in fact one is 
guaranteed the existence of a context in which a different result occurs before performing the upper semicontinuous regularization).

\item $\{\uA\in\mathbb M_k(\mathbb C)^n\colon\Phi_\tau^\bullet(\uA)>1\}\neq\varnothing$ for all $k>1$. To find such an example, one considers $d>1$ and
a selfadjoint polynomial $P\in\mathbb C\langle\uX\rangle_d$ (identified with $I_k\otimes P\in\mathbb M_k(\mathbb C\langle\uX\rangle_d)$) such that
$\|P(\underline{a}_\tau)\|=1$ (for instance, $P(\underline{X})=\left(\frac{X_1}{\|{a_\tau}_1\|}\right)^d$), and one picks a selfadjoint tuple 
$\uA$ such that $\|P(\uA)\|_\bullet>1$ (if $P(\underline{X})=\left(\frac{{X}_1}{\|{a_\tau}_1\|}\right)^d$, then any tuple $\uA$ whose
first coordinate is a positive matrix whose spectrum is included in $(\|{a_\tau}_1\|,+\infty)$ will do). Then $\Phi_\tau^\bullet(\uA)
\ge\|(P^\star P)^{2^n}(\uA)\|^\frac{1}{2^nd}_\bullet\ge\|(P^\star P)(\uA)\|_\bullet^\frac1d>1$ for all sufficiently large $n$.


\item $\Phi^2_\tau(\uA\oplus\uB)\ge\max\{\Phi^2_\tau(\uA),\Phi^2_\tau(\uB)\},\uA\in\mathbb M_{k_1}(\mathbb C)^n,\uB\in
\mathbb M_{k_2}(\mathbb C)^n$. Indeed, this follows easily from the fact that if $\mathbf P_j\in\mathbb M_{k_j}(\mathbb C\langle\uX\rangle_d)$, 
$j=1,2$, are maximizing elements in \eqref{330} corresponding to $\uA$ and $\uB$, respectively, then $\mathbf P_1\oplus\mathbf P_2$ still satisfies 
$\|(\mathbf P_1\oplus\mathbf P_2)(\underline{a}_\tau)\|\le1$ and thus $\Phi^2_{\tau,d}(\uA\oplus\uB)\ge\text{tr}_{k_1+k_2}(|\mathbf P_1\oplus
\mathbf P_2)(\uA\oplus\uB)(I_{k_1+k_2})|^2)=\frac{k_1}{k_1+k_2}\text{tr}_{k_1}(|\mathbf P_1(\uA)(I_{k_1})|^2)+\frac{k_2}{k_1+k_2}\text{tr}_{k_2}
(|\mathbf P_2(\uB)(I_{k_2})|^2)=\frac{k_1}{k_1+k_2}\Phi^2_{\tau,d}(\uA)+\frac{k_2}{k_1+k_2}\Phi^2_{\tau,d}(\uB)$. If one of $\Phi^2_\tau(\uA),$ 
$\Phi^2_\tau(\uB)$ is infinite, then so is $\Phi^2_\tau(\uA\oplus\uB)$. Assume that $\Phi^2_\tau(\uA)>\Phi^2_\tau(\uB)$. Pick a subsequence 
$\{d_\ell\}_{\ell\in\mathbb N}$ on which $[\Phi^2_{\tau,d_\ell}(\uA)]^\frac{1}{d_\ell}\ge[\Phi^2_{\tau,d_\ell}(\uB)]^\frac{1}{d_\ell}$ and 
$\Phi^2_\tau(\uA)$ is achieved. 
Then 
\begin{align*}
\quad \ \qquad {\displaystyle\limsup_{d\to\infty}}(\Phi^2_{\tau,d}(\uA\oplus\uB))^{\frac{1}{d}} & \ge
{\displaystyle\limsup_{\ell\to\infty}}(\Phi^2_{\tau,d_\ell}(\uA\oplus\uB))^{\frac{1}{d_\ell}}\\
& \ge
{\displaystyle\lim_{\ell\to\infty}}(\Phi^2_{\tau,d_\ell}(\uA))^{\frac{1}{d_\ell}}\left(\frac{k_1}{k_1+k_2}+\frac{k_2}{k_1+k_2}
\frac{\Phi^2_{\tau,d}(\uB)}{\Phi^2_{\tau,d}(\uA)}\right)^{\frac{1}{d_\ell}}\\
& =\Phi^2_\tau(\uA),
\end{align*}
since the term under the parenthesis is 
bounded from above by 1. The case $\Phi^2_\tau(\uA)=\Phi^2_\tau(\uB)$ is equally easy 
(one picks a subsequence as above, but on which one of the two is no less than the other). The same may not hold for $\Sigma^2$ (the sup might be achieved at a
finite $d$).

\item $\Phi^\infty_\tau(\uA\oplus\uB)\ge\max\{\Phi^\infty_\tau(\uA),\Phi^\infty_\tau(\uB)\}$. As in the previous item, for each given $d$ 
we find maximizers $\mathbf P_j\in\mathbb M_{k_j}(\mathbb C\langle\uX\rangle_d)$, $j=1,2$, and then 
$\Phi^\infty_{\tau,d}(\uA\oplus\uB)\ge\|(\mathbf P_1\oplus\mathbf P_2)(\uA\oplus\uB)(I_{k_1+k_2})\|=\max\{\|\mathbf P_1(\uA)(I_{k_1})\|,
\|\mathbf P_2(\uB)(I_{k_2})\|\}$. The rest of the argument is identical to the one above.

\end{enumerate}

\subsection{Level sets of the noncommutative Christoffel-Darboux polynomial: comparison of $\Phi^\bullet_\tau$ and $\kappa_\tau$}\label{trentaquatro}
According the second item of Remark \ref{Remark33}, we find $\kappa_{\tau,d}(\uA,\uA^\star)(I_k)$ as the maximum of $\mathbf P(\uA)(I_k)^\star 
\mathbf P(\uA)(I_k)$ under the condition that $({\rm Id}_{\Mbb_k(\mathbb C)}\otimes\tau)(\mathbf{PP}^\star)=I_k$. For the element, call it 
$\mathbf P_{\uA}$, which satisfies this equality, we have tautologically $\|{\bf P}_{\uA}(\underline{a}_\tau)\|_{L^2(\text{tr}_k\otimes\text{tr})}=
\text{tr}_k(I_k)=1$. In order to productively connect this to our versions of Siciak's extremal function, we need
\begin{definition}\label{BM}
Consider a unit-preserving positive bounded linear map $\varphi\colon\mathbb C\langle\uX\rangle\!\to\mathbb C$ and a tuple of noncommutative 
selfadjoint random variables $\underline{a}_\varphi$ in the von Neumann algebra generated by $\varphi$ via the GNS construction such that the law of 
$\underline{a}_\varphi$ is $\varphi$. We say that $\varphi$ satisfies the {\em Bernstein-Markov property} if for all $d\in\mathbb N$ there exists an 
$M_d=M(\tau)_d\ge0$ $($possibly depending on $k)$ such that
$$
\|\mathbf P(\underline{a}_\varphi)\|\leq 
M_d(\mathrm{tr}_k\otimes\varphi)(\mathbf P(\underline{a}_\varphi)^\star\mathbf P(\underline{a}_\varphi))^\frac12=M_d\|\mathbf P
(\underline{X})\|_{L^2(\mathrm{tr}_k\otimes\varphi)}
$$
for all $\mathbf P\in\Mbb_k(\mathbb C\langle\uX\rangle_d)$ of degree $d$, and
$$
\limsup_{d\to\infty}M_d^\frac1d=1 \text{ for each }k.
$$
\end{definition}
As $\mathrm{tr}_k\otimes\varphi$ is a state, it has norm equal to one, so that $(\mathrm{tr}_k\otimes\varphi)(Y^\star Y)\le\|Y^\star Y\|=\|Y\|^2$, i.e.
$\|Y\|_{L^2(\mathrm{tr}_k\otimes\varphi)}\le\|Y\|$. Thus, automatically $M_d\ge1$ for all $d,k$.

{\color{green}
}

This actually {\em is} the classical definition of the  Bernstein-Markov property: if $k=1$ and $\varphi$ is a probability measure on a Euclidean space, then 
the above is the expression of the Bernstein-Markov property in terms of the (real) random variables whose joint distribution is $\varphi$. For numerous 
details on this matter, we refer to \cite{BLPW,Bloom}. In this paper we will only consider the case when $\varphi=\tau$ is a positive bounded tracial 
state (usually, but not always, faithful), and thus $\underline{a}_\varphi=\underline{a}_\tau$ is a tuple of selfadjoint operators in a finite von Neumann algebra.

The conditions in the definition have some obvious reformulations: given any sequence of polynomials $\{\mathbf P_d\}_{d\in\mathbb N},
\deg\mathbf P_d\le d$,
\begin{align}\label{limsup}
\limsup_{d\to\infty}&\left(\frac{\|\mathbf P_d(\underline{a}_\tau)\|}{\|\mathbf P_d(\underline{a}_\tau)\|_{L^2(\text{tr}_k\otimes\tau)}}
\right)^{\frac1d}\le1;\\
\forall\varepsilon>0\ \exists C(\varepsilon,\tau)>0\text{ such that }&\|\mathbf P_d(\underline{a}_\tau)\|\le C(\varepsilon,\tau)(1+\varepsilon)^d
\|\mathbf P_d(\underline{a}_\tau)\|_{L^2(\text{tr}_k\otimes\tau)}.
\end{align}
As of now, we are not aware of this notion having previously appeared in free probability or free analysis. In particular, it seems far from obvious 
the precise way it is related to established notions from free probability such as free entropy, conjugate variables, or even free independence. 
However, unlike in classical pluripotential theory, the Bernstein-Markov property appears to be somewhat less ubiquitous in the (highly)
noncommutative context. We shall thus introduce an obvious modification:

\begin{definition}\label{BM-C}
Consider a bounded positive unital trace $\tau\colon\mathbb C\langle\uX\rangle\to\mathbb C$ and a tuple of noncommutative 
selfadjoint random variables $\underline{a}_\tau$ in the von Neumann algebra $W^*(\tau)$ generated by $\tau$ via the GNS construction such that the law of 
$\underline{a}_\tau$ is $\tau$. 
Given a real constant $\mathfrak C \in [1, \infty)$, we say that $\tau$ satisfies the $\mathfrak C$-{\em Bernstein-Markov property} if for all $d\in\mathbb N$ there exists an 
$M_d=M(\tau)_d\ge0$ $($possibly depending on $k)$ such that
$$
\|\mathbf P(\underline{a}_\tau)\|\leq 
M_d(\mathrm{tr}_k\otimes\mathrm{tr})(\mathbf P(\underline{a}_\tau)^\star\mathbf P(\underline{a}_\tau))^\frac12=M_d\|\mathbf P
(\underline{X})\|_{L^2(\mathrm{tr}_k\otimes\tau)}
$$
for all $\mathbf P\in\Mbb_k(\mathbb C\langle\uX\rangle_d)$ of degree $d$, and
$$
\limsup_{d\to\infty}M_d^\frac1d=\mathfrak C \text{ for each }k.
$$
\end{definition}


The next proposition states the main results of this section 
(compare with \cite[Lemma 2.9]{beckermann2020perturbations} and its proof):

\begin{proposition}\label{Siciak}
For any given bounded 
positive trace $\tau$ on $\mathbb C\langle\uX\rangle$, we have
\begin{align}
\limsup_{d\to\infty}\|\kappa_{\tau,d}(\uA,\uA^\star)(I_k)^\frac1d\|&\ge\Phi^\infty_\tau(\uA),\quad\sup_{d\in\mathbb N}\|\kappa_{\tau,d}(\uA,\uA^\star)(I_k)^\frac1d\|\ge\Sigma^\infty_\tau(\uA),\\
\limsup_{d\to\infty}\mathrm{tr}_k(\kappa_{\tau,d}(\uA,\uA^\star)(I_k))^\frac1d&\ge\Phi^2_{\tau}(\uA),\quad\sup_{d\in\mathbb N}\mathrm{tr}_k(\kappa_{\tau,d}(\uA,\uA^\star)(I_k))^\frac1d\ge\Sigma^2_{\tau}(\uA),
\end{align}
for all $ \uA\in\mathbb M_k(\mathbb C)^n,k\in\mathbb N.$
If $\tau$ satisfies the $\mathfrak C$-Bernstein-Markov property \ref{BM-C}, then 
\begin{align}
\mathfrak C^2\Phi^\infty_\tau(\uA)\ge\limsup_{d\to\infty}\|\kappa_{\tau,d}(\uA,\uA^\star)(I_k)^\frac1d\|&\ge\Phi^\infty_\tau(\uA),\label{Cinfty}\\
\mathfrak C^2\Phi^2_\tau(\uA)\ge\limsup_{d\to\infty}{\rm tr}_k(\kappa_{\tau,d}(\uA,\uA^\star)(I_k))^\frac1d&\ge\Phi^2_{\tau}(\uA),\label{C2}\\
\Sigma^\infty_\tau(\uA)\sup_{d\in\mathbb N}M_d^{\frac2d}\ge\sup_{d\in\mathbb N}\|\kappa_{\tau,d}(\uA,\uA^\star)(I_k)^\frac1d\|&\ge\Sigma^\infty_\tau(\uA),\\
\Sigma^2_\tau(\uA)\sup_{d\in\mathbb N}M_d^{\frac2d}\ge\sup_{d\in\mathbb N}{\rm tr}_k(\kappa_{\tau,d}(\uA,\uA^\star)(I_k))^\frac1d&\ge\Sigma^2_\tau(\uA),\ \,
 \uA\in\mathbb M_k(\mathbb C)^n,k\in\mathbb N.
\end{align}
\end{proposition}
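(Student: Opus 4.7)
First, for the four lower bounds I would argue from Remark \ref{Remark33}(2)'s Schur-complement domination
\[
\mathbf P(\uA)(I_k)^\star({\rm Id}_{\mathbb M_k(\mathbb C)}\otimes\tau)(\mathbf P\mathbf P^\star)^{-1}\mathbf P(\uA)(I_k)\preceq\kappa_{\tau,d}(\uA,\uA^\star)(I_k).
\]
For any $\mathbf P\in\mathbb M_k(\mathbb C\langle\uX\rangle_d)$ with $\|\mathbf P(\underline{a}_\tau)\|\le 1$, applying the positive map ${\rm Id}_{\mathbb M_k(\mathbb C)}\otimes\tau$ to $\mathbf P(\underline{a}_\tau)\mathbf P(\underline{a}_\tau)^\star\preceq I_k\otimes 1$ yields $({\rm Id}\otimes\tau)(\mathbf P\mathbf P^\star)\preceq I_k$, hence its inverse is $\succeq I_k$, so that $\mathbf P(\uA)(I_k)^\star\mathbf P(\uA)(I_k)\preceq\kappa_{\tau,d}(\uA,\uA^\star)(I_k)$. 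Taking operator norm and normalized trace and then the supremum over feasible $\mathbf P$ produces the pointwise bounds $\Phi_{\tau,d}^\infty(\uA)\le\|\kappa_{\tau,d}(\uA,\uA^\star)(I_k)\|$ and $\Phi_{\tau,d}^2(\uA)\le{\rm tr}_k(\kappa_{\tau,d}(\uA,\uA^\star)(I_k))$. Raising to the power $1/d$ and passing to $\limsup$ (respectively $\sup$) in $d$ delivers the first four inequalities.

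Second, for the four upper bounds under the $\mathfrak C$-Bernstein--Markov hypothesis I would exhibit an explicit $\Phi_{\tau,d}^\bullet$-competitor built from the minimizer of Theorem \ref{th:christoffeloptim}. Since $\kappa_{\tau,d}(\uA,\uA^\star)(I_k)\succeq I_k$ (see the observation just before Theorem \ref{th:christoffeloptim}), the Christoffel function $\Lambda_{\tau,d}(\uA)$ is invertible. Starting from the minimizer $\mathbf P_d(\uX)=\sum_{w\in\langle\uX\rangle_d}\Lambda_{\tau,d}(\uA)P_w(\uA)\otimes P_w^\star(\uX)$, which satisfies $\mathbf P_d(\uA)(I_k)=I_k$ and $({\rm Id}\otimes\tau)(\mathbf P_d\mathbf P_d^\star)=\Lambda_{\tau,d}(\uA)$, I set $\mathbf Q_d(\uX):=(\Lambda_{\tau,d}(\uA)^{-1/2}\otimes 1)\mathbf P_d(\uX)\in\mathbb M_k(\mathbb C\langle\uX\rangle_d)$. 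Direct computation gives
\[
({\rm Id}\otimes\tau)(\mathbf Q_d\mathbf Q_d^\star)=\Lambda_{\tau,d}(\uA)^{-1/2}\Lambda_{\tau,d}(\uA)\Lambda_{\tau,d}(\uA)^{-1/2}=I_k
\]
and $\mathbf Q_d(\uA)(I_k)^\star\mathbf Q_d(\uA)(I_k)=\Lambda_{\tau,d}(\uA)^{-1}=\kappa_{\tau,d}(\uA,\uA^\star)(I_k)$. Applying ${\rm tr}_k$ to the first identity gives $\|\mathbf Q_d(\uX)\|^2_{L^2({\rm tr}_k\otimes\tau)}={\rm tr}_k(I_k)=1$, so Definition \ref{BM-C} yields $\|\mathbf Q_d(\underline{a}_\tau)\|\le M_d$. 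Hence $\mathbf Q_d/M_d$ is feasible in both $\Phi_{\tau,d}^\infty$ and $\Phi_{\tau,d}^2$, producing the pointwise inequalities $\|\kappa_{\tau,d}(\uA,\uA^\star)(I_k)\|\le M_d^2\Phi_{\tau,d}^\infty(\uA)$ and ${\rm tr}_k(\kappa_{\tau,d}(\uA,\uA^\star)(I_k))\le M_d^2\Phi_{\tau,d}^2(\uA)$.

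Third, to pass to the stated asymptotic bounds I raise to $1/d$. For the $\mathfrak C^2\Phi_\tau^\bullet$ estimates, given $\varepsilon>0$ the hypothesis $\limsup_d M_d^{1/d}=\mathfrak C$ gives $M_d^{2/d}\le(\mathfrak C+\varepsilon)^2$ eventually, so
\[
\limsup_d\|\kappa_{\tau,d}(\uA,\uA^\star)(I_k)\|^{1/d}\le(\mathfrak C+\varepsilon)^2\limsup_d\Phi_{\tau,d}^\infty(\uA)^{1/d}\le(\mathfrak C+\varepsilon)^2\Phi_\tau^\infty(\uA),
\]
and letting $\varepsilon\downarrow 0$ closes the bound; the ${\rm tr}_k$ version is identical, and for the $\Sigma$ bounds one uses sub-multiplicativity of $\sup$ to produce the factor $\sup_d M_d^{2/d}$. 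The main technical obstacle is bookkeeping the upper-semicontinuous regularization in the first four inequalities: the direct pointwise arguments only force $\Phi_\tau^\bullet(\uA)\le[\limsup_d\|\kappa_{\tau,d}\|^{1/d}]^\ast(\uA)$ and $\Sigma_\tau^\bullet(\uA)\le[\sup_d\|\kappa_{\tau,d}\|^{1/d}]^\ast(\uA)$, rather than the raw $\limsup$/$\sup$ that appear on the left-hand side of the proposition; these agree off a pluripolar set by standard pluripotential theory, consistent with the conventions spelled out in Remark \ref{lemma:bounds}(2).
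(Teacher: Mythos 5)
Your proposal is correct and follows essentially the same route as the paper: the Schur-complement domination from Remark \ref{Remark33}(2) for the four lower bounds, and a renormalized Christoffel minimizer made feasible through the Bernstein--Markov hypothesis for the four upper bounds. Your explicit competitor $\mathbf Q_d=(\Lambda_{\tau,d}(\uA)^{-1/2}\otimes 1)\mathbf P_d$ is exactly the polynomial $\mathbf P_{\uA}$ with $({\rm Id}_{\Mbb_k(\mathbb C)}\otimes\tau)(\mathbf P_{\uA}\mathbf P_{\uA}^\star)=I_k$ that the paper invokes (rendering invertibility automatic, as you note), and your use of sub-multiplicativity of $\sup$ for the $\Sigma$ estimates replaces the paper's case split on whether $\sup_d\Phi^\bullet_{\tau,d}(\uA)^{1/d}$ is attained at a finite $d$ with a shorter argument; the upper-semicontinuous-regularization caveat you flag at the end applies equally to the paper's own write-up and is handled by the conventions of Remark \ref{lemma:bounds}.
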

Before proving the proposition, we give some intuition for the Bernstein-Markov property by explaining how to verify it in practice.
Observe first that in order to verify the $\mathfrak C$-Bernstein-Markov condition, one only needs to check it in the case 
$k=1$. Indeed, if $\mathbf P(\underline{a}_\tau)=\left(P_{i,j}(\underline{a}_\tau)\right)_{i,j=1}^k$, then 
$\mathbf P^\star(\underline{a}_\tau)=\left(P_{j,i}^\star(\underline{a}_\tau)\right)_{i,j=1}^k$, so that 
$\mathbf P(\underline{a}_\tau)\mathbf P^\star(\underline{a}_\tau)=
\left(\sum_{l=1}^kP_{i,l}(\underline{a}_\tau)P^\star_{j,l}(\underline{a}_\tau)\right)_{i,j=1}^k$. Then
$$
\|\mathbf P
(\underline{X})\|_{L^2(\mathrm{tr}_k\otimes\tau)}^2=\frac1k\sum_{i,l=1}^k\mathrm{tr}\left(P_{i,l}(\underline{a}_\tau)P^\star_{i,l}(\underline{a}_\tau)\right)
=\frac1k\sum_{i,l=1}^k\|P_{i,l}(\underline{X})\|^2_{L^2(\tau)}.
$$
We write
$$
\frac{\|\mathbf P(\underline{a}_\tau)\|}{\|\mathbf P(\underline{X})\|_{L^2(\mathrm{tr}_k\otimes\tau)}}
=\frac{\sqrt{k}\|\mathbf P(\underline{a}_\tau)\|}{\left[\sum_{i,l=1}^k\|P_{i,l}(\underline{X})\|^2_{L^2(\tau)}\right]^\frac12}
\le\frac{\sqrt{k}\sum_{i,l=1}^k\|P_{i,l}(\underline{a}_\tau)\|}{\left[\sum_{i,l=1}^k\|P_{i,l}(\underline{X})\|^2_{L^2(\tau)}\right]^\frac12}.
$$
Assume $\mathfrak C=\mathfrak C_k$ depends on $k$. By considering diagonal polynomials, it is obvious that $\{\mathfrak C_k\}_{k}$ can only be non-decreasing.
Consider a subsequence of polynomials $\mathbf P=\mathbf P_d,d\in\mathbb N$  being the degree of $\mathbf P$,
so that $\lim_{d\to\infty}\left(\frac{\|\mathbf P(\underline{a}_\tau)\|}{\|\mathbf P(\underline{X})\|_{L^2(\mathrm{tr}_k\otimes\tau)}}\right)^\frac1d=\mathfrak C_k$.
That means,
\begin{eqnarray}
\quad\mathfrak C_k&=& \lim_{d\to\infty}\left(\frac{\|\mathbf P(\underline{a}_\tau)\|}{\|\mathbf P(\underline{X})\|_{L^2(\mathrm{tr}_k\otimes\tau)}}\right)^\frac1d \nonumber
\le\lim_{d\to\infty}\left(\frac{\sqrt{k}\sum_{i,l=1}^k\|P_{i,l}(\underline{a}_\tau)\|}{\left[\sum_{i,l=1}^k\|P_{i,l}(\underline{X})\|^2_{L^2(\tau)}\right]^\frac12}\right)^\frac1d\\
& = & \lim_{d\to\infty}\left(\frac{\|P_{i_0,l_0}(\underline{a}_\tau)\|}{\|P_{i_0,l_0}(\underline{X}_\tau)\|_{L^2(\tau)}}\right)^\frac1d
\left(\frac{1+\sum_{(i,l)\neq(i_0,l_0)}\frac{\|P_{i,l}(\underline{a}_\tau)\|}{\|P_{i_0,l_0}(\underline{a}_\tau)\|}}{\left[1+\sum_{(i,l)\neq(i_0,l_0)}\frac{\|P_{i,l}(\underline{X})\|^2_{L^2(\tau)}}{\|P_{i_0,l_0}(\underline{X})\|^2_{L^2(\tau)}}\right]^\frac12}\right)^\frac1d.\label{ceka}
\end{eqnarray}
This holds for any pair of indices $(i_0,l_0)\in\{1,\dots,k\}^2$ corresponding to a non-zero entry. Obviously, for each such pair, the sequence 
$\left(\frac{\|P_{i_0,l_0}(\underline{a}_\tau)\|}{\|P_{i_0,l_0}(\underline{X})\|_{L^2(\tau)}}\right)^\frac1d$ indexed by the degrees 
$d$ of the polynomials $\mathbf P$ has an uppermost limit point. We choose the pair $(i_0,l_0)\in\{1,\dots,k\}^2$ for which this uppermost
limit point is the largest; if there are two or more which reach this largest upper limit, we choose $(i_0,l_0)$ such that 
the proportion $\frac{\|P_{i_0,l_0}(\underline{a}_\tau)\|}{\|P_{i_0,l_0}(\underline{X})\|_{L^2(\tau)}}$, at least on a 
subsequence on which the upper limit is reached, grows no slower than $\frac{\|P_{i,l}(\underline{a}_\tau)\|}{\|P_{i,l}(\underline{X})\|_{L^2(\tau)}}$
for any other pair $(i,l)$. We pass to a subsequence of $d$'s such that this uppermost limit is reached along a  sequence which is fastest
growing among all pairs $(i,l)$. By an abuse of notation, we still write
limit after $d$, that is, $\lim_{d\to\infty}\left(\frac{\|P_{i_0,l_0}(\underline{a}_\tau)\|}{\|P_{i_0,l_0}(\underline{X})\|_{L^2(\tau)}}\right)^\frac1d$, 
for the limit along this subsequence. We claim that the limit points of
$$\left(\frac{1+\sum_{(i,l)\neq(i_0,l_0)}\frac{\|P_{i,l}(\underline{a}_\tau)\|}{\|P_{i_0,l_0}(\underline{a}_\tau)\|}}{\left[1+\sum_{(i,l)\neq(i_0,l_0)}\frac{\|P_{i,l}(\underline{X})\|^2_{L^2(\tau)}}{\|P_{i_0,l_0}(\underline{X})\|^2_{L^2(\tau)}}\right]^\frac12}\right)^\frac1d$$ along this same subsequence are bounded from above by one.
Indeed, the only way for this to fail is if at least one $\frac{\|P_{i_1,l_1}(\underline{a}_\tau)\|}{\|P_{i_0,l_0}(\underline{a}_\tau)\|}$ tends very fast to 
infinity while all of $\frac{\|P_{i,l}(\underline{X})\|_{L^2(\tau)}}{\|P_{i_0,l_0}(\underline{X})\|_{L^2(\tau)}}$ either stay bounded or tend much
slower to infinity. However,
$$
\frac{\|P_{i_1,l_1}(\underline{a}_\tau)\|}{\|P_{i_0,l_0}(\underline{a}_\tau)\|}=\frac{\|P_{i_1,l_1}(\underline{a}_\tau)\|}{\|P_{i_1,l_1}(\underline{X})\|_{L^2(\tau)}}
\cdot\frac{\|P_{i_1,l_1}(\underline{X})\|_{L^2(\tau)}}{\|P_{i_0,l_0}(\underline{X})\|_{L^2(\tau)}}\cdot
\frac{\|P_{i_0,l_0}(\underline{X})\|_{L^2(\tau)}}{\|P_{i_0,l_0}(\underline{a}_\tau)\|},
$$
and, according to our choice of $(i_0,l_0)$ and of the subsequence, for $d$ large enough, 
$\frac{\|P_{i_1,l_1}(\underline{a}_\tau)\|}{\|P_{i_1,l_1}(\underline{X})\|_{L^2(\tau)}}
\cdot\frac{\|P_{i_0,l_0}(\underline{X})\|_{L^2(\tau)}}{\|P_{i_0,l_0}(\underline{a}_\tau)\|}<2$. Thus,  
$\frac{\|P_{i_1,l_1}(\underline{a}_\tau)\|}{\|P_{i_0,l_0}(\underline{a}_\tau)\|}$ cannot grow more than twice as fast as
$\frac{\|P_{i_1,l_1}(\underline{X})\|_{L^2(\tau)}}{\|P_{i_0,l_0}(\underline{X})\|_{L^2(\tau)}}$. We conclude that 
$$\left(\frac{1+\sum_{(i,l)\neq(i_0,l_0)}\frac{\|P_{i,l}(\underline{a}_\tau)\|}{\|P_{i_0,l_0}(\underline{a}_\tau)\|}}{\left[1+\sum_{(i,l)\neq(i_0,l_0)}\frac{\|P_{i,l}(\underline{X})\|^2_{L^2(\tau)}}{\|P_{i_0,l_0}(\underline{X})\|^2_{L^2(\tau)}}\right]^\frac12}\right)^\frac1d$$ 
is indeed bounded from above by $1$. By \eqref{ceka}, we conclude that $\mathfrak C_k\leq\mathfrak C_1$ for all $k\in\mathbb N$, so $\mathfrak C_k=\mathfrak C$
indeed does not depend on $k$. It should also be noted that if $\tau$ satisfies the $\mathfrak C$-Bernstein-Markov property, then $\mathfrak C\le\sup_dM_d^\frac1d<\infty$,
and the first inequality may be strict.

Second, in order to verify the $\mathfrak C$-Bernstein-Markov property, it is enough to verify it on positive polynomials:
there exists $Q_d(\uX)\in\mathbb C\langle\uX\rangle_{2d},d\in\mathbb N$, with $Q_d \succeq 0$ in $\mathbb C\langle\uX\rangle$, 
\begin{equation}
\limsup_{d\to\infty}\left(\frac{\|Q_d(\underline{a}_\tau)\|}{\text{tr}(Q_d(\underline{a}_\tau))}\right)^{\frac1{2d}}=\mathfrak C, 
\end{equation}
with no value larger than $\mathfrak C$ being attainable. (Here we performed the substitution $Q_d = P_d^\star P_d$.)  
For the Bernstein-Markov property ($\mathfrak C=1)$, it is enough to simply show that 
for {\em any} sequence $Q_d$ as above, $\limsup_{d\to\infty}\left(\frac{\|Q_d(\underline{a}_\tau)\|}{\text{tr}(Q_d(\underline{a}_\tau))}\right)^{\frac1{2d}}=1$.
As an aside, we note that for any $d\in\mathbb N$ the optimization problem 
\[
\max\{\|Q_d(\underline{a}_\tau)\|\colon\text{tr}(Q_d(\underline{a}_\tau))\le1,Q_d\succeq 0,
Q_d(\uX)\in\mathbb C\langle\uX\rangle_d\} \] 
 is convex, on a convex finite-dimensional set, hence solvable in principle.

These facts should probably arouse no astonishment in view of Theorem \ref{th:christoffeloptim}. Indeed, the connection
of $\kappa_{\tau,d}$ to the Bernstein-Markov property (respectively the $\mathfrak C$-Bernstein-Markov property)
becomes obvious in view of this theorem and Remark \ref{Remark33} following it: recall that 
$$
\kappa_{\tau,d}(\uA,\uA^\star)(I_k)=\max_{\mathbf P\in\mathbb M_k(\mathbb C\langle\uX\rangle_d)}\mathbf P(\uA)(I_k)^*
({\rm Id}_{\Mbb_k(\mathbb C)}\otimes\tau)(\mathbf{PP}^\star)^{-1}\mathbf P(\uA)(I_k),
$$
where we assume the obvious conditions of invertibility. As noted in Remark \ref{Remark33}(2), the set of polynomials on which the
above maximum is achieved is invariant under multiplication to the left with a scalar invertible $k\times k$ complex matrix, so in the above 
we may pick a polynomial $\mathbf P$ so that $\|\mathbf P(\underline{a}_\tau)\|=1$ and 
$({\rm Id}_{\Mbb_k(\mathbb C)}\otimes\tau)(\mathbf{PP}^\star)=\|\mathbf P(\underline{X})\|_{L^2({\rm tr}_k\otimes\tau)}^2I_k$ (i.e. the $k\times k$ complex matrix 
$({\rm Id}_{\Mbb_k(\mathbb C)}\otimes\tau)(\mathbf{PP}^\star)$ is a multiple of the identity). Now consider a
polynomial $\tilde{\mathbf P}$ such that $\|\tilde{\mathbf P}(\underline{a}_\tau)\|=1$ and $\frac{1}{\|\tilde{\mathbf P}(\underline{a}_\tau)\|_{L^2({\rm tr}_k\otimes\tau)}}
=\sup_{\mathbf P\in\mathbb M_k(\mathbb C\langle\uX\rangle_d)}\frac{\|\mathbf P(\underline{a}_\tau)\|}{\|\mathbf P(\underline{X})\|_{L^2({\rm tr}_k\otimes\tau)}}:=M_d$.
Of course, $\kappa_{\tau,d}(\uA,\uA^\star)(I_k)\succeq\tilde{\mathbf P}(\uA)(I_k)^*
({\rm Id}_{\Mbb_k(\mathbb C)}\otimes\tau)(\tilde{\mathbf P}\tilde{\mathbf P}^\star)^{-1}\tilde{\mathbf P}(\uA)(I_k)$ if 
$({\rm Id}_{\Mbb_k(\mathbb C)}\otimes\tau)(\tilde{\mathbf P}\tilde{\mathbf P}^\star)$ is invertible. If not, we pick an $\epsilon>0$
arbitrarily small and choose $\tilde{\mathbf P}_\epsilon$ approximating $\tilde{\mathbf P}$ such that $\|\tilde{\mathbf P}_\epsilon(\underline{a}_\tau)\|=1$, 
$({\rm Id}_{\Mbb_k(\mathbb C)}\otimes\tau)(\tilde{\mathbf P}_\epsilon\tilde{\mathbf P}_\epsilon^\star)$ is invertible, and
$\frac{1}{\|\tilde{\mathbf P}_\epsilon(\underline{X})\|_{L^2({\rm tr}_k\otimes\tau)}}>M_d-\epsilon$. Then the above majorization by 
$\kappa_{\tau,d}(\uA,\uA^\star)(I_k)$ holds for $\tilde{\mathbf P}_\epsilon$ as well. This allows us to write
$$
\kappa_{\tau,d}(\uA,\uA^\star)(I_k)=\max_{\mathbf P\in\mathbb M_k(\mathbb C\langle\uX\rangle_d)}\frac{\mathbf P(\uA)(I_k)^*
\mathbf P(\uA)(I_k)}{\|\mathbf P(\underline{X})\|_{L^2({\rm tr}_k\otimes\tau)}^2},
$$
imposing the same invertibility condition as for the previously displayed relation.

\begin{proof}[Proof of Proposition \ref{Siciak}]
With Definition \ref{BM-C} in hand, we assume that $\tau$ does satisfy a $\mathfrak C$-Bernstein-Markov condition. Then 
$\left\|\frac{1}{M_d}\mathbf P_{\uA}(\underline{a}_\tau)\right\|\le1$ (recall the meaning of $\mathbf P_{\underline{A}}$ from the
very beginning of Section \ref{trentaquatro}), so that $\Phi_{\tau,d}^\infty(\uA)
\geq\left\|\frac{1}{M_d}\mathbf P_{\uA}(\underline{A})(I_k)\right\|^2$, which, according to Remark \ref{Remark33}(2), implies
$$
\Phi^\infty_\tau(\uA)\ge\limsup_{d\to\infty}\left\|\frac{1}{M_d}\mathbf P_{\uA}(\underline{A})(I_k)\right\|^\frac{2}{d}
\ge\frac{1}{\displaystyle\limsup_{d\to\infty}M_d^\frac2d}\limsup_{d\to\infty}\|\kappa_{\tau,d}(\uA,\uA^\star)(I_k)^\frac1d\|.
$$
Applying the hypothesis that $\tau$ satisfies the $\mathfrak C$-Bernstein-Markov property yields
\begin{equation}\label{Si}
\mathfrak C^2\Phi^\infty_\tau(\uA)\ge\limsup_{d\to\infty}\|\kappa_{\tau,d}(\uA,\uA^\star)(I_k)^\frac1d\|.
\end{equation}
Conversely, if we consider the embodiment of $\kappa_{\tau,d}(\uA,\uA^\star)(I_k)$ as the maximum after $\mathbf P$ of $\mathbf P(\uA)(I_k)^*
({\rm Id}_{\Mbb_k(\mathbb C)}\otimes\tau)(\mathbf{PP}^\star)^{-1}\mathbf P(\uA)(I_k)$ 
as in Theorem \ref{th:christoffeloptim}, then we may renormalize $\mathbf P$ with a scalar constant so that 
$\|\mathbf P(\underline{a}_\tau)\|=1$. 
It follows that 
$({\rm Id}_{\Mbb_k(\mathbb C)}\otimes\tau)(\mathbf{PP}^\star\!)^{-1}\!\!\succeq\! I_k$, so 
\begin{align*}
\|\kappa_{\tau,d}(\uA,\uA^\star\!)\!(I_k)\!\|\!&=\!\|\mathbf P(\uA)(I_k)^*
({\rm Id}_{\Mbb_k(\mathbb C)}\otimes\tau)(\mathbf{PP}^\star)^{-1}\mathbf P(\uA)(I_k)\| \\
& \ge\|\mathbf P(\uA)(I_k)^*\mathbf P(\uA)(I_k)\|
\end{align*}
for all $\mathbf P$, so $\|\kappa_{\tau,d}(\uA,\uA^\star)(I_k)\|\ge\Phi_{\tau,d}^\infty(\uA)$. This proves that
\begin{equation}\label{Sinfty}
\limsup_{d\to\infty}\|\kappa_{\tau,d}(\uA,\uA^\star)(I_k)^\frac1d\|\ge\Phi_{\tau}^\infty(\uA),\quad \uA\in\mathbb M_k(\mathbb C)^n,k\in\mathbb N.
\end{equation}
Under the assumption of $\mathfrak C$-Bernstein-Markov property for $\tau$, equation \eqref{Si} guarantees that 
$\mathfrak C^2\Phi^\infty_\tau(\uA)\ge\limsup_{d\to\infty}\|\kappa_{\tau,d}(\uA,\uA^\star)(I_k)^\frac1d\|\ge\Phi_{\tau}^\infty(\uA)$,
which, for $\mathfrak C=1$ (i.e. the Bernstein-Markov property), yields
\begin{equation}\label{Sinfinity}
\Phi^\infty_\tau(\uA)=\limsup_{d\to\infty}\|\kappa_{\tau,d}(\uA,\uA^\star)(I_k)^\frac1d\|,\quad \uA\in\mathbb M_k(\mathbb C)^n,k\in\mathbb N.
\end{equation}

Let us consider next the other noncommutative version of Siciak's extremal function, namely $\Phi^2_\tau$.  For fixed $d$, we have seen above that 
we may renormalize any polynomial $\mathbf P$ in Remark \ref{Remark33}(2) so that $\|\mathbf P(\underline{a}_\tau)\|=1$, which yields 
$\kappa_{\tau,d}(\uA,\uA^\star)(I_k)\succeq\mathbf P(\uA)(I_k)^*({\rm Id}_{\Mbb_k(\mathbb C)}\otimes\tau)(\mathbf{PP}^\star)^{-1}
\mathbf P(\uA)(I_k).$ As $({\rm Id}_{\Mbb_k(\mathbb C)}\otimes\tau)(\mathbf{PP}^\star)=({\rm Id}_{\Mbb_k(\mathbb C)}\otimes\text{tr})
(\mathbf P(\underline{a}_\tau)\mathbf P(\underline{a}_\tau)^\star),$ and $({\rm Id}_{\Mbb_k(\mathbb C)}\otimes\text{tr})$
is a conditional expectation and hence (completely) positive, $\|\mathbf P(\underline{a}_\tau)\|\le1\iff\mathbf P(\underline{a}_\tau)
\mathbf P(\underline{a}_\tau)^\star\preceq I_k\otimes1\implies({\rm Id}_{\Mbb_k(\mathbb C)}\otimes\text{tr})(\mathbf P(\underline{a}_\tau)
\mathbf P(\underline{a}_\tau)^\star)\preceq I_k\implies({\rm Id}_{\Mbb_k(\mathbb C)}\otimes\text{tr})(\mathbf P(\underline{a}_\tau)
\mathbf P(\underline{a}_\tau)^\star)^{-1}\succeq I_k$, which yields the conclusion
$\kappa_{\tau,d}(\uA,\uA^\star)(I_k)\succeq\mathbf P(\uA)(I_k)^*\mathbf P(\uA)(I_k)$. Thus, 
$\text{tr}_k(\kappa_{\tau,d}(\uA,\uA^\star)(I_k))\ge\text{tr}_k(\mathbf P(\uA)(I_k)^*\mathbf P(\uA)(I_k))$ for all $\mathbf P$ as above, so that 
$\text{tr}_k(\kappa_{\tau,d}(\uA,\uA^\star)(I_k))\ge\Phi^2_{\tau,d}(\uA)$. As $\uA$ is arbitrary, we obtain
$$
\limsup_{d\to\infty}\text{tr}_k(\kappa_{\tau,d}(\uA,\uA^\star)(I_k))^\frac1d\ge\Phi^2_{\tau}(\uA).
$$
Under the assumption of the $\mathfrak C$-Bernstein-Markov property for $\tau$, Remark \ref{Remark33}(2) allows us to consider a polynomial $\mathbf P_{\uA}\in
\mathbb M_k(\mathbb C\langle\uX\rangle_d)$ such that $\kappa_{\tau,d}(\uA,\uA^\star)(I_k)=\mathbf P_{\uA}(\uA)(I_k)^\star\mathbf P_{\uA}(\uA)
(I_k)$ and $({\rm Id}_{\mathbb M_k(\mathbb C)}\otimes\text{tr})(\mathbf P_{\uA}(\underline{a}_\tau)\mathbf P_{\uA}(\underline{a}_\tau)^\star)
=I_k$. This naturally yields $\Phi^2_{\tau,d}(\uA)^{1/d}\ge\text{tr}_k\left(\frac{\mathbf P_{\uA}(\uA)(I_k)^\star\mathbf P_{\uA}(\uA)(I_k)}{\|\mathbf 
P_{\uA}(\underline{a}_\tau)\|^2}\right)^\frac1d=\frac{1}{\|\mathbf P_{\uA}(\underline{a}_\tau)\|^{2/d}}\text{tr}_k(\kappa_{\tau,d}(\uA,\uA^\star)
(I_k))^\frac1d$. Definition \ref{BM-C} \ guarantees \  that \ $\ \frac{1}{\|\mathbf P_{\uA}(\underline{a}_\tau)\|^{2/d}}\ \ge\ 
\frac{1}{M_d^{2/d}(\text{tr}_k\otimes\text{tr})(\mathbf P_{\uA}(\underline{a}_\tau)^\star\mathbf P_{\uA}(\underline{a}_\tau))^{1/d}}\ 
=\newline\noindent\frac{1}{M_d^{2/d}\text{tr}_k(I_k)^{1/d}}=\frac{1}{M_d^{2/d}}$. As $\frac{1}{\limsup_d M^{2/d}}=\frac{1}{\mathfrak C^2}$, one has $\Phi^2_{\tau}(\uA)=
\limsup_d\Phi^2_{\tau,d}(\uA)^{1/d}\ge\frac{1}{\mathfrak C^2}\limsup_d\text{tr}_k(\kappa_{\tau,d}(\uA,\uA^\star)(I_k))^\frac1d$. Together with the above-displayed relation, 
it yields the following estimate for distributions $\tau$ that satisfy the $\mathfrak C$-Bernstein-Markov inequality
\begin{equation}\label{S2}
\mathfrak C^2\Phi^2_{\tau}(\uA)\ge\limsup_{d\to\infty}\text{tr}_k(\kappa_{\tau,d}(\uA,\uA^\star)(I_k))^\frac1d\ge\Phi^2_{\tau}(\uA)
\end{equation}
and the following equality for distributions that satisfy the Bernstein-Markov property:
\begin{equation}\label{Si2}
\Phi^2_{\tau}(\uA)=\limsup_{d\to\infty}\text{tr}_k(\kappa_{\tau,d}(\uA,\uA^\star)(I_k))^\frac1d,\quad \uA\in\mathbb M_k(\mathbb C)^n,k\in\mathbb N.
\end{equation}

Finally, let us establish the relation between the Christoffel-Darboux kernel of $\tau$ and the functions $\Sigma_\tau^\bullet,\bullet\in\{2,\infty\}$, introduced 
in \eqref{Sigma2}--\eqref{Sigma8}. First observe that if $\sup_{d\in\mathbb N}(\Phi^\bullet_{\tau,d}(\underline{A}))^{1/d}$ is {\em not} achieved at a 
finite $d$, then $\Sigma_\tau^\bullet=\Phi^\bullet_\tau$, and relations \eqref{Si}, \eqref{Sinfty}, and/or \eqref{S2} hold. Thus, assume that 
$\Sigma_\tau^\bullet(\uA)=\sup_{d\in\mathbb N}(\Phi^\bullet_{\tau,d}(\uA))^\frac{1}{d}=\max_{d\in\mathbb N}(\Phi^\bullet_{\tau,d}(\underline{A}))^\frac{1}{d}=
\Phi^\bullet_{\tau,d_0}(\underline{A}))^{1/d_0}$. As before, we pick $\mathbf P_0\in\mathbb M_k(\mathbb C\langle\uX\rangle_{d_0})$ such that 
$\|\mathbf P_0(\underline{a}_\tau)\|=1$ and $\Sigma_\tau^\bullet(\uA)=\Phi^\bullet_{\tau,d_0}(\underline{A}))^\frac{1}{d_0}=\|\mathbf P_0(\underline{A})(I_k)\|_2^2$. Then 
$\kappa_{\tau,d_0}(\uA,\uA^\star)(I_k)\succeq\mathbf P_0(\underline{A})(I_k)^\star({\rm Id}_{\mathbb M_k(\mathbb C)}\otimes\mathrm{tr})(\mathbf P_0^\star(\underline{a}_\tau)
\mathbf P_0(\underline{a}_\tau))^{-1}\mathbf P_0(\underline{A})(I_k)\succeq\mathbf P_0(\underline{A})(I_k)^\star\mathbf P_0(\underline{A})(I_k)$, which immediately
implies ${\rm tr}_k(\kappa_{\tau,d_0}(\uA,\uA^\star)(I_k))^\frac{1}{d_0}\ge\Sigma^2_\tau(\underline{A}),$
$\|\kappa_{\tau,d_0}(\uA,\uA^\star)(I_k)\|^\frac{1}{d_0}\ge\Sigma^\infty_\tau(\underline{A})$. 

For the opposite inequality, we again take a polynomial 
$\mathbf P_0\in\mathbb M_k(\mathbb C\langle\uX\rangle_{d_0})$, but this time chosen to achieve $\kappa_{\tau,d_0}$: $({\rm Id}_{\mathbb M_k(\mathbb C)}\otimes\mathrm{tr})
(\mathbf P_0^\star(\underline{a}_\tau)\mathbf P_0(\underline{a}_\tau))=\|\mathbf P_0\|^2_{L^2({\rm tr}_k\otimes\tau)}I_k$ and $\kappa_{\tau,d_0}(\uA,\uA^\star)(I_k)=
\mathbf P_0(\underline{A})(I_k)^\star({\rm Id}_{\mathbb M_k(\mathbb C)}\otimes\mathrm{tr})(\mathbf P_0^\star(\underline{a}_\tau)
\mathbf P_0(\underline{a}_\tau))^{-1}\mathbf P_0(\underline{A})(I_k)
=\frac{\mathbf P_0(\underline{A})(I_k)^\star\mathbf P_0(\underline{A})(I_k)}{\|\mathbf P_0\|^2_{L^2({\rm tr}_k\otimes\tau)}}.$ We assume that 
$\tau$ satisfies the $\mathfrak C$-Bernstein-Markov property for some $\mathfrak C\in[1,+\infty)$. Then, as noted before, 
$$
\mathfrak C\leq\sup_{d\in\mathbb N}M_d^\frac1d<\infty
$$
depends only on $\tau$. We thus have
$\|\kappa_{\tau,d_0}(\uA,\uA^\star)(I_k)\|^\frac{1}{d_0}=
\frac{\|\mathbf P_0(\underline{A})(I_k)^\star\mathbf P_0(\underline{A})(I_k)\|^\frac{1}{d_0}}{\|\mathbf P_0\|^{2/d_0}_{L^2({\rm tr}_k\otimes\tau)}},$
${\rm tr}_k(\kappa_{\tau,d_0}(\uA,\uA^\star)(I_k))^{1/d_0}=
\frac{{\rm tr}_k(\mathbf P_0(\underline{A})(I_k)^\star\mathbf P_0(\underline{A})(I_k))^{1/d_0}}{\|\mathbf P_0\|^{2/d_0}_{L^2({\rm tr}_k\otimes\tau)}}.$
Since by definition $M_{d_0}\ge\frac{\|\mathbf P_0(\underline{a}_\tau)\|}{\|\mathbf P_0\|_{L^2({\rm tr}_k\otimes\tau)}}$, one has
${\displaystyle\|\kappa_{\tau,d_0}(\uA,\uA^\star)(I_k)\|^\frac{1}{d_0}\le\sup_{d\in\mathbb N}M_d^\frac2d}
\left[\frac{\|\mathbf P_0(\underline{A})(I_k)^\star\mathbf P_0(\underline{A})(I_k)\|}{\|\mathbf P_0(\underline{a}_\tau)\|^{2}}\right]^\frac{1}{d_0}\le
{\displaystyle\sup_{d\in\mathbb N}M_d^{\frac2d}\Sigma^\infty_\tau(\uA)},$
${\rm tr}_k(\kappa_{\tau,d_0}(\uA,\uA^\star)(I_k))^\frac{1}{d_0}\le
{\displaystyle\sup_{d\in\mathbb N}M_d^{\frac2d}\Sigma^2_\tau(\uA)}.$
\end{proof}

It should be noted that the above proposition does {\em not} state that the quantities involved are finite: neither of $\Phi^\infty_\tau,\Phi^2_\tau,\Sigma_\tau^\infty,\Sigma_\tau^2$ 
is guaranteed to be finite anywhere. The usefulness of this proposition in the study of the noncommutative Christoffel-Darboux kernel $\kappa_\tau$ 
depends on how well we understand the Siciak functions $\Phi$. In the classical pluripotential theory of several complex variables, the importance of 
$\Phi$ springs from the fact that it usually equals Green's function associated to the support of the classical distribution $\tau$. In the noncommutative 
context, we do not even have a notion of support for $\tau.$ We will go around this inconvenient fact (and also indirectly define a notion of support - or, 
in a certain sense, closure of support - for $\tau$) by making use of the classical theory of plurisubharmonic functions, which, as noted above, applies to 
the functions $\Phi$.

We have noted that Definition \ref{BM} actually generalizes the classical Bernstein-Markov property. Let us put this in context through an example:

\begin{example}\label{Wigner}
Let $\tau$ be the semicircular (Wigner) distribution on $\mathbb R$: $\tau\colon\mathbb C\langle X\rangle\to\mathbb C,\tau(X^d)=\frac{2}{\pi}\int_{-1}^1t^d\sqrt{1-t^2}\,{\rm d}t$.
It is well-known (see for instance \cite{BLPW}), and easy to prove, that $\tau$ has the Bernstein-Markov property in the classical sense:
for instance, the orthonormal polynomials of $\tau$ are known to be the Chebyshev polynomials of the second kind, denoted $\{U_d\}_{d\in\mathbb N}$,
and $U_d(\cos\theta)=\frac{\sin\left((d+1)\theta\right)}{\sin\theta}$, $d\in\mathbb N$. The rate of growth of
their infinity norm is of order ${d}$ as the degree $d$ tends to infinity (more precisely, the norm is reached at the boundary of the
support, i.e. when $\theta=0$ or $\theta=\pi$, and then $|U_d(\pm1)|=d+1$). If one writes an arbitrary polynomial $P\in\mathbb C\langle X\rangle_d$
as $P(X)=\sum_{v=0}^dc_vU_v(X)$, then $\|P(a_\tau)\|\le\sum_{v=0}^d|c_v|\|U_v(a_\tau)\|\le\sum_{v=0}^d|c_v|{(v+1)}
\le(d+1)\sum_{v=0}^d|c_v|$, while $\|P(a_\tau)\|_2=\sqrt{\sum_{v=0}^d|c_v|^2}$. By the Schwarz-Cauchy inequality, 
$\sum_{v=0}^d|c_v|\le\sqrt{d+1}\sqrt{\sum_{v=0}^d|c_v|^2}$, so that 
$\|P(a_\tau)\|\le(d+1)^\frac32\|P(a_\tau)\|_2$ for all $P\in\mathbb C\langle X\rangle_d$. Since $\lim_{d\to\infty}(d+1)^\frac{3}{2d}=1$,
$\tau$ satisfies the classical Bernstein-Markov property. As shown above, this means $\tau$ satisfies the Bernstein-Markov property for 
matrix-valued polynomials as well. Now, Proposition \ref{Siciak} shows that $\Phi^\infty_\tau(A)=\limsup_{d\to\infty}\|\kappa_{\tau,d}(A,A^\star)(I_k)^\frac1d\|$
and $\Phi^2_\tau(A)=\limsup_{d\to\infty}{\rm tr}_k(\kappa_{\tau,d}(A,A^\star)(I_k))^\frac1d$, $A\in\mathbb M_k(\mathbb C)$.\\
Functional calculus shows that for fixed $k$ the growth of $\|\kappa_{\tau,d}(A,A^\star)(I_k)\|$ as $d\to\infty$ is governed by the eigenvalues of $A$,
so the existence of $\lim_{d\to\infty}\|\kappa_{\tau,d}(A,A^\star)(I_k)^\frac1d\|$ is simply guaranteed by the convergence of 
$\kappa_{\tau,d}(z,\overline{z})(1)^\frac1d$ when $z\in\mathbb C$, which is known from classical, one-variable potential theory.
Specifically, by using analytic functional calculus, one obtains a manageable expression for
$U_d(A)$ when $A\in\mathbb M_k(\mathbb C)$ is in upper triangular form $A=D+T,$ where $D$ is diagonal and $T$ strictly upper triangular. With the notation 
$D=\text{diag}(z_1,\dots,z_k),T=(t_{i,i+j})_{1\le i\le k,1\le j\le k-i}$, we make the assumptions that $z_i\neq z_l$ if $l\neq i$ (which excludes a set of Lebesgue
measure zero in $\mathbb M_k(\mathbb C)$), and that $|z_1|\le\cdots\le|z_k|$ (which implies no loss of generality, as any upper triangular matrix is unitarily 
equivalent to one whose eigenvalues are ordered increasingly). We then have the following formula for $U_d(A)$:
\begin{eqnarray*}
\lefteqn{\left(U_d(A)\right)_{i,i+j}=\left[\frac{U_d(z_i)}{z_i-z_{i+j}}+\frac{U_d(z_{i+j})}{z_{i+j}-z_i}\right]t_{i,i+j}+}\\
& &\!\!\sum_{l=1}^{j-1}\left[\sum_{i<i_1<\cdots<i_l<i+j}\left[\sum_{s=i,i_1,\dots,i_l,i+j}\!\frac{U_d(z_s)}{\displaystyle\prod_{r=i,i_1,\dots,i_l,i+j,r\neq s}\!\![z_s-z_{r}]}\right]\! 
t_{i,i_1}t_{i_1,i_2}\cdots t_{i_l,i+j}\right]\!,
\end{eqnarray*}
$\left(U_d(A)\right)_{i,i}=U_d(z_i)$, while $\left(U_d(A)\right)_{i,k}=0$ if $k<i$. The formula does not make sense if $z_r=z_s$ for some $r\neq s$, but 
the expression is Lipschitz in all its diagonal entries, with derivatives of $U_d$ occuring if two or more diagonal entries of $A$ happen to be equal (specifically,
if $q$ entries are equal, then the derivatives of order up to and including $q-1$ appear). 
The terms in the above sum can be easily seen to be indexed by the paths in the 
upper right corner between the diagonal entry $(i,i)$ and the diagonal entry $(i+j,i+j)$ which are formed by segments leaving from and returning to diagonal entries.
For instance, 
$$
U_d\left(\begin{bmatrix} z_1 & t_{1,2} & t_{1,3}\\ 0 & z_2 & t_{2,3} \\ 0 & 0 & z_3 \end{bmatrix}\right)=
$$
{\tiny{$$\!\!
\begin{bmatrix}\!U_d(z_1) & \!\!\left[\!\frac{U_d(z_1)}{z_1\!-\!z_2}\!+\!\frac{U_d(z_2)}{z_2\!-\!z_1}\!\right]\!t_{1\!,2} & 
\!\!\left[\!\frac{U_d(z_1)}{z_1\!-\!z_3}\!+\!\frac{U_d(z_3)}{z_3\!-\!z_1}\!\right]t_{1\!,3}\!+\!\left[\!\frac{U_d(z_1)}{[z_1\!-\!z_2][z_1\!-\!z_3]}\!
+\!\frac{U_d(z_2)}{[z_2\!-\!z_1][z_2\!-\!z_3]}\!+\!\frac{U_d(z_3)}{[z_3\!-\!z_1][z_3\!-\!z_2]}\!\right]\!t_{1\!,2}t_{2\!,3} \\ 
0 & U_d(z_2) & \left[\frac{U_d(z_2)}{z_2-z_3}+\frac{U_d(z_3)}{z_3-z_2}\right]t_{2,3} \\ 0 & 0 & U_d(z_3) \end{bmatrix}
$$}}\noindent
for any $z_1\neq z_2\neq z_3\neq z_1\in\mathbb C$. If any two of the three diagonal entries happen to be equal to each other, the first derivative of $U_d$ evaluated in that entry
is involved in the off-diagonal entries, and if all three are equal, so is the second derivative of $U_d$. (Probably the easiest way to see these facts is by applying analytic functional
calculus and the formulae for resolvents of upper triangular matrices: $U_d(A)=(2\pi i)^{-1}\int_\gamma U_d(\zeta)(\zeta I_k-A)^{-1}\,{\rm d}\zeta$ for any smooth curve $\gamma$
surrounding $\sigma(A)$ once.)

More general examples can be derived from the estimates obtained in Section \ref{1var}: specifically, assuming that the spectrum $\sigma(a_\tau)$ of our generating variable
is sufficiently large so that the compact subset $S_{a_\tau,k}:=\{A=A^\star\in\mathbb M_k(\mathbb C)\colon\sigma(A)\subset\sigma(a_\tau)\}$ of $\mathbb S_k$ is not pluripolar
in $\mathbb M_k(\mathbb C)=\mathbb C^{k^2}$ (which is the case, for instance, if $\sigma(a_\tau)$ has nonempty interior\footnote{This immediately implies that 
the set $S_{a_\tau,k}$ has nonempty interior in $\mathbb S_k$, so that it cannot be pluripolar.}, 
which happens trivially for the semicircular) we have seen that $\Phi_{\tau,d}^2(A)\le1$ for all $A\in S_{a_\tau,k}$. This, together with the estimate in Section \ref{grow},
implies that both $\Sigma_\tau^2$ and $\Phi_\tau^2$ are finite q.e. on $\mathbb M_k(\mathbb C)$
(see, for instance, \cite[Theorem 1.6, Appendix B]{ST}). Moreover, they are also no greater than one on $S_{a_\tau,k}$.
Since $\|\cdot\|_2$ and $\|\cdot\|$ are comparable on $\mathbb M_k(\mathbb C)$,
it follows that $\Sigma_\tau^\infty$ and $\Phi_\tau^\infty$ are finite q.e. on $\mathbb M_k(\mathbb C)$ as well.

\end{example}

A typical noncommutative context is provided by the case of free products. Let us consider next such an example.

\begin{example}\label{L(F2)}
Consider two copies $\tau_1,\tau_2$ of the semicircular (Wigner) distribution from Example \ref{Wigner} above, and take their {\em free product}
$\tau=\tau_1*\tau_2\colon\mathbb C\langle X_1,X_2\rangle\to\mathbb C$: $\tau$ is completely specified by the following evaluation rules: $\tau$ is $\mathbb C$-linear,
$\tau(P(X_j))=\tau_j(P(X)),j=1,2$, and $\tau(P_{i_1}(X_{i_1})P_{i_2}(X_{i_2})\cdots P_{i_r}(X_{i_r}))=0$ whenever $r\in\mathbb N,r\ge1,i_1,i_2,\dots,i_r\in\{1,2\}$
are such that $i_1\neq i_2\neq\cdots\neq i_r$ and $\tau(P_{i_j}(X_{i_j}))=0,1\le j\le r$. It is known that $\tau$ is a faithful, bounded, tracial state (see \cite{V1} for details).
As mentioned in Example \ref{ortex}(3), a family of orthonormal polynomials for $\tau$ is $\{U_w\}_{w\in\langle X_1,X_2\rangle}$,
$U_w(X_1,X_2)=U_{d_{i_1}}(X_{i_1})U_{d_{i_2}}(X_{i_2})\cdots U_{d_{i_r}}(X_{i_r})$ whenever $w=X_{i_1}^{d_{i_1}}X_{i_2}^{d_{i_2}}\cdots X_{i_r}^{d_{i_r}},$
$r\in\mathbb N,i_1,i_2,\dots,i_r\in\{1,2\},i_1\neq i_2\neq\cdots\neq i_r$. Here, as before, $U_{d_{i_j}}(X_{i_j})$ are the Chebyshev polynomials of the second kind introduced
in Example \ref{Wigner} above.

First, it follows from Section \ref{monot} (and 
has been argued in item \eqref{max} in the list following Section \ref{coordchange}) that 
$\Phi^\bullet_\tau(A_1,A_2)\ge\max\{\Phi^\bullet_{\tau_1}(A_1),\Phi^\bullet_{\tau_2}(A_2)\},A_1,A_2\in\mathbb M_k(\mathbb C)$ (with a similar statement for $\Sigma$),
so that $\Phi^\bullet_\tau,\Sigma^\bullet_\tau$ are not constantly $1$. 

Let us show that $\Sigma^\bullet_\tau$, and hence $\Phi^\bullet_\tau,\bullet\in\{2,\infty\}$, is finite q.e. 
For an arbitrary $\mathbf f\in\mathbb M_k(\mathbb C\langle X_1,X_2\rangle_d)$, 
the following Fourier-like expansion holds:
\begin{eqnarray*}
\mathbf f(X_1,X_2)\!&=&\!\sum_{w\in\langle X_1,X_2\rangle_d}\!\left\langle\mathbf f,I_k\otimes U_w\right\rangle\otimes U_w(X_1,X_2)\\
\!&=&\!\sum_{w\in\langle X_1,X_2\rangle_d}\!(\textrm{Id}_{\mathbb M_k(\mathbb C)}\otimes\tau)((I_k\otimes U_w^\star)\mathbf f)\otimes U_w(X_1,X_2).
\end{eqnarray*}
Then
\begin{eqnarray*}
\lefteqn{\mathbf f(A_1,A_2)(I_k)^\star \mathbf f(A_1,A_2)(I_k)}\\
&=&\!\!\!\!\!\!\!\!\sum_{v,w\in\langle X_1,X_2\rangle_d}\!\!\!\!\!\!\!U_v(A_1,A_2)^\star({\rm Id}_{\mathbb M_k(\mathbb C)}\otimes\tau)(\mathbf f^\star I_k\otimes U_v)
(\textrm{Id}_{\mathbb M_k(\mathbb C)}\!\otimes\!\tau)(I_k\!\otimes\! U_w^\star\mathbf f) U_w(A_1,A_2).
\end{eqnarray*}
For $v=w$, then 
\begin{eqnarray*}
\lefteqn{U_w(A_1,A_2)^\star(\textrm{Id}_{\mathbb M_k(\mathbb C)}\!\otimes\!\tau)(\mathbf f^\star I_k\!\otimes\! U_w)
(\textrm{Id}_{\mathbb M_k(\mathbb C)}\!\otimes\!\tau)(I_k\!\otimes\! U_w^\star\mathbf f)U_w(A_1,A_2)}\\
&\quad\quad\quad \le & \|\mathbf f({a}_{\tau_1},{a}_{\tau_2})\|^2U_w(A_1,A_2)^\star[\tau\left(U_wU_w^\star\right)I_k]U_w(A_1,A_2)\\
&\quad\quad\quad = & \|\mathbf f({a}_{\tau_1},{a}_{\tau_2})\|^2U_w(A_1,A_2)^\star U_w(A_1,A_2),
\end{eqnarray*}
for all $w\in\langle X_1,X_2\rangle_d$. Elements $v\neq w\in\langle X_1,X_2\rangle_d$ appear obviously in pairs, so we can group them accordingly:
$U_v(A_1,A_2)^\star({\rm Id}_{\mathbb M_k(\mathbb C)}\otimes\tau)(\mathbf f^\star I_k\otimes U_v)
(\textrm{Id}_{\mathbb M_k(\mathbb C)}\!\otimes\!\tau)(I_k\!\otimes\! U_w^\star\mathbf f) U_w(A_1,A_2)+
U_w(A_1,A_2)^\star({\rm Id}_{\mathbb M_k(\mathbb C)}\otimes\tau)(\mathbf f^\star I_k\otimes U_w)
(\textrm{Id}_{\mathbb M_k(\mathbb C)}\!\otimes\!\tau)(I_k\!\otimes\! U_v^\star\mathbf f) U_v(A_1,A_2)$.
The positivity of $(V\pm W)^\star(V\pm W)$ guarantees that $-V^\star V-W^\star W\le V^\star W+W^\star V\le V^\star V+W^\star W$. Applying this 
observation to our sums yields
\begin{eqnarray*}
\lefteqn{\pm[U_v(A_1,A_2)^\star({\rm Id}_{\mathbb M_k(\mathbb C)}\otimes\tau)(\mathbf f^\star I_k\otimes U_v)
(\textrm{Id}_{\mathbb M_k(\mathbb C)}\!\otimes\!\tau)(I_k\!\otimes\! U_w^\star\mathbf f) U_w(A_1,A_2)}\\
& & \mbox{}+U_w(A_1,A_2)^\star({\rm Id}_{\mathbb M_k(\mathbb C)}\otimes\tau)(\mathbf f^\star I_k\otimes U_w)
(\textrm{Id}_{\mathbb M_k(\mathbb C)}\!\otimes\!\tau)(I_k\!\otimes\! U_v^\star\mathbf f) U_v(A_1,A_2)]\\
& \leq & \|\mathbf f({a}_{\tau_1},{a}_{\tau_2})\|^2\left(U_w(A_1,A_2)^\star U_w(A_1,A_2)+U_v(A_1,A_2)^\star U_v(A_1,A_2)
\right).
\end{eqnarray*}
We obtain the (rather brutal) majorization 
\begin{eqnarray}
\lefteqn{\mathbf f(A_1,A_2)(I_k)^\star \mathbf f(A_1,A_2)(I_k)}\nonumber\\
& \quad\quad\quad\leq & \|\mathbf f(\underline{a}_{\tau_1},\underline{a}_{\tau_2})\|^2\left[
\sum_{w\in\langle X_1,X_2\rangle_d}U_w(A_1,A_2)^\star U_w(A_1,A_2)\right.\nonumber\\
& & \mbox{}+\left.
\sum_{\substack{v,w\in\langle X_1,X_2\rangle_d \\ v<_{\rm gl}w}}U_w(A_1,A_2)^\star U_w(A_1,A_2)+U_v(A_1,A_2)^\star U_v(A_1,A_2)\right].\label{344}
\end{eqnarray}
We estimate next the product $U_v(A_1,A_2)^\star U_v(A_1,A_2)$. Given the specific form of $U_v(X_1,X_2)$, let us first focus on $U_d(X)$.
As $\left\|\frac{\mathbf P(\uA)(I_k)}{\|\mathbf P({a}_{\tau_1})\|}\right\|_\bullet^2\leq\Phi^\bullet_{\tau_1,d}(\uA)$ according to 
the definition of $\Phi^\bullet_{\tau_1,d}$, by employing the embedding $U_d\mapsto I_k\otimes U_d$ we have the estimate $\|U_d(A_j)\|^2_\bullet\le\|U_d(a_{\tau_j})\|^2
\Phi^\bullet_{\tau_j,d}(A_j)$.
According to \eqref{Sigma2} and \eqref{Sigma8}, by taking power $1/d$, we obtain $\|U_d(A_j)\|^{2/d}_\bullet\le\|U_d(a_{\tau_j})\|^{2/d}
\Sigma^\bullet_{\tau_j}(A_j),$  $A_j\in\mathbb M_k(\mathbb C), k\in\mathbb N$.

Consider the case $\bullet=\infty$. Given $w=X_{i_1}^{d_{i_1}}X_{i_2}^{d_{i_2}}\cdots X_{i_r}^{d_{i_r}}\in\langle X_1,X_2\rangle_d,$ we have 
\begin{eqnarray*}
\lefteqn{\|U_w(A_1,A_2)^\star U_w(A_1,A_2)\|}\\
&\leq&\|U_{d_{i_1}}(A_{i_1})\|^2\cdots\|U_{d_{i_r}}(A_{i_r})\|^2\\
&\le&\|U_{d_{i_1}}(a_{\tau_{i_1}})\|^2\cdots\|U_{d_{i_r}}(a_{\tau_{i_r}})\|^2\Sigma^\infty_{\tau_{i_1}}(A_{i_1})^{d_{i_1}}\cdots\Sigma^\infty_{\tau_{i_r}}(A_{i_r})^{d_{i_r}}\\
&\le&(d_{i_1}+1)^3\cdots(d_{i_r}+1)^3\Sigma^\infty_{\tau_{i_1}}(A_{i_1})^{d_{i_1}}\cdots\Sigma^\infty_{\tau_{i_r}}(A_{i_r})^{d_{i_r}}.
\end{eqnarray*}
We choose the $w$ for which $\displaystyle\max_{v\in\langle X_1,X_2\rangle_d}\|U_v(A_1,A_2)\|$ is achieved. By a most brutal estimate in \eqref{344},
\begin{align}
\|\mathbf f(A_1,A_2)&(I_k)\|^2\nonumber
 \leq 2^{2d}\|\mathbf f({a}_{\tau_1},{a}_{\tau_2})\|^2\max_{v\in\langle X_1,X_2\rangle_d}\|U_v(A_1,A_2)\|\nonumber\\
&\le4^d\|\mathbf f({a}_{\tau_1},{a}_{\tau_2})\|^2(d_{i_1}+1)^3\cdots(d_{i_r}+1)^3\Sigma^\infty_{\tau_{i_1}}(A_{i_1})^{d_{i_1}}\cdots\Sigma^\infty_{\tau_{i_r}}(A_{i_r})^{d_{i_r}},\nonumber
\end{align}
so that, by dividing with $\|\mathbf f({a}_{\tau_1},{a}_{\tau_2})\|^2$ and taking power $1/d$,
\begin{eqnarray}
\Sigma^\infty_{\tau}(A_1,A_2)\nonumber
& \leq & 4[(d_{i_1}+1)\cdots(d_{i_r}+1)]^{3/d}\Sigma^\infty_{\tau_{i_1}}(A_{i_1})^{d_{i_1}/d}\cdots\Sigma^\infty_{\tau_{i_r}}(A_{i_r})^{d_{i_r}/d}\nonumber\\
& < & 32\max\{\Sigma^\infty_{\tau_1}(A_1),\Sigma^\infty_{\tau_2}(A_2)\}.\label{345}
\end{eqnarray}
This shows that all of $\Sigma^\infty_{\tau}(A_1,A_2),\Sigma^2_{\tau}(A_1,A_2),\Phi^\infty_{\tau}(A_1,A_2),\Phi^2_{\tau}(A_1,A_2)$ are non-trivial and finite q.e.


By Remark \ref{Remark33}(4), the characterization of $\kappa_{\tau,d}$ as minimum over certain sets of polynomials as in Theorem \ref{th:christoffeloptim}
(and parts (2) and (3) of the same Remark \ref{Remark33}) holds also when $\kappa_{\tau,d}$ is evaluated on tuples of elements in a finite von Neumann algebra.
As the entire proof of Proposition \ref{Siciak} is based on this characterization, it follows that it holds also for $\uA$ replaced by $\underline{a}_\tau\in W^*(\tau)^n$.
In particular, in our case, ${\rm tr}(\kappa_{\tau,d}(({a}_{\tau_1},{a}_{\tau_2}),({a}_{\tau_1},{a}_{\tau_2}))(1))=2^d$ from the definition of 
$\kappa_{\tau,d}$ and of orthonormal polynomials. On the other hand, by definition $\Phi_{\tau,d}^2(({a}_{\tau_1},{a}_{\tau_2}))=
\sup\{{\rm tr}(\mathbf P(({a}_{\tau_1},{a}_{\tau_2}))(1)^\star\mathbf P(({a}_{\tau_1},{a}_{\tau_2}))(1))\colon
\|\mathbf P(({a}_{\tau_1},{a}_{\tau_2}))\|\le1,\mathbf P\in W^*(\tau_1*\tau_2)\otimes\mathbb C\langle\uX\rangle_d\}\leq1$ because 
${\rm tr}(\mathbf P(({a}_{\tau_1},{a}_{\tau_2}))(1)^\star\mathbf P(({a}_{\tau_1},{a}_{\tau_2}))(1))\le
\|\mathbf P(({a}_{\tau_1},{a}_{\tau_2}))\|^2$. Thus,
$\Phi_\tau^2(({a}_{\tau_1},{a}_{\tau_2}))=1,$ while $\limsup_{d\to\infty}
{\rm tr}(\kappa_{\tau,d}(({a}_{\tau_1},{a}_{\tau_2}),({a}_{\tau_1},{a}_{\tau_2}))(1))=2$. Thus, the free product 
$\tau=\tau_1*\tau_2$ does {\em not} satisfy the Bernstein-Markov property, even though it does satisfy a $\mathfrak C$-Bernstein-Markov property
for some $2\le\mathfrak C\le8\sqrt{2}$, as it will be seen in Remark \ref{BMBound}.

\end{example}

The reader might legitimately hope that some of the inequalities in \eqref{Cinfty}--\eqref{C2} can be eventually shown to be equalities even
if $\mathfrak C>1$. It will follow from Theorem \ref{free} below, the remark following it, and Examples \ref{L(F2)} and \ref{Wigner} that this is unlikely to happen in many cases.

Some of the methods from Example \ref{L(F2)} can be applied in a more general context in order to provide some estimates the noncommutative Siciak function. In view of 
\eqref{UnitaryInvariance}, the following theorem appears to be intimately related to Voiculescu's asymptotic freeness result for independent unitarily invariant random matrices, 
and can be viewed as a partial free analogue of Siciak's Theorem \cite[Theorem 5.1.8]{Klimek}.

\begin{theorem}\label{free}
Let $\uX_1=(X_1,\dots,X_n)$ and $\uX_2=(X_{n+1},\dots,X_{n+m})$ be selfadjoint non-commuting indeterminates. Assume  $\tau_1\colon\mathbb C\langle\uX_1
\rangle\to\mathbb C$ and $\tau_2\colon\mathbb C\langle\uX_2\rangle\to\mathbb C$ are two faithful positive bounded traces, and denote $\tau=\tau_1*
\tau_2 \colon\mathbb C\langle\uX_1,\uX_2\rangle\to\mathbb C$ their free product. Suppose that $\tau_j$ satisfies the $\mathfrak C_j$-Bernstein-Markov property for some $\mathfrak
C_j\in[1,+\infty)$, $j=1,2$. Then for each $k\in\mathbb N,$ one has
\begin{align}
\max\{\Sigma_{\tau_1}^\infty(\underline{A}_1),&\Sigma^\infty_{\tau_2}(\underline{A}_2)\}\le\Sigma^\infty_\tau(\uA_1,\uA_2)\nonumber\\  
&\le (m+n)^2\max\left\{\Sigma^\infty_{\tau_{1}}(\uA_{1})\!\sup_{q\in\mathbb N}M(\tau_1)_q^\frac{2}{q},\ 
\Sigma^\infty_{\tau_2}(\uA_{2})\!\sup_{q\in\mathbb N}M(\tau_{2})_q^\frac{2}{q}\right\},
\end{align}
for all $(\underline{A}_1,\underline{A}_2)\in\mathbb M_k(\mathbb C)^n\times\mathbb M_k(\mathbb C)^m,$ and
\begin{align}
\max\{\Phi_{\tau_1}^\infty(\underline{A}_1),&\Phi^\infty_{\tau_2}(\underline{A}_2)\}\le\Phi^\infty_\tau(\uA_1,\uA_2)\nonumber\\ 
&\le (m+n)^2\max\left\{\Phi^\infty_{\tau_{1}}(\uA_{1})\!\sup_{q\in\mathbb N}M(\tau_1)_q^\frac{2}{q},\ 
\Phi^\infty_{\tau_2}(\uA_{2})\!\sup_{q\in\mathbb N}M(\tau_{2})_q^\frac{2}{q}\right\},
\end{align}
for all $(\underline{A}_1,\underline{A}_2)=(\underline{A}_1,\underline{A}_2)^\star\in\mathbb M_k(\mathbb C)^n\times\mathbb M_k(\mathbb C)^m$.
In particular, under the hypotheses of the theorem, the functions $\Sigma^\infty_\tau$, $\Sigma^2_\tau$, $\Phi^2_\tau,$ and $\Phi^\infty_\tau$ are finite q.e.
on the Euclidean space $\mathbb M_k(\mathbb C)^n\times\mathbb M_k(\mathbb C)^m$ and nontrivial.
\end{theorem}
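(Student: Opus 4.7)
The lower bounds $\max\{\Sigma^\infty_{\tau_j}(\underline{A}_j)\}\le\Sigma^\infty_\tau(\uA_1,\uA_2)$ and the analogous bounds for $\Phi^\infty$ are immediate from monotonicity, already recorded in item \ref{max} of the list following Section \ref{coordchange}: the restriction of $\tau$ to each $\mathbb C\langle\uX_j\rangle$ is $\tau_j$, and the embedding $W^*(\tau_j)\hookrightarrow W^*(\tau)$ is completely isometric. The plan is thus to focus entirely on the upper bounds, which I would obtain by carefully generalizing the computation carried out in Example \ref{L(F2)} from two free semicirculars to any pair of faithful bounded traces satisfying the $\mathfrak C_j$-Bernstein-Markov property.

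The key structural input is Example \ref{ortex}(3): an orthonormal basis $\{P_w\}_{w\in\langle\uX\rangle_d}$ for $L^2(\mathbb C\langle\uX\rangle_d,\tau)$ consists of the alternating products $P_w=P^{(i_1)}_{d_1}\cdots P^{(i_r)}_{d_r}$ of single-factor orthonormal polynomials for $\tau_1$ and $\tau_2$, with $d_1+\cdots+d_r=|w|$. Given $\mathbf f=\sum_w c_w\otimes P_w\in\mathbb M_k(\mathbb C\langle\uX\rangle_d)$ with $\|\mathbf f(\underline{a}_\tau)\|\le1$, expand $\mathbf f(\uA)(I_k)^*\mathbf f(\uA)(I_k)$ in this basis and bound off-diagonal cross terms via $\pm(V^*W+W^*V)\preceq V^*V+W^*W$, exactly as in the derivation of \eqref{344}. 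This produces
\begin{equation*}
\|\mathbf f(\uA)(I_k)\|^2\le\bsigma(n+m,d)^2\max_{w\in\langle\uX\rangle_d}\|P_w(\uA)\|^2.
\end{equation*}

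Next, I would bound each $\|P_w(\uA)\|^2$ using submultiplicativity together with the $\mathfrak C_j$-Bernstein-Markov property of each factor: $\|P_w(\uA)\|^2\le\prod_j\|P^{(i_j)}_{d_j}(A_{i_j})\|^2$, and since orthonormality makes $\|P^{(j)}_l\|_{L^2(\tau_j)}=1$, Definition \ref{BM-C} applied to $I_k\otimes P^{(j)}_l$ gives $\|P^{(j)}_l(a_{\tau_j})\|\le M(\tau_j)_l$. The definition of $\Phi^\infty_{\tau_j,l}$ then yields $\|P^{(j)}_l(A)\|^2\le M(\tau_j)_l^2\,\Phi^\infty_{\tau_j,l}(A)\le K_j^l\,\Sigma^\infty_{\tau_j}(A)^l$ where $K_j:=\sup_q M(\tau_j)_q^{2/q}$. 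Multiplying over the $r$ alternating factors and using that the total degrees in the two blocks sum to $|w|\le d$ gives $\max_w\|P_w(\uA)\|^2\le\bigl[\max\{K_1\Sigma^\infty_{\tau_1}(\uA_1),\,K_2\Sigma^\infty_{\tau_2}(\uA_2)\}\bigr]^d$.

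Combining the two estimates, taking $d$-th roots, and using $\bsigma(n+m,d)^{2/d}\to(n+m)^2$ as $d\to\infty$ delivers the $\Phi^\infty$ bound immediately. The $\Sigma^\infty$ bound requires the inequality for every $d$ at once, and the main obstacle I foresee is absorbing the small-$d$ discrepancy $\bsigma(n+m,d)^{2/d}-(n+m)^2$ into the Bernstein-Markov constants $K_j$, or else sharpening the crude cross-term counting above to give exactly $(n+m)^2$ uniformly in $d$. Once the upper bound is established, q.e.\ finiteness of $\Sigma^\infty_\tau$ and $\Phi^\infty_\tau$ (and, being dominated by them, of $\Sigma^2_\tau$ and $\Phi^2_\tau$) reduces to q.e.\ finiteness of the single-variable objects $\Sigma^\infty_{\tau_j}$, which is standard from their logarithmic growth (Section \ref{grow}) together with \cite[Appendix B, Theorem 1.6]{ST}; nontriviality follows from the lower bound and the corresponding single-variable nontriviality.
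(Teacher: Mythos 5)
Your plan for the lower bounds and the $\Sigma^\infty$ upper bound follows the paper's own route essentially step by step: monotonicity for the lower bounds, the orthonormal basis for free products from Example \ref{ortex}(3), the cross-term estimate $\pm(V^\star W+W^\star V)\preceq V^\star V+W^\star W$, submultiplicativity for $\|P_{w,\tau}(\uA)\|$, and the $\mathfrak C_j$-Bernstein-Markov property to bound $\|P_{u_j,\tau_{i_j}}(\underline{a}_{\tau_{i_j}})\|$ by $\sup_q M(\tau_{i_j})_q^{1/q}$. Your concern about the small-$d$ discrepancy between $\bsigma(n+m,d)^{2/d}$ and $(n+m)^2$ is in fact well founded: the paper majorizes the number of pairs by $(m+n)^{2d}$, but $\bsigma(m+n,d)>(m+n)^d$, so this is an underestimate and the honest constant is larger (e.g.\ $(m+n+1)^2$ for the $\Sigma^\infty$ bound, since the sup of $\bsigma(n+m,d)^{2/d}$ over $d$ is attained at $d=1$). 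This only shifts the constant and, as you note, does not affect q.e.\ finiteness or nontriviality.

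The genuine gap is in your $\Phi^\infty$ claim, which you describe as following ``immediately'' by taking $d$-th roots and a limsup. Doing that on your estimate gives
\begin{equation*}
\Phi^\infty_\tau(\uA_1,\uA_2)\le(m+n)^2\max\{K_1\Sigma^\infty_{\tau_1}(\uA_1),\,K_2\Sigma^\infty_{\tau_2}(\uA_2)\},
\end{equation*}
with $\Sigma^\infty$, not $\Phi^\infty$, on the right-hand side. That is strictly weaker than the theorem's second inequality, and you cannot upgrade it for free: $\Sigma^\infty\ge\Phi^\infty$ may be strict when $k>1$, as illustrated by the failure of monotonicity in \eqref{mess}. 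The reason the theorem restricts the $\Phi^\infty$ inequality to selfadjoint tuples $(\uA_1,\uA_2)=(\uA_1,\uA_2)^\star$ while the $\Sigma^\infty$ inequality is stated for arbitrary tuples is precisely that one more argument is needed here: for selfadjoint $\uA_{i_j}$ one has $P(\uA_{i_j})^\star=P^\star(\uA_{i_j})$, so $P(\uA_{i_j})P(\uA_{i_j})^\star=(PP^\star)(\uA_{i_j})$, and the paper iterates through the powers $(P_{u_j}P_{u_j}^\star)^{2^{N-1}}\in\mathbb C\langle\uX_{i_j}\rangle_{2^N|u_j|}$ to obtain
\begin{equation*}
\|P_{u_j,\tau_{i_j}}(\uA_{i_j})\|^2_\bullet\le\|P_{u_j,\tau_{i_j}}(\underline{a}_{\tau_{i_j}})\|^2\,\limsup_{N\to\infty}\Phi^\bullet_{\tau_{i_j},2^N|u_j|}(\uA_{i_j})^{1/2^N}\le\|P_{u_j,\tau_{i_j}}(\underline{a}_{\tau_{i_j}})\|^2\,\Phi^\bullet_{\tau_{i_j}}(\uA_{i_j})^{|u_j|},
\end{equation*}
which trades the sup over degrees (which would give $\Sigma^\infty$) for a limsup along the dyadic subsequence $2^N|u_j|$ (which gives $\Phi^\infty$). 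Without this step your proposal does not prove the $\Phi^\infty$ inequality as stated.
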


\begin{proof}
As in Example \ref{L(F2)}, our proof parallels to some extent the proof provided in \cite{Klimek} for Theorem 5.1.8. We start by recalling that 
$\Phi_{\tau_1*\tau_2}^\bullet(\underline{A}_1,\underline{A}_2)\ge\max\{\Phi_{\tau_1}^\bullet(\underline{A}_1),\Phi^\bullet_{\tau_2}(\underline{A}_2)\}$,
$\Sigma_{\tau_1*\tau_2}^\bullet(\underline{A}_1,\underline{A}_2)\ge\max\{\Sigma_{\tau_1}^\bullet(\underline{A}_1),\Sigma^\bullet_{\tau_2}(\underline{A}_2)\}$.
Indeed, this has nothing to do with freeness, and is a conclusion of Section \ref{monot}, as it has been argued in item \eqref{max} in the list following Section \ref{1var}.
The proof of the other inequality is more challenging.

The third item of Example \ref{ortex} (as shown in \cite[Section 3]{A1}) provides an explicit formula for orthonormal polynomials corresponding to free products of 
distributions. If $\{P_{w,\tau_1}\}_{w\in\langle\uX_1\rangle}$ and $\{P_{w,\tau_2}\}_{w\in\langle\uX_2\rangle}$
are the orthonormal polynomials associated to noncommutative distributions $\tau_1$ and $\tau_2$, respectively, then  the orthonormal polynomials
$\{P_{w,\tau}\}_{w\in\langle\uX_1,\uX_2\rangle}$ corresponding to $\tau=\tau_1*\tau_2$ are indexed by words in letters from $\underline{X}_1$ and $\underline{X}_2$, 
and if $w=u_1u_{2}u_{3}\cdots u_{r}\in\langle\uX_1,\uX_2\rangle,u_j\in\langle\uX_{i_j}\rangle,1\le j\le r,r\in\mathbb N$, where $i_1\neq i_2\neq i_3\neq\cdots\neq i_r$, then
$$
P_{w,\tau}(\uX_1,\uX_2)=P_{u_1,\tau_{i_1}}(\uX_{i_1})P_{u_2,\tau_{i_2}}(\uX_{i_2})P_{u_3,\tau_{i_3}}(\uX_{i_3})\cdots
P_{u_r,\tau_{i_r}}(\uX_{i_r}).
$$
Pick $d\in\mathbb N$ and $\mathbf f\in\mathbb M_k(\mathbb C\langle\uX_1,\uX_2\rangle_d)$ such that $\|\mathbf f(\underline{a}_{\tau_1},\underline{a}_{\tau_2})\|=1$. Then
\begin{eqnarray*}
\mathbf f(\uX_1,\uX_2)&=&\sum_{w\in\langle\uX_1,\uX_2\rangle_d}\left\langle\mathbf f,I_k\otimes P_{w,\tau}\right\rangle\otimes P_{w,\tau}(\uX_1,\uX_2)\\
&=&\sum_{w\in\langle\uX_1,\uX_2\rangle_d}(\textrm{Id}_{\mathbb M_k(\mathbb C)}\otimes\tau)((I_k\otimes P_{w,\tau}^\star)\mathbf f)\otimes P_{w,\tau}(\uX_1,\uX_2).
\end{eqnarray*}
It then follows that
\begin{eqnarray*}
\lefteqn{\mathbf f(\uA_1,\uA_2)(I_k)^\star \mathbf f(\uA_1,\uA_2)(I_k)}\\
&\!\!\!\! = &\!\!\left[\sum_{w\in\langle\uX_1,\uX_2\rangle_d}\!\!\!(\textrm{Id}_{\mathbb M_k(\mathbb C)}\otimes\tau)(I_k\otimes P_{w,\tau}^\star\mathbf f)
P_w(\uA_1,\uA_2)\right]^\star\\
& & \mbox{}\times\left[\sum_{w\in\langle\uX_1,\uX_2\rangle_d}\!\!\!(\textrm{Id}_{\mathbb M_k(\mathbb C)}\otimes\tau)(I_k\otimes P_{w,\tau}^\star\mathbf f)
P_w(\uA_1,\uA_2)\right]\\
&\!\!\!\!=&\!\!\!\!\!\!\!\sum_{v,w\in\langle\uX_1\!,\uX_2\rangle_d}\!\!\!\!\!\!\!\!P_{v,\tau}\!(\uA_1\!,\!\uA_2)^\star\!({\rm Id}_{\mathbb M_k(\mathbb C)}\!\otimes\!\tau)\!
({\bf f}^\star I_k\!\otimes\!P_{v,\tau})\!(\textrm{Id}_{\mathbb M_k(\mathbb C)}\!\otimes\!\tau)\!(I_k\!\otimes\! P_{w,\tau}^\star\mathbf f)\! P_{w,\tau}\!(\uA_1\!,\!\uA_2).
\end{eqnarray*}
As in Example \ref{L(F2)}, we obtain 
\begin{eqnarray}
\lefteqn{\mathbf f(\uA_1,\uA_2)(I_k)^\star \mathbf f(\uA_1,\uA_2)(I_k)}\nonumber\\
& \quad\leq & \|\mathbf f(\underline{a}_{\tau_1},\underline{a}_{\tau_2})\|^2\left[
\sum_{w\in\langle\uX_1,\uX_2\rangle_d}P_{w,\tau}(\uA_1,\uA_2)^\star P_{w,\tau}(\uA_1,\uA_2)\right.\nonumber\\
& & \mbox{}+\left.\sum_{\substack{v,w\in\langle\uX_1,\uX_2\rangle_d\\v<_{\rm gl}w}}
P_{w,\tau}(\uA_1,\uA_2)^\star P_{w,\tau}(\uA_1,\uA_2)+P_{v,\tau}(\uA_1,\uA_2)^\star P_v(\uA_1,\uA_2)\right].\label{three47}
\end{eqnarray}
This holds for any polynomial $\mathbf f\in\mathbb M_k(\mathbb C)\otimes\mathbb C\langle\uX_1,\uX_2\rangle_d$ of degree at most $d$ which has 
norm one on $(\underline{a}_{\tau_1},\underline{a}_{\tau_2})$.

Again as in Example \ref{L(F2)},  
\begin{eqnarray*}
\lefteqn{P_{w,\tau}(\uA_1,\uA_2)^\star P_{w,\tau}(\uA_1,\uA_2)}\\
& = & P_{u_1,\tau_{i_1}}(\uA_{i_1})P_{u_2,\tau_{i_2}}(\uA_{i_2})P_{u_3,\tau_{i_3}}(\uA_{i_3})\cdots
P_{u_r,\tau_{i_r}}(\uA_{i_r})\\
& & \mbox{}\times P_{u_r,\tau_{i_r}}(\uA_{i_r})^\star\cdots P_{u_3,\tau_{i_3}}(\uA_{i_3})^\star P_{u_2,\tau_{i_2}}(\uA_{i_2})^\star P_{u_1,\tau_{i_1}}(\uA_{i_1})^\star,
\end{eqnarray*}
where we remind the reader that
$w=u_1u_{2}u_{3}\cdots u_{r}\in\langle\uX_1,\uX_2\rangle,u_j\in\langle\uX_{i_j}\rangle,1\le j\le r,r\in\mathbb N$, where $i_1\neq i_2\neq i_3\neq\cdots\neq i_r$ and
$|u_1|+|u_2|+\cdots+|u_r|\leq d$ ($|u_j|$ denotes the length of the word $u_j$). This implies
\begin{eqnarray*}
\lefteqn{P_{w,\tau}(\uA_1,\uA_2)^\star P_{w,\tau}(\uA_1,\uA_2)}\\
& \le & \|P_{w,\tau}(\uA_1,\uA_2)^\star P_{w,\tau}(\uA_1,\uA_2)\|\\
& \le & \|P_{u_1,\tau_{i_1}}(\uA_{i_1})\|^2\|P_{u_2,\tau_{i_2}}(\uA_{i_2})\|^2\|P_{u_3,\tau_{i_3}}(\uA_{i_3})\|^2\cdots
\|P_{u_r,\tau_{i_r}}(\uA_{i_r})\|^2,
\end{eqnarray*}
and in particular $\|P_{w,\tau}(\uA_1,\uA_2)\|_\bullet\le\|P_{u_1,\tau_{i_1}}(\uA_{i_1})\|\|P_{u_2,\tau_{i_2}}(\uA_{i_2})\|\|P_{u_3,\tau_{i_3}}(\uA_{i_3})\|\cdots$ $
\|P_{u_r,\tau_{i_r}}(\uA_{i_r})\|$. 

By definition, $\left\|\frac{\mathbf P(\uA_j)(I_k)}{\|\mathbf P(\underline{a}_{\tau_j})\|}\right\|_\bullet^2\leq
\Phi^\bullet_{\tau_j,d}(\uA_j)$ for any $\mathbf P(\underline{X}_j)\in\mathbb M_k(\mathbb C\langle\underline{X}_j\rangle_d)$, that is, for any 
polynomial $\mathbf P$ with $k\times k$ scalar matrix coefficients of degree at most $d$. We embed the orthonormal polynomials in 
$\mathbb M_k(\mathbb C\langle\underline{X}_j\rangle_d)$ via the obvious identification $P(\uX_j)\mapsto I_k\otimes P(\uX_j)$, which is isometric 
on operators. Thus, $\|P_{u_j,\tau_{i_j}}(\uA_{i_j})\|^2_\bullet=\|P_{u_j,\tau_{i_j}}(\uA_{i_j})^\star\|^2_\bullet
\leq\|P_{u_j,\tau_{i_j}}(\underline{a}_{\tau_{i_j}})\|^2\Phi^\bullet_{\tau_{i_j},|u_j|}(\uA_{i_j})$, which implies 
$\|P_{u_j,\tau_{i_j}}(\uA_{i_j})\|^2\le\|P_{u_j,\tau_{i_j}}(\underline{a}_{\tau_{i_j}})\|^2\Sigma^\infty_{\tau_{i_j}}(\uA_{i_j})^{|u_j|}$.
The cardinality of $\langle\underline{X}_1,\underline{X}_2\rangle_d$ is $(m+n)^d$. The number of summands in \eqref{three47}
is thus $\frac{(m+n)^d[(m+n)^d+1]}{2}$, which we agree to majorize by $(m+n)^{2d}$. We majorize the norm of the right-hand
side of \eqref{three47} by $(m+n)^{2d}$ times the maximum after $w$ of $\|P_{w,\tau}(\uA_1,\uA_2)\|^2.$ We have
\begin{align}
\|P_{w,\tau}(\uA_1,\uA_2)\|^2&\leq\|P_{u_1,\tau_{i_1}}(\uA_{i_1})\|^2\|P_{u_2,\tau_{i_2}}(\uA_{i_2})\|^2\|P_{u_3,\tau_{i_3}}(\uA_{i_3})\|^2\!\cdots
\|P_{u_r,\tau_{i_r}}(\uA_{i_r})\|^2\nonumber\\
&\leq\|P_{u_1,\tau_{i_1}}(\underline{a}_{\tau_{i_1}})\|^2\|P_{u_2,\tau_{i_2}}(\underline{a}_{\tau_{i_2}})\|^2\|P_{u_3,\tau_{i_3}}(\underline{a}_{\tau_{i_3}})\|^2\!\cdots\!
\|P_{u_r,\tau_{i_r}}(\underline{a}_{\tau_{i_r}})\|^2\nonumber\\
&\quad\ \mbox{}\times\Sigma^\infty_{\tau_{i_1}}(\uA_{i_1})^{|u_1|}\Sigma^\infty_{\tau_{i_2}}(\uA_{i_2})^{|u_2|}\Sigma^\infty_{\tau_{i_3}}(\uA_{i_3})^{|u_3|}\cdots
\Sigma^\infty_{\tau_{i_r}}(\uA_{i_r})^{|u_r|}\label{Ort}.
\end{align}
Since our traces $\tau_i,i=1,2$ satisfy a $\mathfrak C_i$-Bernstein-Markov property, the norms of the individual orthonormal polynomials are bounded from above:
\begin{equation}\label{Sanders}
\|P_{u_j,\tau_{i_j}}(\underline{a}_{i_j})\|^{2/|u_j|}\leq M(\tau_{i_j})_{|u_j|}^{2/|u_j|}\le\sup_{q\in\mathbb N}M(\tau_{i_j})_q^{2/q}<\infty.
\end{equation}
(As in Definitions \ref{BM}, \ref{BM-C}, it is necessary to specify here the dependence of the parameter $M_q$ on the tracial state $\tau_{i_j}$.)
Together with the above estimate for $\|P_{w,\tau}(\uA_1,\uA_2)\|$, this yields
\begin{align}
\|P_{w,\tau}(\uA_1,\uA_2)\|^\frac2d&\!
\le\|P_{u_1,\tau_{i_1}}(\underline{a}_{\tau_{i_1}})\|^\frac2d\|P_{u_2,\tau_{i_2}}(\underline{a}_{\tau_{i_2}})\|^\frac2d
\cdots\|P_{u_r,\tau_{i_r}}(\underline{a}_{\tau_{i_r}})\|^\frac2d\nonumber\\
&\quad\ \mbox{}\times\Sigma^\infty_{\tau_{i_1}}(\uA_{i_1})^\frac{|u_1|}{d}\Sigma^\infty_{\tau_{i_2}}(\uA_{i_2})^\frac{|u_2|}{d}
\cdots\Sigma^\infty_{\tau_{i_r}}(\uA_{i_r})^\frac{|u_r|}{d}\nonumber\\
&\le\left[\sup_{q\in\mathbb N}M(\tau_{i_1})_q^\frac{2}{q}\right]^{\!\frac{|u_1|}{d}}\left[\sup_{q\in\mathbb N}M(\tau_{i_2})_q^\frac{2}{q}\right]^{\!\frac{|u_2|}{d}}\!\cdots
\left[\sup_{q\in\mathbb N}M(\tau_{i_r})_q^\frac{2}{q}\right]^{\!\frac{|u_r|}{d}}\nonumber\\
&\quad\ \mbox{}\times\Sigma^\infty_{\tau_{i_1}}(\uA_{i_1})^\frac{|u_1|}{d}\Sigma^\infty_{\tau_{i_2}}(\uA_{i_2})^\frac{|u_2|}{d}
\cdots\Sigma^\infty_{\tau_{i_r}}(\uA_{i_r})^\frac{|u_r|}{d}\nonumber\\
&\le\max\left\{\Sigma^\infty_{\tau_{1}}(\uA_{1})\sup_{q\in\mathbb N}M(\tau_1)_q^\frac{2}{q},\ 
\Sigma^\infty_{\tau_2}(\uA_{2})\sup_{q\in\mathbb N}M(\tau_{2})_q^\frac{2}{q}\right\}.
\end{align}
Combining this estimate with \eqref{three47} yields
\begin{eqnarray*}
\lefteqn{\|\mathbf f(\uA_1,\uA_2)(I_k)\|^\frac2d}\\
&\!\le&\!\|\mathbf f(\underline{a}_{\tau_1},\underline{a}_{\tau_2})\|^\frac2d\!(m+n)^2
\max\left\{\!\Sigma^\infty_{\tau_{1}}(\uA_{1})\!\sup_{q\in\mathbb N}M(\tau_1)_q^\frac{2}{q},\ 
\Sigma^\infty_{\tau_2}(\uA_{2})\!\sup_{q\in\mathbb N}M(\tau_{2})_q^\frac{2}{q}\right\}
\end{eqnarray*}
which, by the definition of $\Sigma^\infty$, provides the desired estimate
\begin{equation}
\Sigma_\tau^\infty(\uA_1,\uA_2)\le(m+n)^2
\max\left\{\!\Sigma^\infty_{\tau_{1}}(\uA_{1})\!\sup_{q\in\mathbb N}M(\tau_1)_q^\frac{2}{q},\ 
\Sigma^\infty_{\tau_2}(\uA_{2})\!\sup_{q\in\mathbb N}M(\tau_{2})_q^\frac{2}{q}\right\},
\end{equation}
for all $(\uA_1,\uA_2)\in\mathbb M_k(\mathbb C)^{m+n}$.

Note that $\Phi^\infty_\theta(\uA)$ is finite if and only if $\Sigma^\infty_\theta(\uA)$ is. Thus, the above estimate guarantees that $\Sigma^\infty_\tau(\uA_1,\uA_2)<+\infty$,
$\Sigma^2_\tau(\uA_1,\uA_2)<+\infty$, $\Phi^2_\tau(\uA_1,\uA_2)<+\infty,$ $\Phi^\infty_\tau(\uA_1,\uA_2)<+\infty$ whenever $\Phi^\infty_{\tau_j}(\uA_j)<\infty$ and 
$\tau_j$ satisfy a $\mathfrak C_j$-Bernstein-Markov property, $j=1,2$. In particular, if $\Phi^\infty_{\tau_j}$ are finite q.e., then so are all four functions associated to $\tau$.

Assume next that the tuples $\uA_1$ and $\uA_2$ are of selfadjoint matrices (or, equivalently for our purposes, of skew-selfadjoint matrices, or of any scalar multiples of
selfadjoint matrices). Then for any polynomial $P\in\mathbb C\langle\uX_{i_j}\rangle$, one has $P(\uA_{i_j})^\star=P^\star(\uA_{i_j}),$ so that 
$P(\uA_{i_j})P(\uA_{i_j})^\star=(PP^\star)(\uA_{i_j})$. In particular, if $P\in\mathbb C\langle\uX_{i_j}\rangle_d$, then $(PP^\star)^p\in\mathbb C\langle\uX_{i_j}\rangle_{2dp}.$
In light of this observation, relation $\|P_{u_j,\tau_{i_j}}(\uA_{i_j})\|^2_\bullet\leq\|P_{u_j,\tau_{i_j}}(\underline{a}_{\tau_{i_j}})\|^2\Phi^\bullet_{\tau_{i_j},|u_j|}(\uA_{i_j})$
immediately implies 
\begin{eqnarray*}
\lefteqn{\|P_{u_j,\tau_{i_j}}(\uA_{i_j})\|^{2^{N+1}}_\bullet}\\
&\leq&\|P_{u_j,\tau_{i_j}}(\uA_{i_j})P_{u_j,\tau_{i_j}}(\uA_{i_j})^\star\|^{2^{N}}_\bullet=\|(P_{u_j,\tau_{i_j}}P_{u_j,\tau_{i_j}})^\star(\uA_{i_j})\|^{2^{N}}_\bullet\\
&\leq&\left\|\left[(P_{u_j,\tau_{i_j}}P_{u_j,\tau_{i_j}})^\star\right]^{2^{N-1}}(\uA_{i_j})\right\|_\bullet^2\\
& \le & \left\|\left[(P_{u_j,\tau_{i_j}}P_{u_j,\tau_{i_j}})^\star\right]^{2^{N-1}}(\underline{a}_{\tau_{i_j}})\right\|^2\Phi^\bullet_{\tau_{i_j},2^N|u_j|}(\uA_{i_j})\\
& = &  \left\|(P_{u_j,\tau_{i_j}}P_{u_j,\tau_{i_j}})^\star(\underline{a}_{\tau_{i_j}})\right\|^{2^{N}}\Phi^\bullet_{\tau_{i_j},2^N|u_j|}(\uA_{i_j})\\
& = &  \left\|P_{u_j,\tau_{i_j}}(\underline{a}_{\tau_{i_j}})\right\|^{2^{N+1}}\Phi^\bullet_{\tau_{i_j},2^N|u_j|}(\uA_{i_j});
\end{eqnarray*}
for $\bullet=\infty,$ we have used the equality $\|YY^\star\|=\|Y\|^2$, and for $\bullet=2$, the inequality ${\rm tr}_k(Y){\rm tr}_k(Y^\star)\leq{\rm tr}_k(YY^\star)$.
It follows that
\begin{eqnarray*}
\|P_{u_j,\tau_{i_j}}(\uA_{i_j})\|^2_\bullet
& \le &  \left\|P_{u_j,\tau_{i_j}}(\underline{a}_{\tau_{i_j}})\right\|^{2}\limsup_{N\to\infty}\Phi^\bullet_{\tau_{i_j},2^N|u_j|}(\uA_{i_j})^\frac{1}{2^N}\\
& \leq & \left\|P_{u_j,\tau_{i_j}}(\underline{a}_{\tau_{i_j}})\right\|^{2}\limsup_{q\to\infty}\Phi^\bullet_{\tau_{i_j},q}(\uA_{i_j})^\frac{|u_j|}{q}\\
& \leq & \left\|P_{u_j,\tau_{i_j}}(\underline{a}_{\tau_{i_j}})\right\|^{2}\Phi^\bullet_{\tau_{i_j}}(\uA_{i_j})^{|u_j|},
\end{eqnarray*}
for all $1\le j\le r$. Relation \eqref{Ort} becomes
\begin{align}
\|P_{w,\tau}(\uA_1,\uA_2)\|^2&\leq\|P_{u_1,\tau_{i_1}}(\uA_{i_1})\|^2\|P_{u_2,\tau_{i_2}}(\uA_{i_2})\|^2\|P_{u_3,\tau_{i_3}}(\uA_{i_3})\|^2\!\cdots
\|P_{u_r,\tau_{i_r}}(\uA_{i_r})\|^2\nonumber\\
&\leq\|P_{u_1,\tau_{i_1}}(\underline{a}_{\tau_{i_1}})\|^2\|P_{u_2,\tau_{i_2}}(\underline{a}_{\tau_{i_2}})\|^2\|P_{u_3,\tau_{i_3}}(\underline{a}_{\tau_{i_3}})\|^2\!\cdots\!
\|P_{u_r,\tau_{i_r}}(\underline{a}_{\tau_{i_r}})\|^2\nonumber\\
&\quad\ \mbox{}\times\Phi^\infty_{\tau_{i_1}}(\uA_{i_1})^{|u_1|}\Phi^\infty_{\tau_{i_2}}(\uA_{i_2})^{|u_2|}\Phi^\infty_{\tau_{i_3}}(\uA_{i_3})^{|u_3|}\cdots
\Phi^\infty_{\tau_{i_r}}(\uA_{i_r})^{|u_r|}\nonumber.
\end{align}
As for $\Sigma,$ by taking limsup as $d$ tends to infinity and picking maximizing elements $\mathbf f\in\mathbb M_k(\mathbb C\langle\uX_1,\uX_2\rangle_d)$ for each $d$,
we obtain 
\begin{eqnarray*}
\Phi^\infty_\tau(\uA_1,\uA_2) & \le & (m+n)^2
\max\left\{\Phi^\infty_{\tau_{1}}(\uA_{1})\!\sup_{q\in\mathbb N}M(\tau_1)_q^\frac{2}{q},\ 
\Phi^\infty_{\tau_2}(\uA_{2})\!\sup_{q\in\mathbb N}M(\tau_{2})_q^\frac{2}{q}\right\},
\end{eqnarray*}
for all {\em selfadjoint} tuples $(\uA_1,\uA_2)\in\mathbb M_k(\mathbb C)^{m+n}$.
\end{proof}

\begin{remark}\label{BMBound}
Under the hypotheses of Theorem \ref{free}, $\tau$ satisfies the $\mathfrak C$-Bernstein-Markov property for some constant $\mathfrak C$ satisfying the inequalities
$\min\{\mathfrak C_1,\mathfrak C_2\}\le\mathfrak C\le\sqrt{m+n}\max\left\{\sup_{q\in\mathbb N}M(\tau_1)_q^\frac1q,\sup_{q\in\mathbb N}M(\tau_{2})_q^\frac1q\right\}$.
This is indeed a very rough estimate: if $P\in\mathbb C\langle\uX_1,\uX_2\rangle_d$, then
\begin{eqnarray*}
\|P(\underline{a}_{\tau_1},\underline{a}_{\tau_2})\|&=&\left\|\sum_{w\in\langle\uX_1,\uX_2\rangle_d}c_wP_{w,\tau}(\underline{a}_{\tau_1},\underline{a}_{\tau_2})\right\|\\
& \le & \sum_{w\in\langle\uX_1,\uX_2\rangle_d}|c_w|\|P_{w,\tau}(\underline{a}_{\tau_1},\underline{a}_{\tau_2})\|\\
& \le & \left(\max_{w\in\langle\uX_1,\uX_2\rangle_d}\|P_{w,\tau}(\underline{a}_{\tau_1},\underline{a}_{\tau_2})\|\right)\sum_{w\in\langle\uX_1,\uX_2\rangle_d}|c_w|.
\end{eqnarray*}
By Cauchy-Schwarz, $\sum_{w\in\langle\uX_1,\uX_2\rangle_d}|c_w|\le\left(\sum_{w\in\langle\uX_1,\uX_2\rangle_d}|c_w|^2\right)^\frac12(m+n)^\frac{d}{2}=(m+n)^\frac{d}{2}
\|P\|_{L^2(\tau)}$ (where $(m+n)^\frac{d}{2}$ is the two-norm of the vector of ones of length equal to the cardinality of $\langle\uX_1,\uX_2\rangle_d$). The operator norm 
$\|P_{w,\tau}(\underline{a}_{\tau_1},\underline{a}_{\tau_2})\|$ of orthonormal polynomials associated to $\tau=\tau_1*\tau_2$ has been estimated in the proof of Theorem
\ref{free}: if $w=u_1u_{2}\cdots u_{r}\in\langle\uX_1,\uX_2\rangle,u_j\in\langle\uX_{i_j}\rangle,1\le j\le r,r\le d$, where $i_1\neq i_2\neq i_3\neq\cdots\neq i_r$ and
$|u_1|+|u_2|+\cdots+|u_r|= d$, then
\begin{eqnarray*}
\|P_{w,\tau}(\underline{a}_{\tau_1},\underline{a}_{\tau_2})\|^\frac{2}{d}&\le&\left[
\|P_{u_1,\tau_{i_1}}(\underline{a}_{\tau_{i_1}})\|^2\|P_{u_2,\tau_{i_2}}(\underline{a}_{\tau_{i_2}})\|^2\cdots\|P_{u_r,\tau_{i_r}}(\underline{a}_{\tau_{i_r}})\|^2\right]^{1/d}\\
& \le &\max\left\{\sup_{q\in\mathbb N}M(\tau_1)_q^\frac{2}{q},\ \sup_{q\in\mathbb N}M(\tau_{2})_q^\frac{2}{q}\right\} .
\end{eqnarray*}
Combining the last two estimates yields
$$
\|P(\underline{a}_{\tau_1},\underline{a}_{\tau_2})\|^\frac1d\le\sqrt{m+n}\max\left\{\sup_{q\in\mathbb N}M(\tau_1)_q^\frac1q,\sup_{q\in\mathbb N}M(\tau_{2})_q^\frac1q\right\}
\|P\|_{L^2(\tau)}^\frac1d.
$$
The lower bound is obvious.

This shows that Theorem \ref{free} applies to any finite free products $\tau=\tau_1*\tau_2*\cdots*\tau_s$, $s\in\mathbb N$, of bounded tracial states $\tau_j$
which satisfy a $\mathfrak C_j$-Bernstein-Markov property, $1\le j\le s$.
\end{remark}

We conclude this subsection by pointing out that $\frac12\log\Phi^\bullet_\tau(\uA),\frac12\log\Sigma^\bullet_\tau(\uA)$ have logarithmic growth as plurisubharmonic 
functions on the Euclidean space $\mathbb C^{nk^2}\simeq\mathbb M_k(\mathbb C)^n$ whenever they are non-trivial. Indeed, it follows from Section \ref{grow}
that if the sequence $\left\{\left(\Phi_{\tau,d}^\infty(\uA)\right)^{1/d}\right\}_{d\in\mathbb N}$ is locally bounded, then, as $\log$ is increasing,
$\frac12\log\Sigma_{\tau}^\infty(\uA)=\left[\sup_{d\in\mathbb N}\frac{1}{2d}\log\left(\Phi_{\tau,d}^\infty(\uA)\right)\right]^*$ has logarithmic growth, according to \cite[Theorem 1.6,
Appendix B]{ST}. Since it dominates the other three functions, it follows that $\frac12\log\Phi^\bullet_\tau(\uA),\frac12\log\Sigma^\bullet_\tau(\uA)$
have logarithmic growth as $\|\uA\|_\bullet\to\infty$.

\subsection{Derivatives}
One of the equivalent characterizations of plurisubharmonicity is via positivity: if $\Omega$ is an open subset of $\mathbb C^m$, then for any 
plurisubharmonic function $u$ on $\Omega$ and $\xi\in\mathbb C^m$, $\displaystyle\sum_{j,k=1}^m\xi_j\overline{\xi}_k\frac{\partial^2u}{\partial z_j
\partial\bar{z}_k}$ is a positive distribution in $\Omega$ (that is, it is a positive Borel measure on $\Omega$, possibly $\equiv0$) - see 
\cite[Proposition 1.43]{GZ}. We have established in the previous section that $\frac12\log\Phi^\bullet_\tau(\uA),\frac12\log\Sigma^\bullet_\tau(\uA)$,
$\bullet\in\{2,\infty\}$ are classical plurisubharmonic functions when viewed as functions on $\mathbb C^{nk^2}$, and that they have
at most logarithmic growth at infinity. Thus, according to, for instance, \cite[Proposition 3.34]{GZ}, 
$$
\int_{\mathbb C^{nk^2}}(d\, d^c\log\Phi^\bullet_\tau(\uA)^\frac12)^{nk^2}\le1,\quad\int_{\mathbb C^{nk^2}}(d\, d^c\log\Sigma^\bullet_\tau(\uA)^\frac12)^{nk^2}\le1,
$$
where $\varphi\mapsto (d\, d^c\varphi)^{nk^2}=(2\pi)^{-nk^2}\det\left[(\partial_i\overline{\partial}_j\varphi)_{1\le i,j\le nk^2}\right]\,{\rm d}V$ is the Monge-Amp\`ere operator
if $\varphi$ is of class $C^2$ ($V$ is the Lebesgue measure). Otherwise this should be understood in the weak sense. Thus, $(d\, d^c\log\Phi^\bullet_\tau(\uA)^\frac12)^{nk^2},
(d\, d^c\log\Sigma^\bullet_\tau(\uA)^\frac12)^{nk^2}$ are nonnegative measures on $\mathbb C^{nk^2}$ of mass at most one. For the purposes of this article, we
agree to call such a measure the {\em Monge-Amp\`ere measure }of the corresponding function.
In the same \cite[Proposition 3.34]{GZ} it is shown that if $\frac12\log\Phi^\bullet_\tau(\uA),\frac12\log\Sigma^\bullet_\tau(\uA)$
have exactly logarithmic growth at infinity, then these measures have total mass precisely one, i.e. the inequalities in the above displayed relations are equalities.

We show next that indeed $\uA\mapsto\frac12\log\Sigma^\infty_\tau(\uA)$ is exactly of logarithmic growth for a large class of distributions $\tau$. In order to do this,
we consider first the case of $n=1$ and then use item \eqref{max} from Lemma \ref{lemma:bounds} to conclude for arbitrary $n$. 
We look at polynomials $\mathbf L_{A,d}(X)=\frac{1}{\|A^d\|}(A^\star)^d\otimes X^d$. We assume without loss of generality that $\|a_\tau\|=1$ and, with loss of generality,
that $\sigma(a_\tau)$ has non-empty interior. We clearly have $\|\mathbf L_{A,d}(a_\tau)\|=\frac{1}{\|A^d\|}\|(A^\star)^d\|\|a_\tau\|^d=1$ 
and $\|\mathbf L_{A,d}(A)(I_k)^\star\mathbf L_{A,d}(A)(I_k)\|=\frac{1}{\|A^d\|^2}\left\|[(A^\star)^dA^d]^2\right\|=\|A^d\|^2.$
It follows that $\Phi^\infty_{\tau,d}(A)\ge\|A^d\|^2$. By definition, $\Sigma_\tau^\infty(A)=\left[\sup_{d\in\mathbb N}\Phi^\infty_{\tau,d}(A)^\frac1d\right]^*\ge
\sup_{d\in\mathbb N}\|A^d\|^\frac2d\ge\|A\|^2$. Taking logarithms and recalling Example \ref{Wigner} shows that $\frac12\log\Sigma_\tau^\infty(A)\ge\log^+\|A\|$
for $\|A\|$ sufficiently large.

Recall that if $\tau\colon\mathbb C\langle\uX_1,\uX_2\rangle\to\mathbb C$ is a bounded positive trace, then 
$\Sigma_\tau^\bullet(\uA_1,\uA_2)\ge\max\{\Sigma_{\tau|_{\mathbb C\langle\uX_1\rangle}}^\bullet(\uA_1),\Sigma_{\tau|_{\mathbb C\langle\uX_2\rangle}}^\bullet(\uA_2)\}$,
with a similar statement for $\Phi^\bullet_\tau,\bullet\in\{2, \infty\}$. Thus, if $\frac12\log\Sigma_{\tau|_{\mathbb C\langle\uX_j\rangle}}^\bullet(\uA_j)\ge
\log^+\|\uA_j\|_\bullet-K$ for some $K\ge0$ not depending on $\uA_j,j=1,2$, then $\frac12\log
\Sigma_\tau^\bullet(\uA_1,\uA_2)\ge\max\{\log^+\|\uA_1\|_\bullet-K,\log^+\|\uA_2\|_\bullet-K\}$. Since $\|\uA_1\|_2^2+\|\uA_2\|^2_2=\|(\uA_1,\uA_2)\|_2^2$,
one has 
\begin{eqnarray*}
\log\|(\uA_1,\uA_2)\|_2&=&\log\max\{\|\uA_1\|_2,\|\uA_2\|_2\}+\log\sqrt{1+\frac{\min\{\|\uA_1\|^2_2,\|\uA_2\|^2_2\}}{\max\{\|\uA_1\|^2_2,\|\uA_2\|^2_2\}}}\\
&\le&\max\{\log\|\uA_1\|_2,\log\|\uA_2\|_2\}+\log\sqrt2.
\end{eqnarray*}
Similarly, one has $\|(\uA_1,\uA_2)\|\,=\,\max\{\,\|\uA_1\|,\,\|\uA_2\|\,\},$ so that $\log\|(\uA_1,\uA_2)\|\,=\,\max\{\log\|\uA_1\|,\,\log\|\uA_2\|\}$. Putting these together, 
it follows that
$$
\frac12\log\Sigma_\tau^\bullet(\uA_1,\uA_2)\ge\log\|(\uA_1,\uA_2)\|_\bullet-\sqrt2-K
$$
whenever $\frac12\log\Sigma_{\tau|_{\mathbb C\langle\uX_j\rangle}}^\bullet(\uA_j)\ge
\log^+\|\uA_j\|_\bullet-K$ for some $K\ge0$, $j=1,2$.

We conclude that whenever $\tau\colon\mathbb C\langle\uX\rangle\to\mathbb C$ is a positive bounded tracial state such that $\tau|_{\mathbb C\langle X_j\rangle}$
is a probability distribution whose support contains a nonempty open set in $\mathbb R$, the function $\Sigma_\tau^\infty(\uA)$ has precisely logarithmic growth
at infinity, i.e. belongs to the Lelong class $\mathcal L^+(\mathbb C^{nk^2})\,=\,\{u\colon\mathbb C^{nk^2}\to[-\infty,+\infty)\colon u\text{ plurisubharmonic, }\newline
\log^+|z|-K_u\le u(z)\le\log^+|z|+K_u,z\in\mathbb C^{nk^2},\text{ for some }K_u>0\}$. Proposition \ref{Siciak} allows us to obtain the following result:

\begin{proposition}\label{Meas}
Assume that $\tau\colon\mathbb C\langle\uX\rangle\to\mathbb C$ is a positive bounded tracial state such that $\tau|_{\mathbb C\langle X_j\rangle}$
is a probability distribution whose support contains a nonempty open set in $\mathbb R$, $1\le j\le n$. If any of $\Sigma_\tau^\bullet,\Phi^\bullet_\tau,\bullet\in\{2,\infty\}$ 
is well-defined and finite q.e., then $\Sigma_\tau^\infty\in\mathcal L^+(\mathbb C^{nk^2})$. If $\tau$ satisfies in addition the $\mathfrak C$-Bernstein-Markov property \ref{BM-C},
then all of
$$
\left[\sup_{d\in\mathbb N}\log\|\kappa_{\tau,d}(\uA^\star,\uA)(I_k)\|^\frac1{2d}\right]^*,\ 
\left[\sup_{d\in\mathbb N}\log{\rm tr}_k(\kappa_{\tau,d}(\uA^\star,\uA)(I_k))^\frac1{2d}\right]^*,
$$
$$
\frac12\log\Sigma_\tau^\bullet\in\mathcal L^+(\mathbb C^{nk^2}),\quad\bullet\in\{2,\infty\}.
$$
In particular, applying the Monge-Amp\`ere operator to any of these plurisubharmonic functions yields a probability measure on $\mathbb C^{nk^2}$ for any $k\in\mathbb N$.
\end{proposition}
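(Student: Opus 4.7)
The plan is to assemble the proposition by stitching together three ingredients already present (explicitly or implicitly) in the excerpt: a lower logarithmic bound from a test-polynomial construction, an upper logarithmic bound from the growth estimates of Section \ref{grow}, and the transfer between kernel-based and Siciak-type functions provided by Proposition \ref{Siciak}. The final Monge-Amp\`ere claim will then be an immediate citation to \cite[Proposition 3.34]{GZ}.

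First, I would establish the lower bound $\frac12\log\Sigma^\infty_\tau(\uA)\ge\log^+\|\uA\|-K$ on $\mathbb M_k(\mathbb C)^n$. The paragraphs just preceding the proposition already carry this out for $n=1$: for each coordinate $j$ with $\sup\tau|_{\mathbb C\langle X_j\rangle}$ containing a nonempty open set, the test polynomial $\mathbf L_{A_j,d}(X_j)=\|A_j^d\|^{-1}(A_j^\star)^d\otimes X_j^d$ has unit norm on $\underline{a}_\tau$ by functional calculus (since $\|a_{\tau,j}\|$ is attained at the edge of an interval inside $\sigma(a_{\tau,j})$), and evaluating on $A_j$ gives $\|A_j^d\|^2$, yielding $\Sigma^\infty_{\tau|_{\mathbb C\langle X_j\rangle}}(A_j)\ge\|A_j\|^2$. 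Item \eqref{max} of Lemma \ref{lemma:bounds} then propagates this to $\Sigma^\infty_\tau(\uA)\ge\max_j\Sigma^\infty_{\tau|_{\mathbb C\langle X_j\rangle}}(A_j)$, and combining with $\log\|\uA\|_\bullet-\log\sqrt{n}\le\max_j\log\|A_j\|$ (both for the operator and HS norms) gives the desired logarithmic lower bound with a uniform constant.

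Second, for the matching upper bound I would invoke Section \ref{grow}: since the suprema in \eqref{330}–\eqref{331} are achieved, the family $\{\frac1{2d}\log\Phi^\infty_{\tau,d}(\uA)\}_d$ is locally bounded from above whenever any of $\Sigma^\bullet_\tau$ or $\Phi^\bullet_\tau$ is finite q.e., so \cite[Theorem 1.6, Appendix B]{ST} applies to $\frac12\log\Sigma^\infty_\tau$ (as the u.s.c.\ regularization of a sup of a locally bounded family of polynomial-logarithms of degree $\le 2d$) and yields logarithmic growth from above with some constant. Together with the lower bound, this places $\frac12\log\Sigma^\infty_\tau$ in $\mathcal L^+(\mathbb C^{nk^2})$. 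Since $\Sigma^\infty_\tau$ dominates $\Sigma^2_\tau,\Phi^\infty_\tau,\Phi^2_\tau$ (all bounded below by $1$, hence their logs are $\ge 0$) and each of the four is plurisubharmonic with the same logarithmic upper bound by Section \ref{grow}, the same membership in $\mathcal L^+$ follows for all four.

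Third, under the $\mathfrak C$-Bernstein-Markov hypothesis, Proposition \ref{Siciak} yields the two-sided sandwich
\[
\Sigma^\bullet_\tau(\uA)\ \le\ \sup_{d\in\mathbb N}\bigl\|\kappa_{\tau,d}(\uA,\uA^\star)(I_k)\bigr\|^{\frac1d}\ \le\ \Sigma^\bullet_\tau(\uA)\sup_{d\in\mathbb N}M_d^{\frac2d},
\]
and similarly for the trace; taking $\frac12\log$ shows that the two sup-in-$d$ functions of $\uA$ displayed in the proposition differ from $\frac12\log\Sigma^\infty_\tau$ (respectively $\frac12\log\Sigma^2_\tau$) by a bounded additive constant, so they too lie in $\mathcal L^+(\mathbb C^{nk^2})$. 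Passing to the u.s.c.\ regularization preserves the Lelong-class property. The main (and really only) technical subtlety is checking that local boundedness from above of $\{\Phi^\infty_{\tau,d}(\uA)^{1/d}\}$ is ensured by the finiteness-q.e.\ hypothesis; this is exactly the role of \cite[Theorem 1.6, Appendix B]{ST}, so this step is routine rather than a true obstacle.

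Finally, the probability-measure conclusion is immediate from \cite[Proposition 3.34]{GZ}: any function in $\mathcal L^+(\mathbb C^N)$ has Monge-Amp\`ere mass exactly $1$, so each of the four plurisubharmonic functions listed produces a probability measure on $\mathbb C^{nk^2}$ under $(dd^c\,\cdot\,)^{nk^2}$.
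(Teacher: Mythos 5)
Your proposal follows the paper's own route almost exactly: the lower bound via the test polynomials $\mathbf L_{A_j,d}$ for each coordinate plus the monotonicity of Section \ref{monot}, the upper bound from Section \ref{grow} and \cite[Theorem 1.6, Appendix B]{ST}, the Bernstein-Markov sandwich from Proposition \ref{Siciak}, and the final appeal to \cite[Proposition 3.34]{GZ}. This matches the paragraphs immediately preceding the proposition.

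There is, however, a genuine gap in the second paragraph at the sentence ``Since $\Sigma^\infty_\tau$ dominates $\Sigma^2_\tau,\Phi^\infty_\tau,\Phi^2_\tau$ (all bounded below by $1$, hence their logs are $\ge 0$) and each of the four is plurisubharmonic with the same logarithmic upper bound by Section \ref{grow}, the same membership in $\mathcal L^+$ follows for all four.'' Being bounded below by $1$ only yields $\frac12\log\Sigma^2_\tau\ge 0$; membership in $\mathcal L^+$ requires the much stronger lower bound $\frac12\log\Sigma^2_\tau(\uA)\ge\log^+\|\uA\|-K$. The domination $\Sigma^\infty_\tau\ge\Sigma^2_\tau$ gives only the upper bound for $\Sigma^2_\tau$, not the lower one. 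This matters for the proposition, since $\frac12\log\Sigma^2_\tau$ and the kernel-trace function (which is sandwiched between $\Sigma^2_\tau$ and $\Sigma^2_\tau\sup_dM_d^{2/d}$) are both claimed to lie in $\mathcal L^+$. The fix is short: for any $k\times k$ matrix $Y$ one has ${\rm tr}_k(Y^\star Y)\ge\frac1k\|Y\|^2$, so $\Phi^2_{\tau,d}(\uA)\ge\frac1k\Phi^\infty_{\tau,d}(\uA)$, hence $\Sigma^2_\tau(\uA)\ge\frac1k\Sigma^\infty_\tau(\uA)$, and the logarithmic lower bound for $\Sigma^\infty_\tau$ transfers to $\Sigma^2_\tau$ with an additional additive constant $\frac12\log k$. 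Alternatively one can run the test-polynomial lower bound directly on the trace (the trace of $|\mathbf L_{A,d}(A)(I_k)|^2$ is at least $\frac1k\|A^d\|^2$). You need one of these observations before the argument in paragraph 3 closes.
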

We remind the reader that, according to Theorem \ref{free}, $\Sigma_\tau^2$ is finite q.e. whenever $\tau|_{\mathbb C\langle X_j\rangle}$ are as in the above proposition and 
$\mathbb C\langle X_j\rangle$, $1\le j\le n$, are free with respect to $\tau$. This means that a large class of distributions is covered by the result above.

It is also known that in many cases the plurisubharmonic functions of the kind described in Proposition \ref{Meas} reach their minimum precisely at the points of the support of their
Monge-Amp\`ere measure. This, together with the considerations from Section \ref{Sec34}, guarantees that the supports of these measures are compact and invariant under conjugation by a 
unitary $k\times k$ matrix for each $k$.

\bigskip

{\bf Aside:} To conclude the discussion  of plurisubharmonic functions in the context of noncommutative sets, polynomials, distributions, and functions, we make some 
remarks regarding connections between plurisubharmonicity and the difference-differential operators, which play the role of the derivatives from classical analysis. This has no 
direct bearing on the rest of our paper at this time. In the case of noncommutative polynomials (or, more generally, functions), one has the free difference quotient standing 
in for the derivative \cite{V3,V2}. Specifically, let us consider now a polynomial $F\in\mathbb C\langle\underline{Z},\underline{Z}^\star\rangle.$ We identify 
$Z_j=X_j+iY_j,Z_j^\star=X_j-iY_j$ where $X_j,Y_j$ are selfadjoint indeterminates algebraically free from each other, $j\in\{1,\dots,n\}$.
That is, we double the number of noncommuting indeterminates. The rule for ``differentiation'' is simple in the case of polynomials in noncommuting 
selfadjoint indeterminates: $\partial_{X_i}X_j=\delta_{i,j}1\otimes1$, $\partial_{X_i}Y_k=0$, $\partial_{X_i}$ is linear from $\mathbb C\langle
\underline{Z},\underline{Z}^\star\rangle$ to $\mathbb C\langle\underline{Z},\underline{Z}^\star\rangle\otimes\mathbb C\langle\underline{Z},
\underline{Z}^\star\rangle$, and $\partial_{X_i}$ respects the Leibniz rule with respect to products of polynomials. For instance, $\partial_{X_1}
(7iX_1X_2Y_2X_1^2Y_2)=7i(1\otimes X_2Y_2X_1^2Y_2+X_1X_2Y_2X_1\otimes Y_2+X_1X_2Y_2\otimes X_1Y_2)$. To parallel classical analysis, we work 
with the derivative with respect to $Z$ and with respect to ${Z}^\star$. Specifically, $\partial_{Z_j}=\frac12(\partial_{X_j}-i\partial_{Y_j}),
\partial_{{Z}^\star_j}=\frac12(\partial_{X_j}+i\partial_{Y_j})$, a simple change of variable. Not surprisingly,
\begin{align*}
\partial_{Z_j}Z_j&=\frac12(\partial_{X_j}-i\partial_{Y_j})({X_j}+i{Y_j})=\frac12(1\otimes1+i0-i0-i\cdot i1\otimes1)=1\otimes1,\\
\partial_{Z_j}Z^\star_j&=\frac12(\partial_{X_j}-i\partial_{Y_j})({X_j}-i{Y_j})=\frac12(1\otimes1-i0-i0+i\cdot i1\otimes1)=0,\\
\partial_{Z_j^\star}Z_j&=\frac12(\partial_{X_j}+i\partial_{Y_j})({X_j}+i{Y_j})=\frac12(1\otimes1+i0+i0+i\cdot i1\otimes1)=0,\\
\partial_{Z_j^\star}Z^\star_j&=\frac12(\partial_{X_j}+i\partial_{Y_j})({X_j}-i{Y_j})=\frac12(1\otimes1-i0+i0-i\cdot i1\otimes1)=1\otimes1.
\end{align*}
This clearly implies that $\partial_{Z_j}(P(\underline{Z})^\star)=0=\partial_{Z_j^\star}P(\underline{Z})$ for all 
$P\in\mathbb C\langle\underline{Z}\rangle,1\le j\le n$. That is, an analytic polynomial is ``killed'' by $\partial_{Z_j^\star}$ and a 
conjugate-analytic one is ``killed'' by $\partial_{Z_j}$. We extend the definition of the derivative
precisely the same way to polynomials with matrix coefficients $\mathbf F\in\mathbb M_k(\mathbb C\langle\underline{Z},
\underline{Z}^\star\rangle)$.

When evaluated, the ``derivative'' becomes a proper derivative in the sense of Fr\'echet. Indeed, direct computation shows that if $\mathcal A$ is a 
star-algebra and $F\in\mathbb C\langle\underline{Z},\underline{Z}^\star\rangle$, then $\partial_{Z_j}F(\underline{a},\underline{a}^\star)(c)$ 
replaces in the above formula the tensor symbol $\otimes$ with the variable $c$:
$$
\partial_{Z_j}F(\underline{a},\underline{a}^\star)(c)=(\partial_{Z_j}F)(\underline{a},\underline{a}^\star)\circ m_c,
$$
where $m_c\colon\mathcal A\otimes\mathcal A^{\rm op}\to\mathcal A$, $m_c(p\otimes q)=pcq$. This makes the above equivalent definition of 
the classical plurisubharmonic functions ``immitable'' in the noncommutative context. Consider the easiest case, namely functions of 
the type $\phi\colon\uA\mapsto\text{tr}_k(F(\uA)^\star F(\uA))$ defined on $\mathbb M_k(\mathbb C)^n$ for each given polynomial 
$F\in\mathbb C\langle\uX\rangle$. We view $F$ as living in $\mathbb C\langle\underline{Z}\rangle$, that is, as an analytic polynomial. As mentioned 
before, when viewed as an element in $\mathbb C\langle\underline{X}\rangle$, the evaluation $F(\uA)^\star=F^\star(\uA^\star)$, and when viewed 
as an element in $\mathbb C\langle\underline{Z}\rangle$, we have $F^\star$ as an element in $\mathbb C\langle\underline{Z}^\star\rangle$.
The problem of differentiating becomes now very classical. For any given direction $C\in\mathbb M_k(\mathbb C)$, 
$$
\partial_{Z_i}\phi(\uA)(C)=\left.\frac{\partial\phi(\uA+\delta_izC)}{\partial z}\right|_{z=0},\quad\partial_{Z_j^\star}
\phi(\uA)(C)=\left.\frac{\partial\phi(\uA+\delta_j{z}C)}{\partial\bar{z}}\right|_{z=0},
$$
where we have denoted $\uA+\delta_izC=(A_1,\dots,A_{i-1},A_i+zC,A_{i+1},\dots,A_n)$.
Moreover, for a vector $\underline{C}=(C_1,\dots,C_n)^{\rm t}\in\mathbb M_k(\mathbb C)^n$, we write
$$
\sum_{i,j=1}^n\partial_{Z_i^\star}\partial_{Z_j}\phi(\uA)(C_j)(C_i)
$$
for the Levi form of $\phi$ at the point $\uA$ evaluated in $\underline{C}$ \cite[(1.3.6)]{GZ}. (It might seem normal to rather write 
$\partial_{Z_i^\star}\partial_{Z_j}\phi(\uA)(C_j)(C_i^\star)$ in the formula above; however, with the convention we have introduced, we actually 
have $[\partial_{Z_i^\star}\partial_{Z_j}\phi(\uA)](C_j)(C^\star_i)=\partial_{Z_i^\star}\partial_{Z_j}\phi(\uA)(C_j)(C_i)$ when 
$\partial_{Z_i^\star}\partial_{Z_j}\phi(\uA)$ is viewed as an operator. We shall keep in mind, and follow, this convention.) In terms of directional 
derivatives, one writes this relation naturally as $\sum\partial_{Z_i^\star}\partial_{Z_j}\phi(\uA)(C_j)(C_i)=\partial_z\partial_{\bar{z}}\phi(\uA+z
\underline{C})$.
To avoid confusion as much as possible, we will keep the notation with iterated arguments. 

Starting with Section \ref{Sec34}, we introduced several examples of classically plurisubharmonic functions obtained from noncommutative functions. These functions,
while not being themselves noncommutative functions, have been seen to posses many properties reminiscent of those of noncommutative functions. Based on these properties 
and the machinery of the free difference quotient described above, we propose next some more general function spaces that accept our examples as members.  For any 
noncommutative function $H$ defined on a noncommutative set $\Omega\subseteq\mathcal A^n$ in a $C^*$-algebra $\mathcal A$ endowed with a state $\varphi$, we may define 
$\mathscr H=\varphi\circ(H^\star H$) -- that is, $\mathscr H(\underline{a})=(\varphi\otimes\text{tr}_k)(H(\underline{a})^\star H(\underline{a}))$ for any 
$\underline{a}\in\Omega^{(k)}$ -- and then the Levi form of $\mathscr H$ at the point $\underline{a}$ evaluated in the direction $\underline{c}$ is
\begin{equation}\label{Levi}
\partial_z\partial_{\bar{z}}\mathscr H(\underline{a}+z\underline{c})|_{z=0}=\sum_{i,j=1}^n\partial_{Z_i^\star}\partial_{Z_j}\mathscr H(\underline{a})
(c_j)(c_i).
\end{equation}
The positivity of the above for all $(\underline{a},\underline{c})\in(\Omega\times\mathcal A^n)_{\rm nc}$ could be taken as a definition of an nc 
plurisubharmonic function. Indeed, as in the classical case one can find in references \cite{GZ,ST}, the above means that the correspondence
$z\mapsto\mathscr H(\underline{a}+z\underline{c})$ subharmonic as a  function from a complex neighborhood of zero into $[-\infty,+\infty)$ 
\cite[Definition 1.27]{GZ}, which seems to be a fair definition of plurisubharmonicity in our case too. However, as in the classical case, such a definition
is likely to be less general than needed: here we have assumed $H$ to exist and be a free noncommutative function, hence analytic. In \eqref{Levi}, derivatives 
exist in the strongest possible sense. It is undoubtedly necessary to consider at a minimum level-by-level weak closures of spaces of such functions (this is 
somewhat reminiscent of \cite[Example 1.41]{GZ}, but in our case analyticity should be replaced by the property of being a noncommutative function).

We should emphasize that the functions $\Phi_{\tau,d}^2$ defined in \eqref{330} belong to the space of plurisubharmonic functions of the form $\varphi\circ(H^\star H)$
for some nc function $H$, while the functions $\Phi_{\tau,d}^\infty$ from \eqref{331} generally do not, as they might not be classically differentiable at some levels. However, 
$\Phi_{\tau,d}^\infty$ is a well-defined (and monotone) limit of fractional powers of functions of the type $\Phi_{\tau,p}^2$ with $p$ tending to infinity along a properly 
chosen subsequence, hence plurisubharmonic in the sense that $z\mapsto\Phi_{\tau,d}^\infty(\uA+z\underline{C})$ is classically subharmonic on some neighborhood of zero in 
$\mathbb C$ for all $k\in\mathbb N,\underline{A},\underline{C}\in(\mathbb M_k(\mathbb C))^n$ (see \cite[Theorem 1.46]{GZ}). For these functions, \eqref{Levi} does hold, but 
in a weak sense. This same statement remains true for $\Phi_\tau^2,\Sigma_\tau^2,\Sigma_\tau^\infty,$ and $\Phi_\tau^\infty$ {\em if} they are less than $+\infty$. Thus,
\begin{align}
\sum_{i,j=1}^n\partial_{Z_i^\star}\partial_{Z_j}\Phi_{\tau,d}^2(\underline{A})(C_j)(C_i) & \ge0\text{ pointwise, and}\\
\sum_{i,j=1}^n\partial_{Z_i^\star}\partial_{Z_j}\Phi_{\tau,d}^\infty(\underline{A})(C_j)(C_i) & \ge0\text{ in the sense of distributions},
\end{align}
for all $k,d\in\mathbb N,$ and all $\uA,\underline{C}\in\mathbb M_k(\mathbb C)^n$, with similar statements for $\Sigma$. By positivity in the sense of distributions we 
mean that $\sum_{i,j=1}^n\partial_{Z_i^\star}\partial_{Z_j}\Phi_{\tau,d}^\infty(\underline{A})(C_i)(C_j)$ is a positive distribution on the Euclidean
space $\mathbb C^{nk^2}$ - the variables considered being $\uA$, while $\underline{C}$ should be viewed as parameters. 
We expect that eventually the correct use of the difference-differential operators will reveal a tighter connection between $\tau$ and the Monge-Amp\`ere
measures associated to $\frac12\log\Phi_\tau^2,\frac12\log\Sigma_\tau^2,\frac12\log\Sigma_\tau^\infty,$ and $\frac12\log\Phi_\tau^\infty$, as well as
to the logarithm of the norms of the Christoffel-Darboux kernels.

\section{Conclusion, Perspectives, and Numerical Experiments}
\label{sec:bench}

We have seen in the previous section that to a bounded, positive, tracial noncommutative distribution $\tau\colon\mathbb C\langle\underline{X}\rangle\to\mathbb C$ 
one associates, under some reasonable assumptions, sequences of Borel probability measures supported on 
$\mathbb M_k(\mathbb C)^n\simeq\mathbb C^{nk^2}.$ We focus here on those associated
to the logarithm of the Christoffel-Darboux kernel, which we denote by $\{\mu_{\tau,k}\}_{k\in\mathbb N}$. 
While we have investigated some properties of these measures, it is not yet clear 
how strongly they are related to $\tau$ itself besides being determinated by it. We conjecture that these measures might be good approximants 
for $\tau$ in a very precise way, thus providing another means to find random matrix approximants to (some) noncommutative disributions.

\begin{conjecture}\label{supp}
Assume that $\tau\colon\mathbb C\langle\uX\rangle\to\mathbb C$ is a bounded faithful tracial state. 
Suppose that $\{X_1,\dots,X_n\}$ are free with respect to $\tau$ and $\tau|_{\mathbb C\langle X_j\rangle}$ has a support with nonempty interior, $1\le j\le n$.
For all $f \in \SymRX$, one has 
\begin{align}
\label{eq:momentcvg}
& \!\lim_{d\to\infty}\lim_{k\to\infty}\!\!\sup_{\uA\in\Omega_{\tau,d}^{(k)}}
|\mathrm{tr}_k(f(\uA))-\tau(f(\underline{X}))|\!=\!\!\lim_{d\to\infty}\lim_{k\to\infty}\!\!\sup_{\uA\in\Omega_{\tau,d}^{(k)}}
|\mathrm{tr}_k(f(\uA))-\mathrm{tr}(f(\underline{a}_\tau)) |=0,\\
& \lim_{d \to \infty}\lim_{k\to\infty}\!\!\sup_{\uA \in\Omega_{\tau,d}^{(k)}} 
 \|f(\uA)\|_{\mathbb M_k(\mathbb C)}=\|f(\underline{a}_\tau)\|_{W^*(\tau)},
\end{align}
where $\Omega_{\tau,d}^{(k)} = \{\uA :  [\limsup_{d\to\infty}{\rm tr}_k (\kappa_{\tau,d}(\uA,\uA^\star)(I_k) )^{1/d}]^* \leq n\}$.
\end{conjecture}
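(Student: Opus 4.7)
The plan is to combine the pluripotential-theoretic comparisons from Section \ref{Sec34} with asymptotic orthogonality of $\{P_w\}$ and the Bernstein-Markov estimates of Theorem \ref{free}. Under the stated hypotheses (free coordinates with single-variable marginals of nonempty interior), Theorem \ref{free} and Remark \ref{BMBound} guarantee that $\tau$ satisfies the $\mathfrak C$-Bernstein-Markov property for some $\mathfrak C \in [1,\infty)$, and Proposition \ref{Siciak} gives the two-sided comparison
$$
\Phi_\tau^2(\uA) \le \limsup_{d\to\infty}{\rm tr}_k(\kappa_{\tau,d}(\uA,\uA^\star)(I_k))^{1/d} \le \mathfrak C^2\Phi_\tau^2(\uA),
$$
so that the defining condition for $\Omega_{\tau,d}^{(k)}$ is equivalent, up to a multiplicative constant, to membership in the pluripotential sublevel set $\{\uA : \Phi_\tau^2(\uA) \le n\}$. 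The first step is then to calibrate why the threshold $n$ is correct by evaluating at the canonical tuple: using orthonormality \eqref{eq:orthonormal},
$$
{\rm tr}(\kappa_{\tau,d}(\underline{a}_\tau,\underline{a}_\tau^\star)(1)) = \sum_{w\in\langle\uX\rangle_d}{\rm tr}(P_w(\underline{a}_\tau)P_w^\star(\underline{a}_\tau)) = \bsigma(n,d),
$$
whose $d$-th root tends to $n$. Thus $\underline{a}_\tau$ lies asymptotically on the boundary of $\Omega_{\tau,d}^{(k)}$, and standard random-matrix approximations of $\underline{a}_\tau$ in joint distribution and operator norm (available in the free setting via Haagerup-Thorbj\o{}rnsen and Collins-Guionnet-Shlyakhtenko type theorems) will eventually populate this set.

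For the trace convergence I would expand any $f \in \mathrm{Sym}\,\mathbb C\langle\uX\rangle$ of degree $m$ in the orthonormal basis as $f = \tau(f) + \sum_{1 \le |w| \le m}c_w P_w$, giving
$$
|{\rm tr}_k(f(\uA)) - \tau(f)| \le \sum_{1 \le |w| \le m}|c_w|\,{\rm tr}_k(P_w(\uA)P_w^\star(\uA))^{1/2}
$$
by Cauchy-Schwarz applied to ${\rm tr}_k$. The inner quantities are controlled through the graded decomposition $\kappa_{\tau,d} - \kappa_{\tau,d-1} = \sum_{|w|=d} P_w \otimes P_w^\star$: on $\Omega_{\tau,d}^{(k)}$, ${\rm tr}_k\kappa_{\tau,d}(\uA,\uA^\star)(I_k) \le n^d(1+o(1))$, and since $|\{|w| = d\}| = n^d$, the average per-word contribution is $1+o(1)$. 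A key intermediate step, which will need the free independence hypothesis, is to upgrade this average bound to an individual uniform bound ${\rm tr}_k(P_w(\uA)P_w^\star(\uA)) \to 1$ for fixed $w$; the factorization $P_w = P_{u_1,\tau_{i_1}}\cdots P_{u_r,\tau_{i_r}}$ from Example \ref{ortex}(3) together with single-variable Bernstein-Markov estimates should enable a telescoping argument.

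For the operator norm, the upper bound $\limsup_{k\to\infty}\|f(\uA)\| \le \|f(\underline{a}_\tau)\|$ follows by combining the $\infty$-version of Proposition \ref{Siciak} with the calibration $\Phi_\tau^\infty(\underline{a}_\tau) = n$ obtained analogously to the trace case, since on $\Omega_{\tau,d}^{(k)}$ the Christoffel kernel norms are dominated by those at $\underline{a}_\tau$ asymptotically. The matching lower bound, by contrast, is the main obstacle: it amounts to a uniform strong-convergence statement of Haagerup-Thorbj\o{}rnsen type, but required to hold not merely for a specific random matrix model but for every matrix tuple lying in $\Omega_{\tau,d}^{(k)}$. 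A natural route is to prove that the level set is ``thin'', i.e.\ that modulo unitary conjugation it asymptotically coincides with a neighborhood of matrix approximations to $\underline{a}_\tau$, in the spirit of the classical pluripotential fact that sublevel sets of the Siciak function approximate the polynomial-convex hull of the support. Establishing such a thinness statement uniformly over $\Omega_{\tau,d}^{(k)}$ — rather than merely on typical realizations — is precisely what separates this conjecture from existing strong-convergence theorems, and will likely require a noncommutative Monge-Amp\`ere theory along the lines sketched in the Aside at the end of Subsection \ref{trentaquatro}, together with a quantitative version of Proposition \ref{Meas} identifying the support of the Monge-Amp\`ere measure of $\frac12\log\Phi_\tau^2$ with the matricial ``support'' of $\tau$.
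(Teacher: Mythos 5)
This statement is labeled a \emph{Conjecture} in the paper, and the paper provides no proof of it: the authors offer only the heuristic discussion surrounding it (Connes embedding considerations, the analogy with Siciak-function sublevel sets), and numerical experiments in Section \ref{sec:bench}. Your proposal is therefore not being compared against a paper proof — there is none — but against the paper's intended but unproven line of reasoning, which your sketch does correctly reconstruct at the level of ingredients.

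Your calibration computation is right: by orthonormality and traciality, ${\rm tr}\bigl(\kappa_{\tau,d}(\underline{a}_\tau,\underline{a}_\tau)(1)\bigr)=\bsigma(n,d)$, whose $d$-th root tends to $n$, which is why the threshold in $\Omega_{\tau,d}^{(k)}$ is $n$. Invoking Theorem~\ref{free} and Remark~\ref{BMBound} to get the $\mathfrak C$-Bernstein-Markov property, and Proposition~\ref{Siciak} to compare ${\rm tr}_k\kappa_{\tau,d}$ with $\Phi^2_\tau$, is exactly the mechanism the paper sets up. However, your sketch has two unresolved gaps, one acknowledged and one partly elided. The elided one is in the trace-convergence step: membership in $\Omega_{\tau,d}^{(k)}$ only gives the asymptotic relation $\limsup_d{\rm tr}_k(\kappa_{\tau,d}(\uA,\uA^\star)(I_k))^{1/d}\le n$ after upper semicontinuous regularization; this does not yield a per-degree bound ${\rm tr}_k\kappa_{\tau,d}\le n^d(1+o(1))$, and more importantly, even granting such a bound, the passage from the \emph{sum} $\sum_{|w|\le d}{\rm tr}_k(P_w(\uA)P_w^\star(\uA))$ being of order $n^d$ to \emph{individual} control ${\rm tr}_k(P_w(\uA)P_w^\star(\uA))\to1$ for each fixed $w$ is not a consequence of the factorization in Example~\ref{ortex}(3): for a generic tuple in the level set, some low-degree $P_w(\uA)$ could be large and compensated by others, and the factorization $P_w=P_{u_1,\tau_{i_1}}\cdots P_{u_r,\tau_{i_r}}$ gives no equidistribution across words without already knowing that the marginals of $\uA$ approximate those of $\underline{a}_\tau$ — which is the thing to be proved. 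The gap you explicitly name (uniform strong convergence over the entire level set, i.e.\ the ``thinness'' of $\Omega_{\tau,d}^{(k)}$ modulo unitary conjugation) is indeed the genuine content of the conjecture; this is precisely why the paper leaves it as a conjecture and only tests it numerically, and why the authors suggest in the Aside to Section~\ref{trentaquatro} that a noncommutative Monge-Amp\`ere theory would be needed to resolve it. In short: your roadmap is consistent with the paper's intent, but it does not close the conjecture, and the intermediate ``telescoping'' claim should not be presented as merely needing the free-independence hypothesis — it is essentially equivalent to the conjecture itself.
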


Let us note some immediate consequences in this context of Theorem \ref{th:christoffeloptim}. Observe 
that in the above conjecture, the matrix variables $\uA$ are taken as approximants of $\underline{a}_\tau$: for this to be possible, it is necessary that the algebra $W^*(\tau)$
satisfies Connes' embedding property, such approximants exist. Viewed this way, our conjecture states that the approximation is
good along the subspaces generated by the first $\bsigma(n,d)$ vectors in the Hilbert module generated by the orthogonal polynomials of 
$\tau$ as $d$ tends to infinity.

In commutative analysis, the level sets of the Christoffel polynomial are used to approximate the support of the distribution of the variables
$\underline{a}_\tau$. 
From this perspective, Conjecture \ref{supp} can be viewed as specifying the notion of support for noncommutative distributions.

Here, we illustrate our theoretical framework for the computation of Christoffel-Darboux kernels for free semicircular and Poisson laws and attempt to provide some numerical
data to support our Conjecture \ref{supp}. Our experiments are performed with Mathematica 12, and we heavily rely on the NCAlgebra package \cite{helton1996ncalgebra} to handle 
noncommutative polynomials.  All results were obtained on an Intel Xeon(R) E-2176M CPU (2.70GHz $\times$ 6) with 32Gb of RAM.
Our code is available online\footnote{\url{http://homepages.laas.fr/vmagron/files/NCCD.zip}}.

Given $\tau$ and $d$, we first compute the Christoffel polynomial $\kappa_{\tau,d}$ from the knowledge of the $d$-th order moment matrix associated 
to $\tau$. Then, we consider approximations $\widetilde \Phi_{\tau,d}^{(k)}(\uA) = {\rm tr}_k (\kappa_{\tau,d}(\uA,\uA^\star)(I_k) )^\frac1d$ of 
\begin{align*}
\left[\limsup_{d\to\infty}{\rm tr}_k (\kappa_{\tau,d}(\uA,\uA^\star)(I_k) )^\frac1d\right]^*
.\\
\end{align*}
%
For several values of the matrix size $k$, we sample $N$ matrices from the following set
\[
\widetilde \Omega_{\tau,d}^{(k)} = \{\uA :  n - \varepsilon \leq \widetilde \Phi_{\tau,d}^{(k)}(\uA) \leq n + \varepsilon\}
\,,\]
with respect to a given matrix distribution.
Overall, it boils down to sampling in an $\varepsilon$-perturbation of the $n$-level set of the approximate Siciak function $\widetilde \Phi_{\tau,d}^{(k)}$.
Then, we choose an $f \in \SymRX$ and empirically check if \eqref{eq:momentcvg} from Conjecture \ref{supp} holds on average. This verification is 
performed by computing the expectation $E (\mathrm{tr}_k(f(\uA)))$ over the $N$ selected samples $\uA$ from $\widetilde \Omega_{\tau,d}^{(k)}$ 
and comparing it with $\tau (f(\uA))$.
For all experiments, we select $N = 10^5$.

\begin{filecontents}{1k2.data}
2  0.368
5  0.552
7  0.642
12 0.771
15 0.777
\end{filecontents}

\begin{filecontents}{1k3.data}
2  0.368
5  0.523
7  0.629
12 0.765
15 0.805
\end{filecontents}

\begin{filecontents}{1k5.data}
2
5  0.613
7  0.737
12 0.787
15 0.799
\end{filecontents}

\begin{filecontents}{1k10.data}
2
5
7  
12 0.801
15 0.823
\end{filecontents}

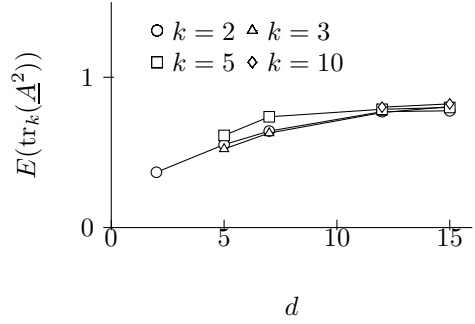
\begin{figure}
\begin{tikzpicture}[y=2cm, x=.3cm]
	\draw (0,0) -- coordinate (x axis mid) (16,0);
    	\draw (0,0) -- coordinate (y axis mid) (0,1.5);
    	\foreach \x in {0,5,...,15}
     		\draw (\x,-1pt) -- (\x,3pt)
			node[anchor=north] {\x};
    	\foreach \y in {0,1}
     		\draw (1pt,\y) -- (-3pt,\y) 
     			node[anchor=east] {\y}; 
	\node[below=0.8cm] at (x axis mid) {$d$};
	\node[rotate=90, above=0.8cm] at (y axis mid) {$E( {\rm tr}_k (\uA^2))$};

	\draw plot[mark=*, mark options={fill=white}] 
		file {1k2.data};
	\draw  plot[mark=triangle*, mark options={fill=white} ] 
		file {1k3.data}; 
	\draw  plot[mark=square*, mark options={fill=white} ] 
		file {1k5.data}; 
	\draw  plot[mark=diamond*, mark options={fill=white} ] 
		file {1k10.data}; 				
    
	\begin{scope}[shift={(6,1.1)}] 
	\draw  
		plot[mark=diamond*, mark options={fill=white}] (0.25,0) -- (0.5,0) 
		node[right]{$k=10$};	
	\draw[xshift=-3*\baselineskip]
		plot[mark=square*, mark options={fill=white}] (0.25,0) -- (0.5,0) 
		node[right]{$k=5$};
	\draw[yshift=\baselineskip]  
		plot[mark=triangle*, mark options={fill=white}] (0.25,0) -- (0.5,0)
		node[right]{$k=3$};	\draw[yshift=\baselineskip,xshift=-3*\baselineskip]  
		plot[mark=*, mark options={fill=white}] (0.25,0) -- (0.5,0)
		node[right]{$k=2$};		
	\end{scope}
\end{tikzpicture}
\caption{Averaging the normalized traces of $\uA^2$ over $N = 10^5$ samples of $\widetilde \Omega_{\tau,d}^{(k)}$ for a single free semicircular of variance $1$.}
\label{fig:onedim}
\end{figure}

\subsection{Single free semicircular}
Here, we consider a single standard free semicircular $\uA$ of variance 1. 
By \cite[Definition 4]{mingo2017free}, the odd moments of $\uA$ are 0 and the even moments are given by the Catalan numbers, i.e., 
$\tau(\uA^{2d+1}) = 0$ and $\tau(\uA^{2d}) = \frac{1}{d+1} \binom{2 d}{d}$.
The first four corresponding values of $\kappa_{\tau,d}$ are:
\begin{align*}
\kappa_{\tau,1}(\uA,\uA)(1) &=  1+\uA^2 \,, \\\kappa_{\tau,2}(\uA,\uA)(1) &=  2-\uA^2 +\uA^4  \,,\\
\kappa_{\tau,3}(\uA,\uA)(1) &=  2+3\uA^2- 3 \uA^4 + \uA^6 \,, \\
\kappa_{\tau,4}(\uA,\uA)(1) &=  3-3\uA^2 +8\uA^4 -5\uA^6+ \uA^8 \,.\\
\end{align*}
In Figure \ref{fig:onedim}, we present the numerical experiments obtained after sampling $\widetilde \Omega_{\tau,d}^{(k)}$ with respect to the 
Gaussian orthogonal matrix distribution with unit scale parameter, for degree $d \in \{1,\dots,15\}$ and matrix size $k \in \{2,3,5,10\}$. 
We choose a perturbation of $\varepsilon = 0.7$ to sample $\widetilde \Omega_{\tau,d}^{(k)}$ and $f(\uA) = \uA^2$.
The figure confirms that the average $E ({\rm tr}_k(f(\uA)))$ over the samples of $\widetilde\Omega_{\tau,d}^{(k)}$ gets closer to $\tau(f(\uA))=1$, 
when $d$ and $k$ increase.

\begin{filecontents}{k2.data}
1 2.90
2 2.55
3 2.51
4 2.57
5 2.51
6 2.49
7 2.36
8 2.24
\end{filecontents}

\begin{filecontents}{k3.data}
1 2.79
2 2.61
3 2.58
4 2.37
5 2.34
6 2.29
7 2.20
8 2.15
\end{filecontents}

\begin{filecontents}{k4.data}
1 2.91
2 2.49
3 2.31
4 2.27
5 2.12
6 2.13
7 2.10
8 2.02
\end{filecontents}

\begin{figure}
\begin{tikzpicture}[y=2cm, x=.7cm]
	\draw (0,1.5) -- coordinate (x axis mid) (8.5,1.5);
    	\draw (0,1.5) -- coordinate (y axis mid) (0,3.2);
    	\foreach \x in {1,...,8}
     		\draw (\x,1.48) -- (\x,1.52)
			node[anchor=north] {\x};
    	\foreach \y in {2,3}
     		\draw (1pt,\y) -- (-3pt,\y) 
     			node[anchor=east] {\y}; 
	\node[below=0.8cm] at (x axis mid) {$d$};
	\node[rotate=90, above=0.8cm] at (y axis mid) {$E( {\rm tr}_k (A_1 A_1 A_2 A_2 A_1 A_1))$};

	\draw plot[mark=*, mark options={fill=white}] 
		file {k2.data};
	\draw  plot[mark=triangle*, mark options={fill=white} ] 
		file {k3.data}; 
	\draw  plot[mark=square*, mark options={fill=white} ] 
		file {k4.data}; 		
    
	\begin{scope}[shift={(6,2.8)}] 
	\draw  
		plot[mark=square*, mark options={fill=white}] (0.25,0) -- (0.5,0) 
		node[right]{$k=4$};
	\draw[yshift=\baselineskip]  
		plot[mark=triangle*, mark options={fill=white}] (0.25,0) -- (0.5,0)
		node[right]{$k=3$};
	\draw[yshift=2*\baselineskip]  
		plot[mark=*, mark options={fill=white}] (0.25,0) -- (0.5,0)
		node[right]{$k=2$};		
	\end{scope}
\end{tikzpicture}
\caption{Averaging the normalized traces of $A_1 A_1 A_2 A_2 A_1 A_1$ over $N = 10^5$ samples of $\widetilde \Omega_{\tau,d}^{(k)}$ for a pair of free semicirculars of variance $1$.}
\label{fig:semicircular}
\end{figure}
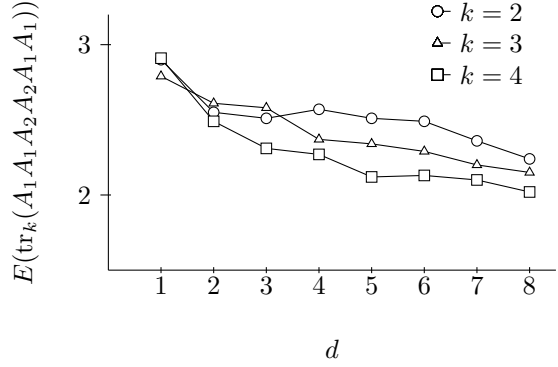

\subsection{Pair of free semicircular distributions}
Here, we consider a pair $\uA = (A_1,A_2)$ of standard free semicircular distributions of variance 1. 
The odd moments of $\uA$ are 0 and the even moments are given by the number of non-crossing pairings which respect the indices, e.g., 
$\tau(A_1 A_2 A_1) = 0$ and $\tau(A_1 A_1 A_2 A_2 A_1 A_1) = 2$.
The first three corresponding values of $\kappa_{\tau,d}$ are:
\begin{align*}
\kappa_{\tau,1}(\uA,\uA)(1) &=  1+A_1^2 + A_2^2 \,, \\\kappa_{\tau,2}(\uA,\uA)(1) &=  3-A_1^2 -A_2^2 + A_1^4 + A_1 A_2^2 A_1 + A_2 A_1^2 A_2 + A_2^4 \,,\\
\kappa_{\tau,3}(\uA,\uA)(1) &=  3+5A_1^2+5A_2^2 - 3 A_1^4 - 2 A_1^2 A_2^2 - A_1 A_2^2 A_1 - A_2 A_2^1 A_2 -2 A_2^2 A_1^2 \\
& \quad - 3 A_2^4 + A_1^6 + A_1^2 A_2^2 A_1^2 
 + A_1 A_2 A_1^2 A_2 A_1 + A_1 A_2^4 A_1 
+ A_2 A_1^4 A_2 \\
& \quad + A_2 A_1 A_2^2 A_1 A_2 + A_2^6 \,. \\
\end{align*}
In Figure \ref{fig:semicircular}, we present the numerical experiments obtained after sampling $\widetilde \Omega_{\tau,d}^{(k)}$ with respect to the Gaussian orthogonal matrix distribution with unit scale parameter, for degree $d \in \{1,\dots,8\}$ and matrix size $k \in \{2,3,4\}$. 
We choose a perturbation of $\varepsilon = 0.7$ to sample $\widetilde \Omega_{\tau,d}^{(k)}$ and $f(\uA) = A_1 A_1 A_2 A_2 A_1 A_1$.
As for the single case, the average of normalized traces over the samples of $\widetilde \Omega_{\tau,d}^{(k)}$ gets closer to $\tau(f(\uA)) = 2$, when $d$ and $k$ increase.

\subsection{Pair of free Poisson distributions}
Finally, we consider a pair $\uA = (A_1,A_2)$ of standard free Poisson distributions of rate $c$.
Since the cumulants of each law are equal to $c$, one can get the moments thanks to the moment-cumulant formula, by computing the non-crossing partitions.
For instance $\tau(A_1) = \tau(A_2) = c$, $\tau(A_1^2) = \tau(A_2^2) = c+c^2$, $\tau(A_1 A_2) = \tau(A_2 A_1) = c^2$ and $\tau(A_1^3) = c+3 c^2+ c^3$.
The first and second values of $\kappa_{\tau,d}$ are:
\begin{align*}
\kappa_{\tau,1}(\uA,\uA)(1) &=  1 + 2 c - 2 A_1 - 2 A_2 + \frac{A_1^2 + A_2^2}{c} \,, \\
\kappa_{\tau,2}(\uA,\uA)(1) &=  1 + 2 c + 4 c^2 - (4 + 8c) (A_1 + A_2) 
+ \left(8 +\frac{5}{c} +  \frac{1}{c^2} \right) (A_1^2+ A_2^2) \\
& \quad +  4 (A_1 A_2 + A_2 A_1) - \left(\frac{2}{c^2} + \frac{4}{c} \right) (A_1^3+A_2^3) \\
& \quad - \frac{A_1^2 A_2 + A_2 A_1^2+ A_1 A_2^2 + A_2^2 A_1}{c} - \frac{2 (A_1 A_2 A_1 + A_2 A_1 A_2)}{c}  \\
& \quad + \frac{A_1^4+A_1 A_2^2A_1 + A_2 A_1^2 A_2 + A_2^4}{c^2} \,. \\
\end{align*}
Recall that a Wishart matrix is of the form $A = \frac{1}{k} G G^\star $, where $G$ is a $k \times M$ matrix with entries being standard complex Gaussian random variables with mean $0$ and $E(|G_{ij}|^2) = 1$.
According to \cite[\S~4.5.1]{mingo2017free}, if one sends $k$ and $M$ to infinity so that the ratio $M/k$ is kept fixed to the value $c$, the limiting distribution is the free Poisson distribution of rate $c$.
In our experiments, we sample $\widetilde \Omega_{\tau,d}^{(k)}$ with respect to the Wishart matrix distribution, for $c = k = 5$,  with a perturbation of $\varepsilon = 10$. 
For $f(\uA) = A_1+ A_2$, we obtain the successive averages of normalized traces: $10.42$, $10.24$, $10.16$, $10.14$ and $10.1 \simeq 10 = \tau(A_1 + A_2)$, for $d \in \{1,\dots,5\}$, respectively.
For $f(\uA) = A_1 A_2$, we obtain  $26.78$, $25.78$, $25.7$, $25.6$ and $25.4$, while $\tau(A_1 A_2) = 5^2 = 25$.
Eventually, for $f(\uA) = A_1^3$, we obtain  $264$, $255$, $244$, $242$ and $239$, while $\tau(A_1^3) = 5 + 3 \times 5^2 + 5^3 = 205$.

Overall, the empirical convergence behavior of the sequence of averaged normalized traces seems to match with the theoretical moment values. 
It is important to emphasize that due to the computational burden associated to the rapidly growing number of non-crossing partitions, our method is currently limited to 
problems of modest size. 
We plan to overcome these scalability issues by exploiting the sparsity and symmetry properties of the noncommutative Christoffel-Darboux kernels, e.g., by relying on the framework derived in  \cite{helton2008measures}.

\subsection{Application to trace polynomial optimization}
We recall the tracial version of Lasserre's hierarchy \cite{nctrace} to minimize the trace of a noncommutative polynomial on a noncommutative semialgebraic set.
Given $f \in \SymRX$, a positive integer $m$ and $G = \{g_1,\dots,g_m \} \subseteq \SymRX$,
 the {\em semialgebraic} set $D_G$ associated to $G$ is defined as follows:
\begin{align}
\label{eq:DG}
D_G := \bigcup_{k \in \N} \{ \underline{A} = (A_1,\dots,A_n) \in \Sbb_k^n : g_j(\underline{A}) \succeq 0 \,, \quad j=1 \dots, m  \} \,.
\end{align}
Now fix a possibly infinite dimensional complex Hilbert space $\mathcal H$ endowed with a scalar product $\langle\cdot\mid\cdot\rangle$ and let 
$\mathcal{B}(\mathcal{H})$ be the algebra of all bounded linear operators on $\mathcal H$. As usual, positivity for an operator $C\in\mathcal{B}
(\mathcal{H})$ means that $\langle C\xi\mid\xi\rangle\ge0$ for all $\xi\in\mathcal H$, and, as for matrices, it is denoted $C\succeq0$. Strict positivity 
means that in addition $C$ is also invertible, and is denoted by $C\succ0$. For any set $\mathcal M$ of bounded operators on $\mathcal H$, we define
$$
D_G^\mathcal M:=\{\uA=(A_1,\dots,A_n)\in \mathcal M^n : \uA=\uA^\star, g_j(\uA)\succeq0\text{ on }\mathcal H\,, \ j=1 \dots, m  \},
$$
and its noncommutative extension
\begin{equation}\label{eq:NCsemi}
D_{G,\rm nc}^\mathcal M:=\bigcup_{k\in\mathbb N}\{\uA=(A_1,\dots,A_n)\in\mathbb M_k^{\rm sa}(\mathcal M)^n : g_j(\uA)\succeq0\text{ on }
\mathcal H^k,\  j=1, \dots, m  \}.
\end{equation}
(We denote by $\mathbb M_k^{\rm sa}(\mathcal M)$ the set of selfadjoint matrices with entries from $\mathcal M$.)
The important case for us is when $\mathcal M$ is a finite von Neumann algebra endowed with a normal faithful tracial state $\Trace$ (see \cite[Chapter 
V.2]{Takesaki01}). In particular, the set $D_G$ defined in \eqref{eq:DG} corresponds, with the notation from \eqref{eq:NCsemi}, to 
$D_{G,\rm nc}^\mathbb C$.

Given an arbitrary algebra $\mathcal A$ and $g,h\in\mathcal A$, we denote by $[g, h] := gh - hg$ the {\em commutator} of $g$ and $h$. 
In the particular case when $\mathcal A=\mathbb C\langle\uX\rangle$, two nc polynomials $g, h$ are called cyclically equivalent
(denoted $g \cyc h$) if $g-h$ is a sum of commutators. In particular, $\tau(g)=\tau(h)$.

Given the trace $\tau$ and the polynomial $g_j$, the matrix $\M_d (g_j\tau)$ is the localizing matrix associated to the nc  polynomial $g_j$. It is defined 
the following way: let $d_j = \lceil \text{deg}\,g_j / 2 \rceil\in\mathbb N$. The localizing matrix $\M_d (g_j\tau)$ is indexed on 
$\langle\uX\rangle_{d-d_j}$ with entry $(v, w)$ being equal to $\tau (v^\star g_j w)$. Note that the moment matrix $\M_d(\tau)$ is the localizing 
matrix associated to $g =1.$

Let us define  $\Trace_{\min} (f, G)$ as follows:
\begin{align}
\label{eq:constr_trace}
\Trace_{\min}(f,G) := \inf \{\Trace f(\underline{A}) : \uA \in D_G \} \,.
\end{align}

For an arbitrary finite von Neumann algebra $\mathcal M$ on a {\em separable} Hilbert space $\mathcal H$, we define  
$\Trace_{\min}(f,G)^{\mathcal M}$ as the trace-minimum of $f$ on $D_{G,\rm nc}^{\mathcal M}$:
\begin{align}
\label{eq:constr_trace21}
\Trace_{\min}(f,G)^{\mathcal M} := \inf \{\Trace f(\underline{A}) : \uA\in D_{G,\rm nc}^{\mathcal M} \} \,.
\end{align}

We approximate $\Trace_{\min}(f,G)^{\mathcal M}
$ from below via the following hierarchy of semidefinite programs, indexed by $d$: 
\begin{equation}
\label{eq:constr_trace_primal}
\begin{aligned}
\tau_{d}(f,K) := \inf\limits_{\tau} \quad  & \tau(f) \\	
\text{s.t.} 
\quad & \M_d(\tau)_{u,v} = \M_d(\tau)_{w,z}  \,, \quad \text{for all } u^\star v \cyc w^\star z \,, \\
\quad & \M_d(\tau)_{1,1} = 1 \,, \\
\quad & \M_d(\tau) \succeq 0 \,, \quad  \M_d(\tau) \in \Sbb_{\bsigma(n,d)} \,. \\
\quad & \M_d(g_j \tau) \succeq 0 \,,  \quad  \M_d(g_j \tau) \in \Sbb_{\bsigma(n,d - d_j)} \,,  \quad j = 1,\dots,m \,.
\end{aligned}
\end{equation}
The optimization variables of the semidefinite program \eqref{eq:constr_trace_primal} are the entries of the moment and localizing matrices. 
If at a given relaxation order $d$, the computed  tracial moment matrix $\M_d(\tau)$ is \emph{flat}, then finite convergence of the hierarchy is guaranteed and one can extract the minimizers of the corresponding optimization problem, see, e.g.,~\cite[Theorem~1.69]{burgdorf16}. 
One research investigation would be to  approximate such minimizers when the flatness condition does not hold.
The algorithmic scheme would be to build an approximation of the noncommutative Christoffel-Darboux kernel associated to $\tau$ from the inverse moment matrix (as stated in Proposition \ref{prop:inverse}) and perform a sampling procedure as done in the three former subsections.
%
%
\section*{Acknowledgements} 
STB would like to thank Ahmed Zeriahi and Norm Levenberg for very helpful discussions regarding plurisubharmonic functions and the Bernstein-Markov property.
All authors are grateful to Jean-Bernard Lasserre for encouraging and promoting the line of research that led to this paper.
Part of this work was conducted in November 2019 while STB was a visiting fellow supported by the Faculty of Natural Sciences 
Distinguished Scientist Visitors Program of the Ben Gurion University of the Negev, Beer-Sheva, Israel, 
and during the three authors' visit at the MFO workshop I.D.: 2010, ``Real Algebraic Geometry with a View Toward Hyperbolic Programming and Free Probability'' (1--7 March 2020).
The second-named author was supported by the Tremplin ERC Stg Grant ANR-18-ERC2-0004-01 (T-COPS project), the European Union's Horizon 2020 research and innovation programme under the Marie Sklodowska-Curie Actions, grant agreement 813211 (POEMA), as well as the FMJH Program PGMO (EPICS project). 
We would also like to thank the anonymous referee for a careful reading and numerous helpful comments that allowed us to improve the paper.
\bibliographystyle{alpha}

\begin{thebibliography}{MPW{\etalchar{+}}19}

\bibitem[AGZ10]{anderson2010introduction}
Greg~W Anderson, Alice Guionnet, and Ofer Zeitouni.
\newblock {\em An introduction to random matrices}.
\newblock Number 118. Cambridge university press, 2010.

\bibitem[AL12]{anjos2011handbook}
Miguel~F. Anjos and Jean~B. Lasserre, editors.
\newblock {\em Handbook on semidefinite, conic and polynomial optimization},
  volume 166 of {\em International Series in Operations Research \& Management
  Science}.
\newblock Springer, New York, 2012.

\bibitem[Ans08a]{A3}
Michael Anshelevich.
\newblock Monic non-commutative orthogonal polynomials.
\newblock {\em Proceedings of the American Mathematical Society},
  136(7):2395--2405, 2008.

\bibitem[Ans08b]{A2}
Michael Anshelevich.
\newblock Orthogonal polynomials with a resolvent-type generating function.
\newblock {\em Transactions of the American Mathematical Society},
  360(8):4125--4143, 2008.

\bibitem[Ans10]{A1}
Michael Anshelevich.
\newblock Product-type non-commutative polynomial states.
\newblock In {\em Noncommutative harmonic analysis with applications to
  probability. II}, volume~89 of {\em Banach Center Publications}. Institute of
  Mathematics, Polish Academy of Sciences, WARSZAWA, 2010.

\bibitem[BC04]{OP2}
T.~Banks and T.~Constantinescu.
\newblock Orthogonal polynomials in several non-commuting variables. {II}.
\newblock {\em arXiv:math/0412528v1 [math.FA]}, 2004.

\bibitem[BCKP13]{nctrace}
Sabine Burgdorf, Kristijan Cafuta, Igor Klep, and Janez Povh.
\newblock The tracial moment problem and trace-optimization of polynomials.
\newblock {\em Math. Program.}, 137(1-2, Ser. A):557--578, 2013.

\bibitem[BKP16]{burgdorf16}
Sabine Burgdorf, Igor Klep, and Janez Povh.
\newblock {\em Optimization of polynomials in non-commuting variables}.
\newblock SpringerBriefs in Mathematics. Springer, [Cham], 2016.

\bibitem[Blo97]{Bloom}
Thomas Bloom.
\newblock Orthogonal polynomials in {$\mathbb{C}^n$}.
\newblock {\em Indiana Univ. Math. J.}, 46(2):427--452, 1997.

\bibitem[BLPW15]{BLPW}
Thomas Bloom, Norman Levenberg, Federico Piazzon, and Franck Wielonsky.
\newblock Bernstein-{M}arkov: a survey.
\newblock {\em arXiv:1512.00739v1 [math.CV]}, 2015.

\bibitem[BMV16]{BMV}
Joseph~A. Ball, Gregory Marx, and Victor Vinnikov.
\newblock Noncommutative reproducing kernel {H}ilbert spaces.
\newblock {\em Journal of Functional Analysis}, 271:1844--1920, 2016.

\bibitem[BPSS20]{beckermann2020perturbations}
Bernhard Beckermann, Mihai Putinar, Edward~B Saff, and Nikos Stylianopoulos.
\newblock Perturbations of {C}hristoffel--{D}arboux {K}ernels: Detection of
  outliers.
\newblock {\em Foundations of Computational Mathematics}, pages 1--54, 2020.

\bibitem[CF98]{curto1998flat}
Ra\'{u}l~E. Curto and Lawrence~A. Fialkow.
\newblock Flat extensions of positive moment matrices: recursively generated
  relations.
\newblock {\em Mem. Amer. Math. Soc.}, 136(648):x+56, 1998.

\bibitem[CKP12]{cafuta2012constrained}
Kristijan Cafuta, Igor Klep, and Janez Povh.
\newblock Constrained polynomial optimization problems with noncommuting
  variables.
\newblock {\em SIAM J. Optim.}, 22(2):363--383, 2012.

\bibitem[Con02]{OP1}
Tiberiu Constantinescu.
\newblock Orthogonal polynomials in several variables. {I}.
\newblock {\em arXiv:math/0205333v1 [math.FA]}, 2002.

\bibitem[DX14]{dunkl2014orthogonal}
Charles~F Dunkl and Yuan Xu.
\newblock {\em Orthogonal polynomials of several variables}.
\newblock Number 155. Cambridge University Press, 2014.

\bibitem[ER00]{ER}
Edward~G. Effros and Zhong-{Jin} Ruan.
\newblock {\em Operator {S}paces}, volume~23 of {\em London Mathematical
  Society Monographs}.
\newblock Oxford University Press, 2000.

\bibitem[EW97]{EW}
Edward~G. Effros and Soren Winkler.
\newblock Matrix {C}onvexity: {O}perator {A}nalogues of the {B}ipolar and
  {H}ahn-{B}anach {T}heorems.
\newblock {\em Journal of Functional Analysis}, 144:117--152, 1997.

\bibitem[GS14]{GS}
Alice Guionnet and Dimitri Shlyakhtenko.
\newblock Free monotone transport.
\newblock {\em Inventiones Mathematicae}, pages 613--661, 2014.

\bibitem[GZ17]{GZ}
Vincent Guedj and Ahmed Zeriahi.
\newblock {\em Degenerate Complex {M}onge-{A}mp\`{e}re Equations}.
\newblock Number~26 in Tracts in Mathematics. European Mathematical Society,
  2017.

\bibitem[Hel02]{Helton02}
J.~William Helton.
\newblock ``{P}ositive'' noncommutative polynomials are sums of squares.
\newblock {\em Ann. of Math. (2)}, 156(2):675--694, 2002.

\bibitem[HJ85]{horn}
Roger~A Horn and Charles~R Johnson.
\newblock {\em Matrix analysis}.
\newblock Cambridge University Press, 1985.

\bibitem[HL05]{henrion2005detecting}
Didier Henrion and Jean-Bernard Lasserre.
\newblock Detecting global optimality and extracting solutions in gloptipoly.
\newblock In {\em Positive polynomials in control}, pages 293--310. Springer,
  2005.

\bibitem[HLP{\etalchar{+}}08]{helton2008measures}
J~William Helton, Jean~B Lasserre, Mihai Putinar, et~al.
\newblock Measures with zeros in the inverse of their moment matrix.
\newblock {\em The Annals of Probability}, 36(4):1453--1471, 2008.

\bibitem[HM04]{Helton04}
J.~William Helton and Scott~A. McCullough.
\newblock A {P}ositivstellensatz for non-commutative polynomials.
\newblock {\em Trans. Amer. Math. Soc.}, 356(9):3721--3737, 2004.

\bibitem[HMS96]{helton1996ncalgebra}
JW~Helton, RL~Miller, and M~Stankus.
\newblock Ncalgebra: A mathematica package for doing non commuting algebra.
\newblock {\em available from ncalg@ ucsd. edu}, 1996.

\bibitem[Kli91]{Klimek}
Maciej Klimek.
\newblock {\em Pluripotential theory}.
\newblock Number~6 in London Mathematical Society Monographs. Oxford: Clarendon
  Press, 1991.

\bibitem[KMP21]{klep2021sparse}
Igor Klep, Victor Magron, and Janez Povh.
\newblock Sparse noncommutative polynomial optimization.
\newblock {\em Mathematical Programming}, pages 1--41, 2021.

\bibitem[KMV21]{tracencpop}
Igor Klep, Victor Magron, and Jurij Vol{\v{c}}i{\v{c}}.
\newblock Optimization over trace polynomials.
\newblock In {\em Annales Henri Poincar{\'e}}, pages 1--34. Springer, 2021.

\bibitem[KVV14]{kaliuzhnyi2014foundations}
Dmitry~S Kaliuzhnyi-Verbovetskyi and Victor Vinnikov.
\newblock {\em Foundations of free noncommutative function theory}, volume 199.
\newblock American Mathematical Soc., 2014.

\bibitem[Las01]{Las01sos}
Jean-Bernard Lasserre.
\newblock Global optimization with polynomials and the problem of moments.
\newblock {\em SIAM J. Optim.}, 11(3):796--817, 2000/01.

\bibitem[Lau09]{Laurent:Survey}
Monique Laurent.
\newblock Sums of squares, moment matrices and optimization over polynomials.
\newblock In {\em Emerging applications of algebraic geometry}, volume 149 of
  {\em IMA Vol. Math. Appl.}, pages 157--270. Springer, New York, 2009.

\bibitem[LP19]{lasserre2019empirical}
Jean~B. Lasserre and Edouard Pauwels.
\newblock The empirical {C}hristoffel function with applications in data
  analysis.
\newblock {\em Advances in Computational Mathematics}, 45(3):1439--1468, 2019.

\bibitem[McC01]{McCullSOS}
Scott McCullough.
\newblock Factorization of operator-valued polynomials in several non-commuting
  variables.
\newblock {\em Linear Algebra Appl.}, 326(1-3):193--203, 2001.

\bibitem[MFH19]{invsdp}
V.~Magron, M.~Forets, and D.~Henrion.
\newblock Semidefinite approximations of invariant measures for polynomial
  systems.
\newblock {\em Discrete \& Continuous Dynamical Systems - B}, 24:6745, 2019.

\bibitem[MN80]{mate1980bernstein}
Attila M{\'a}t{\'e} and Paul~G Nevai.
\newblock Bernstein's inequality in {L}p for and (c, 1) bounds for orthogonal
  polynomials.
\newblock {\em Annals of Mathematics}, pages 145--154, 1980.

\bibitem[MNT91]{mate1991szego}
Attila M{\'a}t{\'e}, Paul Nevai, and Vilmos Totik.
\newblock Szego's extremum problem on the unit circle.
\newblock {\em Annals of Mathematics}, pages 433--453, 1991.

\bibitem[MPW{\etalchar{+}}19]{marx2019tractable}
Swann Marx, Edouard Pauwels, Tillmann Weisser, Didier Henrion, and Jean
  Lasserre.
\newblock Tractable semi-algebraic approximation using {C}hristoffel-{D}arboux
  kernel.
\newblock {\em arXiv preprint arXiv:1904.01833}, 2019.

\bibitem[MS17]{mingo2017free}
James~A Mingo and Roland Speicher.
\newblock {\em Free probability and random matrices}, volume~35.
\newblock Springer, 2017.

\bibitem[Nev86]{nevai1986geza}
Paul Nevai.
\newblock G{\'{e}}za {F}reud, orthogonal polynomials and {C}hristoffel
  functions. {A} case study.
\newblock {\em Journal of Approximation Theory}, 48(1):3--167, 1986.

\bibitem[NPA08]{navascues2008convergent}
Miguel Navascu{\'e}s, Stefano Pironio, and Antonio Ac{\'\i}n.
\newblock A convergent hierarchy of semidefinite programs characterizing the
  set of quantum correlations.
\newblock {\em New J. Phys.}, 10(7):073013, 2008.

\bibitem[Pau02]{Vern}
Vern Paulsen.
\newblock {\em Completely bounded maps and operator algebras.}, volume~78 of
  {\em Cambridge studies in advanced mathematics}.
\newblock Cambridge University Press, 2002.

\bibitem[PBV18]{pauwels2018relating}
Edouard Pauwels, Francis Bach, and Jean-Philippe Vert.
\newblock Relating leverage scores and density using regularized {C}hristoffel
  functions.
\newblock In {\em Advances in Neural Information Processing Systems}, pages
  1663--1672, 2018.

\bibitem[Pis03]{Pisier}
Gilles Pisier.
\newblock {\em Introduction to operator space theory}.
\newblock Cambridge University Press, 2003.

\bibitem[PL16]{pauwels2016sorting}
Edouard Pauwels and Jean~B Lasserre.
\newblock Sorting out typicality with the inverse moment matrix {SOS}
  polynomial.
\newblock In {\em Advances in Neural Information Processing Systems}, pages
  190--198, 2016.

\bibitem[PNA10]{pironio2010convergent}
Stefano Pironio, Miguel Navascu{\'e}s, and Antonio Ac{\'\i}n.
\newblock Convergent relaxations of polynomial optimization problems with
  noncommuting variables.
\newblock {\em SIAM J. Optim.}, 20(5):2157--2180, 2010.

\bibitem[PPL20]{pauwels2020data}
Edouard Pauwels, Mihai Putinar, and Jean-Bernard Lasserre.
\newblock Data analysis from empirical moments and the {C}hristoffel function.
\newblock {\em Foundations of Computational Mathematics}, pages 1--31, 2020.

\bibitem[Put93]{Putinar1993positive}
Mihai Putinar.
\newblock Positive polynomials on compact semi-algebraic sets.
\newblock {\em Indiana Univ. Math. J.}, 42(3):969--984, 1993.

\bibitem[Sim08]{simon2008christoffel}
Barry Simon.
\newblock The {C}hristoffel--{D}arboux kernel.
\newblock In {\em ``Perspectives in PDE, Harmonic Analysis and Applications,”
  a volume in honor of VG Maz’ya’s 70th birthday, Proceedings of Symposia
  in Pure Mathematics}, volume~79, pages 295--335, 2008.

\bibitem[ST97]{ST}
Edward~B. Saff and Vilmos Totik.
\newblock {\em Logarithmic Potentials with External Fields (with an appendix by
  {T}homas {B}loom)}.
\newblock Number 316 in Grundlehren der mathematischen Wissenschaften.
  Springer-Verlag Berlin Heidelberg, 1997.

\bibitem[Tak79]{Takesaki01}
Masamichi Takesaki.
\newblock {\em Theory of Operator Algebras {I}}.
\newblock Springer-Verlag, New York Inc., 1979.

\bibitem[Tim94]{Tim}
Aleksandr~Filipovich Timan.
\newblock {\em Theory of approximation of functions of a real variable}.
\newblock Dover Publications, Inc., 1994.

\bibitem[VBP19]{vu2019rate}
Mai~Trang Vu, Fran{\c{c}}ois Bachoc, and Edouard Pauwels.
\newblock Rate of convergence for geometric inference based on the empirical
  {C}hristoffel function.
\newblock {\em arXiv preprint arXiv:1910.14458}, 2019.

\bibitem[Voi85]{V1}
Dan Voiculescu.
\newblock Symmetries of some reduced free product $c^*$-algebras.
\newblock In {\em Operator {A}lgebras and {T}heir {C}onnections with {T}opology
  and {E}rgodic {T}heory}, volume 1132 of {\em Lecture {N}otes in {M}ath.}
  Springer, 1985.

\bibitem[Voi98]{V3}
Dan Voiculescu.
\newblock The analogues of entropy and of {F}isher’s information measure in
  free probability theory, {V}: {N}oncommutative {H}ilbert transforms.
\newblock {\em Invent. Math.}, 132:189--227, 1998.

\bibitem[Voi00]{V2}
Dan Voiculescu.
\newblock The {C}oalgebra of the {F}ree {D}ifference {Q}uotient and {F}ree
  {P}robability.
\newblock {\em International Mathematics Research Notices}, 2000(2):79--106,
  2000.

\bibitem[WM21]{wang2020exploiting}
Jie Wang and Victor Magron.
\newblock Exploiting term sparsity in noncommutative polynomial optimization.
\newblock {\em Computational Optimization and Applications}, 80(2):483--521,
  2021.

\end{thebibliography}
\newcommand{\etalchar}[1]{$^{#1}$}

\end{document}